\newcommand{\R}{\mathbb{R}}
\newcommand{\C}{\mathbb{C}}
\newcommand{\N}{\mathbb{N}}
\newcommand{\Z}{\mathbb{Z}}
\newcommand{\E}{\mathbb{E}}
\newcommand{\GL}{{\rm GL}}
\newcommand{\SL}{{\rm SL}}
\newcommand{\abs}[1]{\bigl| #1 \bigr|} 
\newcommand{\norm}[1]{{\left\lVert #1 \right\rVert}}
\newcommand{\ep}{\epsilon}
\newcommand{\dist}{{\rm dist}}
\newcommand{\Oscr}{\mathscr{O}}
\newcommand{\Lip}{\mathrm{Lip}}
\theoremstyle{plain}
\newtheorem{theorem}{Theorem}[section]
\newtheorem{proposition}{Proposition}[section]
\newtheorem{corollary}[proposition]{Corollary}
\newtheorem{lemma}[proposition]{Lemma}
\theoremstyle{definition}
\newtheorem{definition}{Definition}[section]
\theoremstyle{definition}
\newtheorem{remark}{Remark}[section]
\numberwithin{equation}{section}
\newcommand{\Pscr}{\mathcal{P}}
\newcommand{\Escr}{\mathcal{E}}
\newcommand{\Prob}{\mathrm{Prob}}
\newcommand{\Qop}{\mathcal{Q}}
\newcommand{\Pp}{\mathbb{P}}
\newcommand{\m}{m^-}
\newcommand{\Wsloc}{W^s_{\mathrm{loc}}}
\newcommand{\Wuloc}{W^u_{\mathrm{loc}}}
\newcommand{\muxminus}{\mu_{x^-}^u }
\newcommand{\muyminus}{\mu_{y^-}^u }	
\newcommand{\muxnminus}{\mu_{x_n^-}^u}	
\newcommand{\ind}{{\bf 1}}
\newcommand{\Xspace}{\mathfrak{X}}
\newcommand{\tilXsp}{\widetilde{\mathfrak{X}}}
\newcommand{\eu}{e_u}
\newcommand{\valfaSigma}{v_\alpha^X}
\def\real{\mathbb{R}}
\def\projective{\mathbb{P}}
\def\supp{\operatorname{supp}}
\def\dist{\operatorname{dist}}
\def\dim{\operatorname{dim}}
\def\id{\operatorname{id}}
\def\grass{\operatorname{Grass}}
\def\proj{\projective(\mathbb{R}^{d})}
\def\L1{L^1_{\mu}(M,N)}
\def\0{\hat{0}}
\def\SL{\operatorname{SL}(2,\real)}
\def\GL{\mathrm{GL}(d,\real)}
\newcommand{\lspan}{\mathrm{span}}
\title[H\"older continuity of the Lyapunov exponents]{H\"older continuity of the Lyapunov exponents of linear cocycles \\over hyperbolic maps}
\begin{document}

	\author[P. Duarte]{Pedro Duarte}
	\address{Departamento de Matem\'atica and CMAFcIO\\
		Faculdade de Ci\^encias\\
		Universidade de Lisboa\\
		Portugal 
	}
	\email{pmduarte@fc.ul.pt}
	
	\author[S. Klein]{Silvius Klein}
	\address{Departamento de Matem\'atica, Pontif\'icia Universidade Cat\'olica do Rio de Janeiro (PUC-Rio), Brazil}
	\email{silviusk@puc-rio.br}
	
	\author[M. Poletti]{Mauricio Poletti}
	\address{Universidade Federal do Cear\'a (UFC)}
	\email{mpoletti@mat.ufc.br}

\begin{abstract}   
Given a hyperbolic homeomorphism on a compact metric space, consider the space of linear cocycles over this base dynamics which are H\"older continuous and whose projective actions are partially hyperbolic dynamical systems. We prove that locally near any typical cocycle, the Lyapunov exponents are H\"older continuous functions relative to the uniform topology. This result is obtained as a consequence of a uniform large deviations type estimate in the space of cocycles. 
As a byproduct of our approach, we also establish other statistical properties for the iterates of such cocycles, namely a central limit theorem and a large deviations principle. 
\end{abstract}

	\maketitle


	\section{introduction}
	\label{intro}
	Let $M$ be a compact metric space with no isolated points and let $f \colon M\to M$ be a hyperbolic homeomorphism. Examples of such systems are Anosov diffeomorphisms on a compact manifold,  nontrivial hyperbolic attractors, horseshoes and Markov shifts. Moreover, Bowen~\cite{Bowen} showed that every hyperbolic homeomorphism is conjugated, via a H\"older continuous function to a topological Markov shift in a finite number of symbols.
	
	Given any H\"older continuous observable (referred to as a potential) on $M$, there exists an equilibrium state (which is unique, if we also assume the topological transitivity of the system), that is, there is an $f$-invariant Borel probability measure $\mu$ on $M$ which maximizes the pressure. Such measures, which are ergodic, correspond, via the semi-conjugation given by Bowen's theorem,  to measures that admit a local product structure with H\"older continuous density.   
	
	A linear cocycle over the base dynamical system $(M, f, \mu)$ is a skew product map 
	$$F_A \colon M \times \R^d \to M \times \R^d , \quad F_A (x, v) = \left(f (x), A(x) v\right) \, , $$
	where $A \colon M \to \GL$ is a H\"older continuous matrix valued function.
	
	The iterates of this new dynamical system are
	$$F_A^n (x, v) = \left( f^n (x), A^n (x) v \right) \, ,$$
	where
	$$A^n (x) := A (f^{n-1} x) \ldots A (f (x)) \, A (x) \, .$$

	A linear cocycle $F_A$  is determined by, and thus can be identified with the matrix valued, H\"older continuous function $A \colon M \to \GL$. We endow the space $ C^\alpha (M, \GL)$ of such functions with the uniform distance
	$$d(A,B):= \norm{A-B}_\infty + \norm{A^{-1}-B^{-1}}_\infty .$$
	
	By Furstenberg-Kesten's theorem, we have the following $\mu$-a.e. convergence of the maximal expansion of the iterates of the linear cocycle:
	\begin{equation} \label{F-K}
	\frac{1}{n} \, \log \norm{A^n (x)} \to L_1 (A) \, ,
	\end{equation}
	where the limit $L_1 (A)$ is called the maximal Lyapunov exponent of the cocycle $A$. 
	
	The other Lyapunov exponents are defined similarly: $L_2 (A)$ corresponds to the second largest expansion (or singular value) of the iterates of $A$ and so on, until $L_d (A)$.

	A linear cocycle $F_A$ induces the projective cocycle
	$$\hat{F}_A \colon M \times \proj \to M \times \proj , \quad F_A (x, \hat{v}) = \left(f (x), \widehat{A(x) v} \right) \, .$$
	
	We will assume that the projective cocycle $\hat{F}_A$ is a partially hyperbolic dynamical system, which is an open property  and corresponds, via the semi-conjugacy in Bowen's theorem, to the linear cocycle being fiber bunched (or nearly conformal). 
	
	We also assume that the cocycle $A$ is typical in the sense of Bonatti and Viana, which is an open and dense property and ensures the simplicity of the Lyapunov exponents.
	Precise definitions of this and other relevant concepts will be given in the next section.

	\medskip
	
	The iterates of a linear cocycle are multiplicative processes that generalize products of i.i.d. random matrices, which in turn represent multiplicative analogues of sums of i.i.d. scalar random variables. The study of their statistical properties, that is, of the convergence in~\eqref{F-K}, is an interesting problem in itself, with important consequences elsewhere.
	Limit theorems (e.g. a large deviations principle and a central limit theorem) were first obtained by E. Le Page~\cite{LP} for Bernoulli base dynamics and by P. Bougerol~\cite{Bou} for Markov type cocycles. Related results, in the same setting, were more recently established by P. Duarte and S. Klein, see~\cite{DK-Holder} and~\cite[Chapter 5]{DK-book}).
	
	For the case of linear cocycles over hyperbolic systems, in the same setting of this paper, S. Gou\"ezel and L. Stoyanov~\cite{GouS} obtained a large deviations principle, while K. Park and M. Piraino~\cite{park2020transfer} obtained a central limit theorem (and a large deviations principle).
	
	\smallskip
	
	We are interested in large deviations type (LDT) estimates that are finitary (non asymptotic), effective and uniform in the cocycle. Such estimates are also called concentration inequalities in probabilities.
	
	\begin{definition}
		Let $A \colon M \to \GL$ be a linear cocycle over an ergodic system $(M, f, \mu)$ as above. We say that $A$ satisfies an LDT estimate if there are constants $C=C(A)<\infty$ and $k=k(A)>0$   
		such that for all\, $0<\varepsilon<1$ and $n\in\N$, 
		$$  \mu \left\{ x\in M \colon \, \abs{\frac{1}{n}\,\log \norm{A^{n}(x)} - L_1(A) } > \varepsilon \,\right\} \leq C\,e^{ - {k\,\varepsilon^2 }\, n  } \;. $$
		
		We call such an estimate uniform if it holds for all cocycles in some neighborhood of $A$, with the same constants $C$ and $k$.
	\end{definition}

	The first result of this paper is the following.
	
	\begin{theorem}\label{LDT thm}
		Let $f \colon M\to M$ be a hyperbolic homeomorphism, let $\mu$ be an equilibrium state of a H\"older continuous potential and let $A\colon M \to \GL$ be a H\"older continuous linear cocycle. Assume that $A$ is typical and that the corresponding projective cocycle is a partially hyperbolic system. Then $A$ satisfies a uniform large deviations type estimate.
	\end{theorem}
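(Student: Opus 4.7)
The plan is to exploit the symbolic coding of the hyperbolic base dynamics and the fiber bunching of the cocycle in order to reduce the problem to the analysis of a transfer operator associated with the Gibbs state. By Bowen's theorem, $(M,f,\mu)$ is H\"older conjugate to a subshift of finite type equipped with a Gibbs state whose local product structure has H\"older continuous density. The partial hyperbolicity of $\hat F_A$ is equivalent to the cocycle being fiber bunched, which supplies stable and unstable holonomies $H^{s}_A, H^{u}_A$ that are H\"older continuous in $x$ and depend continuously on $A$. These holonomies are the tool that allows us to pass between the two-sided base and its one-sided quotient without distorting the Lyapunov exponents.

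First, using the unstable holonomies together with the local product structure of $\mu$, I would identify the projective cocycle with a Markov system over the one-sided shift by trivializing fibers along local unstable sets. Combining the Ruelle transfer operator for the Gibbs potential with the action of the cocycle on $\proj$ yields a Markov-type operator $\cQ_A$ acting on H\"older observables on the product of the one-sided shift with $\proj$. Under the Bonatti-Viana typicality assumption, I would prove that $\cQ_A$ admits a unique stationary measure $\eta_A$ and has a spectral gap on this H\"older space, namely $\cQ_A = \Pi_A + R_A$ with $\Pi_A$ of rank one and $\norm{R_A^n} \less \rho^n$ for some $\rho<1$. A Nagaev-Guivarc'h perturbation argument applied to the twisted operator $\cQ^z_A$ obtained by multiplying by $e^{z\,\log\norm{A\hat v}}$, differentiable in $z$ near $0$, followed by a Chernoff-Cram\'er optimization, then delivers the Gaussian type bound
$$ \mu\Bigl\{ x\in M \,:\, \babs{\tfrac{1}{n}\log\norm{A^n(x)} - L_1(A)} > \dev \Bigr\} \le C\, e^{-k\,\dev^2\,n} . $$

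The uniformity in the cocycle follows because the hypotheses of typicality and fiber bunching are both open conditions in $C^\alpha(M,\GL)$, the holonomies $H^{s/u}_B$ vary continuously with $B$, and consequently $\cQ_B$ depends continuously on $B$ as a bounded operator on the H\"older space; analytic perturbation theory of the spectrum then transports the spectral decomposition, and the constants $C,k$, to a full $C^\alpha$-neighborhood of $A$. I expect the main obstacle to be establishing a genuine spectral gap for $\cQ_A$ under the relatively weak Bonatti-Viana condition rather than under the stronger strong-irreducibility-plus-contraction hypotheses available in the classical i.i.d.\ setting: typicality only furnishes a finite-order pinching and twisting at some periodic point, and one must show that this suffices to quasi-compactify $\cQ_A$ in the chosen H\"older space and to rule out measurable invariant projective sections, in a way that remains stable under small $C^\alpha$ perturbations of the cocycle.
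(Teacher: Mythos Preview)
Your proposal is correct and follows essentially the same route as the paper: Bowen coding, holonomy reduction to a one-sided cocycle, a quasi-compact Markov operator on $C^\alpha(X^-\times\proj)$ (the paper uses the Markov operator of the projective SDS rather than the Ruelle transfer operator, a distinction it makes explicitly), and then a Nagaev--Guivarc'h/Chernoff argument from the spectral gap, with uniformity coming from continuity of the operator in the cocycle. One point to sharpen: the paper obtains the uniform LDT on a $C^0$-neighborhood of $A$ inside the space of $\alpha$-H\"older fiber-bunched cocycles, not merely a $C^\alpha$-neighborhood, and this stronger uniformity is what feeds into the H\"older continuity of the Lyapunov exponents.
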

	
	Ou approach is based upon the study of the spectral properties of the Markov (or transition) operator associated to the projective cocycle $\hat{F}_A$ and defined on an appropriate space of observables. In other related works, e.g.~\cite{park2020transfer}, it is the transfer operator that plays a similar r\^ole. Our method allows for a more quantitative control of various parameters, thus ensuring the uniformity of the LDT estimate above. 
	Moreover, as  a by-product of this approach and using an abstract CLT for stationary Markov processes due to Gordin and Lif\v{s}ic~\cite{Go69}, we establish the following central limit theorem in the setting of Theorem~\ref{LDT thm}.
	
	\begin{theorem}\label{clt intro}
		Given a cocycle $A$ as above, there exists $0<\sigma<\infty$ such that for every $v\in \R^d\setminus\{0\}$ and  $a\in\R$,
		$$
		\lim_{n\to \infty}  \mu\left\{ x\in M \colon  \frac{\log\norm{A^n(x) \, v }-n\, L_1(A)}{\sqrt{n}} \leq a \,  \right\} =  \frac{1}{\sigma\sqrt{2\pi}}\int_{-\infty}^a e^{-\frac{t^2}{2\sigma^2}}dt
		$$
	\end{theorem}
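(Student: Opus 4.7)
The starting point is the Birkhoff-sum representation
$$\log \norm{A^n(x)\,v} - n\, L_1(A) = \log\norm{v} + \sum_{k=0}^{n-1}\bigl[\varphi(\hat F_A^k(x, \hat v)) - L_1(A)\bigr],$$
where $\varphi \colon M \times \proj \to \R$ is the H\"older observable $\varphi(x, \hat v) := \log(\norm{A(x)v}/\norm{v})$, and $\hat\mu$ is the (unique, by typicality) $\hat F_A$-invariant lift of $\mu$ to $M \times \proj$, which satisfies $\int \varphi \, d\hat\mu = L_1(A)$. After dividing by $\sqrt n$, the term $\log\norm{v}/\sqrt n$ vanishes, so it suffices to establish a CLT for the centered Birkhoff sums of $\varphi$ along $\hat F_A$-orbits starting at $(x, \hat v)$ with $x$ distributed by $\mu$. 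The independence of the limit law from $v$ will emerge automatically from the Markov-operator analysis, as the initial projective direction gets equidistributed under $\hat\mu$ at a geometric rate.

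To obtain this CLT, I would recast the projective cocycle as a stationary Markov process and apply the Gordin-Lif\v{s}ic martingale approximation. Via Bowen's semi-conjugacy, $(M, f, \mu)$ is coded by a Markov shift whose measure has a local product structure; projecting onto the one-sided (unstable) factor turns the projective variable into a Markov chain whose transition operator $\Qop$ acts on a Banach space of H\"older continuous observables on the projective bundle. The crucial input is the spectral gap of $\Qop$, which is precisely the ingredient underlying the proof of Theorem~\ref{LDT thm}: $\Qop$ is quasi-compact, with $1$ a simple dominant eigenvalue strictly separated from the remainder of its spectrum.

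Given the spectral gap, the Poisson equation $\varphi - L_1(A) = \psi - \Qop \psi$ admits a H\"older solution $\psi = \sum_{k \geq 0} \Qop^k(\varphi - L_1(A))$, the series converging geometrically. This produces the Gordin-Lif\v{s}ic martingale-coboundary decomposition
$$\sum_{k=0}^{n-1}(\varphi - L_1(A))\circ \hat F_A^k = M_n + \psi - \psi \circ \hat F_A^n,$$
where $(M_n)$ is an $L^2$-martingale with stationary, bounded increments. The Billingsley-Ibragimov martingale CLT applied to $M_n/\sqrt n$, together with the fact that the coboundary term divided by $\sqrt n$ tends to $0$ in probability, then yields convergence in law to $\mathcal{N}(0, \sigma^2)$ with the standard Green-Kubo variance formula.

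The main obstacle is the strict positivity $\sigma > 0$, which is equivalent to showing that $\varphi - L_1(A)$ is not a measurable coboundary for $\hat F_A$. This is the classical Furstenberg-type non-degeneracy step: the typicality hypothesis of Bonatti-Viana supplies periodic orbits with pinching and twisting properties, and these force incompatible cohomological constraints along the corresponding invariant sections of the projective bundle, ruling out any coboundary solution. This is where the full strength of the typicality hypothesis enters, and it is the most delicate ingredient of the argument.
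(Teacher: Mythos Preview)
Your proposal is correct and follows essentially the same route as the paper: reduce to a one-sided Markov process on $X^-\times\proj$, use the spectral gap of the Markov operator $\Qop$ to solve the Poisson equation and apply the Gordin--Lif\v{s}ic martingale CLT, then exclude $\sigma=0$ via the pinching--twisting hypothesis. The paper makes the last step concrete by showing that $\sigma=0$ forces $\log\norm{A^n(x)\,p}-nL_1(A)$ to be uniformly bounded on $\supp(m)$, hence every periodic orbit there has top exponent exactly $L_1(A)$, and then explicitly builds (by shadowing a homoclinic excursion) a periodic point with strictly smaller exponent; your ``incompatible cohomological constraints'' is the right idea but would need to be fleshed out along these lines.
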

	
	Note that compared with the main result in~\cite{park2020transfer}, the positivity of the variance is an implicit conclusion rather than an additional hypothesis. 
	
	Furthermore, we also establish a large deviations principle.
	
	An important question in the theory of linear cocycles is the behavior of the Lyapunov exponents under small perturbations of the data. In particular, the continuity of the maximal Lyapunov exponent is considered a difficult problem in most settings. It has been studied successfully in the case of cocycles over Bernoulli systems by, amongst others, C. Bocker and M. Viana~\cite{BV}, E. Malheiro and M. Viana~\cite{Viana-M} and by L. Backes, A. Brown and C. Butler~\cite{BBB}.
	
	Furthermore, the study of finer continuity properties (e.g. H\"older continuity) of the Lyapunov exponents was initiated by Le Page~\cite{LePage} for the Bernoulli case and further extended to related settings, see for instance P. Duarte and S. Klein~\cite{DK-book, DK-Holder} and E. Tall and M. Viana~\cite{Tall-Viana}. 
	
	In~\cite[Chapter 2]{DK-book} it was established a relationship between the availability of {\em uniform} LDT estimates in any abstract space of cocycles and the H\"older continuity of the Lyapunov exponents. The main result of our paper is thus the following.
	
	\begin{theorem}\label{main A}
		Let $f \colon M \to M$ be a hyperbolic homeomorphism and let $\mu$ be an equilibrium state of a H\"older continuous potential. Consider the open set of typical H\"older continuous cocycles  $A\colon M \to \GL$ whose projective actions are partially hyperbolic. Then the Lyapunov exponents are locally H\"older continuous functions of the cocycle.
	\end{theorem}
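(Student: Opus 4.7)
The plan is to derive the theorem from the uniform LDT estimate of Theorem~\ref{LDT thm} via the abstract H\"older continuity mechanism of~\cite[Chapter~2]{DK-book}, and then to extend the conclusion from the top exponent to the full spectrum by an exterior power argument.

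\emph{First}, I would verify that the set of cocycles satisfying the standing hypotheses is open in $(C^\alpha(M,\GL),d)$: partial hyperbolicity of the projective action is open by structural stability, and Bonatti--Viana typicality is open by definition. Thus around any admissible $A$ one obtains a neighborhood $\mathcal{U}$ on which Theorem~\ref{LDT thm} supplies a single pair of constants $(C,k)$ controlling the LDT estimate for every $B\in\mathcal{U}$.

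\emph{Second}, I would invoke the abstract theorem of~\cite[Chapter~2]{DK-book}: given a uniform LDT with Gaussian-type decay $e^{-k\varepsilon^2 n}$ together with the classical Avalanche Principle for long products in $\GL$, the map $B\mapsto L_1(B)$ is H\"older continuous on $\mathcal{U}$. The mechanism compares $L_1(A)$ and $L_1(B)$ through the finite-scale quantities $\tfrac{1}{n}\log\norm{A^n(x)}$ and $\tfrac{1}{n}\log\norm{B^n(x)}$: the LDT estimate places them within $\varepsilon$ of their respective limits on a set of measure at least $1-C e^{-k\varepsilon^2 n}$, the Avalanche Principle upgrades the $C^0$-closeness of $A$ and $B$ into closeness of these finite averages on a large-measure set, and an optimal choice of scale $n = n(d(A,B))$ yields the H\"older exponent.

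\emph{Third}, to handle $L_2,\dots,L_d$ I would apply the same argument to the exterior power cocycles $\Lambda^k A$. By typicality the Lyapunov spectrum of $A$ is simple, so $L_1(\Lambda^k A) = L_1(A) + \cdots + L_k(A)$, and each individual $L_k$ is then recovered by telescoping. What must be checked is that $\Lambda^k A$ again falls within the scope of Theorem~\ref{LDT thm}: H\"older regularity passes to exterior powers, typicality and simplicity are preserved under wedge, and the spectral gap $L_k(A) > L_{k+1}(A)$ ensures partial hyperbolicity of the projective action of $\Lambda^k A$ along its top singular direction.

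The step I expect to be most delicate is the uniformity of this last verification. Specifically, I need the constants produced by Theorem~\ref{LDT thm} when applied to $\Lambda^k B$ to remain stable as $B$ varies in a neighborhood of $A$, which amounts to a quantitative lower bound on the spectral gaps $L_k(B) - L_{k+1}(B)$---a priori only continuous---in terms of the corresponding gaps for $A$, together with the preservation of typicality under small perturbations of $\Lambda^k A$. Once this uniformity is secured, the H\"older continuity of every $L_k$ follows without further work.
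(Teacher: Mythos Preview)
Your overall strategy---uniform LDT plus the abstract continuity theorem of \cite{DK-book}, then exterior powers---is correct and is exactly what the paper does (see the proof of Theorem~\ref{LE continuity}, which feeds both the base LDT of Theorem~\ref{base ldt} and the fiber LDT of Theorem~\ref{fiber LDT Sigma-} into the ACT \cite[Theorem~3.1]{DK-book}; you omit the base LDT, which is a genuine hypothesis of the ACT, though it is classical for equilibrium states over hyperbolic maps).

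There is, however, a real gap in your third step. Your claim that ``the spectral gap $L_k(A)>L_{k+1}(A)$ ensures partial hyperbolicity of the projective action of $\Lambda^k A$'' is false. Partial hyperbolicity of the projective action is the fiber bunching condition $\|A^N\|\,\|(A^N)^{-1}\|<2^{N\alpha}$, and passing to the $k$-th exterior power yields only $\|\Lambda^k A^N\|\,\|(\Lambda^k A^N)^{-1}\|\le (\|A^N\|\,\|(A^N)^{-1}\|)^k<2^{kN\alpha}$, which is useless for $k\ge 2$. The Lyapunov gap produces a dominated splitting of the bundle, an entirely different notion; it does not recover fiber bunching. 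The correct fix, implicit in the paper, is that what the LDT machinery actually consumes is not fiber bunching per se but the existence of $\beta$-H\"older stable and unstable holonomies (Theorem~\ref{t.Holder.holonomies}, Proposition~\ref{prop.reduction}), and these are trivially inherited: the holonomies of $\Lambda^k A$ are $\Lambda^k H^{s}$ and $\Lambda^k H^{u}$, which are $\beta$-H\"older because $H^{s}$ and $H^{u}$ are. With that substitution your exterior-power argument goes through. Relatedly, the uniformity you flag as delicate does not hinge on quantitative lower bounds for $L_k(B)-L_{k+1}(B)$ (which would be circular); it comes instead from the continuous dependence on $A$ of the contraction rate $\kappa_\alpha(A^n,\mu)$ for fixed $n$ (see the remark following Proposition~\ref{kappa alpha}), together with the openness of the pinching and twisting data.
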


	An important class of examples of linear cocycles are the Schr\"odinger cocycles, whose iterates represent the transfer matrices used to formally solve a discrete Schr\"odinger equation. More precisely, let $v \colon M \to \R$ be a potential function and let $\lambda > 0$ be a coupling constant. Given a base point $x \in M$,  the discrete Schr\"odinger operator $H_\lambda (x)$  acts on $l^2 (\Z)$ as follows: if  $\psi = \{\psi_n\}_{n\in\Z} \in l^2 (\Z)$, then
	$$\big[ H_\lambda (x) \psi \big]_n  := - (\psi_{n+1} + \psi_{n-1}) + \lambda \, v (f^n x) \psi_n \quad \text{ for all } n \in \Z \, .$$
	
	The associated  Schr\"odinger equation, $H_\lambda (x) \psi = E \psi$, where $E\in \R$, is a second order finite differences equation, which is solved recursively by iterating the cocycle 
	$$S_{E, \lambda} \colon M \to \SL  , \quad S_{E, \lambda} (x)  := \left[ \begin{array}{ccc} 
	\lambda v (x)  - E  & &  -1  \\
	1 & &  \phantom{-}0 \\  \end{array} \right] \, .$$
	
	Assuming that the base dynamics is an Anosov linear toral automorphism, for a smooth potential function $v$ and for small enough coupling constants $\lambda$, V. Chulaevski and T. Spencer~\cite{CS95} proved the positivity of the Lyapunov exponent of the cocycle $(f, S_{E, \lambda})$ for all energies, while J. Bourgain and W. Schlag~\cite{BS00} proved that the corresponding Schr\"odinger operator satisfies Anderson localization.
	
	In the (more general) setting of this paper we obtain the following. 
	
	\begin{theorem}\label{MP intro}  Assume that the base dynamics $(M, f, \mu)$ is given by a topologically mixing, uniformly hyperbolic $C^1$ diffeomorphism and that $v \colon M \to \R$ is a H\"older continuous potential function with  $\int v\, d\mu \neq 0$. 
		
		Given $\delta>0$ 
		there exists $\lambda_0>0$ such that for all $0<\abs{\lambda}<\lambda_0$ and $E\in\R$ with $\abs{E}<2-\delta$,  the cocycle
		$(f, S_{E,\lambda})$ has positive Lyapunov exponent.
		
		Moreover, the maximal Lyapunov exponent is a H\"older continuous function of the parameters $E$ and $\lambda$.
		
		Furthermore, the statistical properties derived above (uniform LDT estimates, CLT) also hold for the iterates of this cocycle.
	\end{theorem}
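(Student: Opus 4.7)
The plan is to reduce Theorem~\ref{MP intro} to the already-established Theorems~\ref{LDT thm} and~\ref{main A}. Concretely, for $|E|\leq 2-\delta$ and $0<|\lambda|<\lambda_0$ with $\lambda_0$ small, I would verify that the Schrödinger cocycle $S_{E,\lambda}$ satisfies the two structural hypotheses of those results: (a) its projective action is partially hyperbolic (equivalently, $S_{E,\lambda}$ is fiber bunched), and (b) $S_{E,\lambda}$ is typical in the Bonatti--Viana sense. Once (a) and (b) are checked, Theorem~\ref{LDT thm} yields the uniform LDT estimate (and hence the CLT and LDP), while Theorem~\ref{main A} gives local Hölder continuity of the Lyapunov exponents. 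Positivity $L_1(S_{E,\lambda})>0$ is then free, because $S_{E,\lambda}\in\SL$ forces $L_1+L_2=0$, so the simplicity of the spectrum furnished by typicality forces $L_1>0$. Since (a) and (b) are open conditions on the cocycle and $(E,\lambda)\mapsto S_{E,\lambda}$ is real analytic into $C^\alpha(M,\SL)$, the conclusions transfer from regularity in the cocycle to joint regularity in $(E,\lambda)$.

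For (a): the constant matrix $S_{E,0}$ is elliptic for $|E|<2$, conjugate to a rotation by the angle $\theta(E)$ with $2\cos\theta(E)=-E$, and on the compact range $|E|\leq 2-\delta$ the conjugating matrix has norm bounded by some $C(\delta)$. Hence $\|S_{E,0}^n\|$ is bounded uniformly in $n$ and in $E$. Since $S_{E,\lambda}$ differs from $S_{E,0}$ by at most $|\lambda|\|v\|_\infty$ in operator norm, and since the uniformly hyperbolic $f$ has contraction/expansion rates on $E^s\oplus E^u$ bounded away from $1$, the fiber-bunching inequality (relating $\|S_{E,\lambda}\|^2$ to these rates raised to the Hölder exponent $\alpha$ of $v$) holds after shrinking $\lambda_0$, giving partial hyperbolicity of the projective action.

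The crux is (b), and this is where the hard part lies. At $\lambda=0$ every periodic product is a rotation, so both pinching and twisting fail trivially; they must be created by the perturbation. To produce pinching at small $\lambda\neq 0$, I would invoke Bowen's specification/shadowing property for the topologically mixing uniformly hyperbolic base to produce periodic points $p$ of large period $n$ whose Birkhoff averages $\tfrac{1}{n}\sum_{k=0}^{n-1}v(f^k p)$ approximate $\int v\,d\mu\neq 0$ arbitrarily well. A Figotin--Pastur type expansion, carried out after complex-diagonalizing $S_{E,0}$, then shows that for such $p$ and for $\lambda$ small but nonzero the trace $\mathrm{tr}\,S_{E,\lambda}^n(p)$ is pushed outside $[-2,2]$ away from a lower-dimensional set of resonant $(E,\lambda)$, so the product matrix becomes hyperbolic with two distinct real eigenvalues — this is precisely the pinching condition. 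Twisting is then obtained in the same spirit: topological mixing supplies many heteroclinic connections to $p$, and a perturbative computation of the stable/unstable holonomies of the projective cocycle shows that at least one such connection moves the eigendirections of $S_{E,\lambda}^n(p)$ off each other, which is the twisting condition. The delicate point throughout is to control the perturbative expansion so that the contribution driven by the Birkhoff average is genuinely nondegenerate outside a small set of resonances in $E$ and in the combinatorics of the chosen orbits — this is precisely where the hypothesis $\int v\,d\mu\neq 0$ is essential, and is the main obstacle of the proof.
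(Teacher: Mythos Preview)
Your overall reduction strategy and your treatment of fiber bunching and pinching are essentially what the paper does: conjugate $S_{E,0}$ to a rotation $R_\kappa$, expand $\tilde S_{E,\lambda}^n$ to first order in $\lambda$ via the Pastur--Figotin formalism to obtain a trace formula $\mathrm{tr}\,\tilde S_{E,\lambda}^n(q)=2\cos(n\kappa)+\lambda\sin(n\kappa)\sum_{j=1}^n V_j+\mathscr{O}(\lambda^2)$, and feed in periodic points $q$ of large period produced by shadowing whose Birkhoff averages approximate $\int v\,d\mu\neq 0$, forcing the trace outside $[-2,2]$.

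The gap is in twisting. Your proposal to obtain it by ``a perturbative computation of the stable/unstable holonomies'' is where the argument becomes hand-waving, and it is not clear it can be completed that way. At $\lambda=0$ the cocycle is constant, all holonomies are the identity, and every transition map is a pure rotation; meanwhile the hyperbolic periodic matrix whose eigendirections you must twist only appears at order $\lambda$ and at very large period, so its eigenframe is itself a delicate perturbative object. Tracking both the eigenframe and the transition map to the precision needed to rule out the two forbidden incidences is not straightforward. The paper bypasses this entirely with a different mechanism: it keeps, alongside the newly created \emph{hyperbolic} periodic point, a second periodic point $p'$ (the original fixed point, or a low-period replacement) at which $S_{E,\lambda}^{m}(p')$ remains \emph{elliptic} with eigenvalues that are not roots of unity of orders $1,2,3,4,6,8$. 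A monodromy lemma then shows that transition maps along homoclinic loops through $p$ can be approximated by $\Psi_1\,A(p')^m\,\Psi_0$ for arbitrary $m$; since $\hat A(p')$ acts as a rotation of projective period $\geq 5$ (or aperiodic), a pigeonhole over $m\in\{1,\dots,5\}$ guarantees some $m$ with $\{\hat e_1^\sharp,\hat e_2^\sharp\}\cap\hat A(p')^m\{\hat e_1^\ast,\hat e_2^\ast\}=\emptyset$, which is exactly twisting. This ``elliptic--hyperbolic criterion'' is the idea you are missing.

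A second point: you flag ``resonances in $E$'' as an obstacle but do not resolve it, whereas the statement requires the conclusion for \emph{all} $|E|<2-\delta$. In the paper this manifests as the requirement that the elliptic matrix have eigenvalues avoiding low-order roots of unity; when $E=2\cos\kappa$ hits one of the finitely many bad values $\kappa\in\{\tfrac{\pi}{6},\tfrac{\pi}{4},\tfrac{\pi}{3},\tfrac{\pi}{2},\dots\}$, one replaces the fixed point by a suitable periodic point (again found via shadowing and the trace formula) whose associated matrix has trace strictly between $\sqrt{3}$ and $2-\delta$, restoring the needed ellipticity with good rotation number.
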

	
	We remark that positivity and H\"older continuity  of the Lyapunov exponents together with uniform LDT estimates for such cocycles are often used as main ingredients in establishing Anderson localization of the corresponding Schr\"odinger operator (see for instance J. Bourgain and M. Goldstein~\cite{B-G-first},  J. Bourgain and W. Schlag~\cite{BS00}, S. Jitomirskaya and X. Zhu~\cite{Jit-AL-random} or the survey of D. Damanik~\cite{David-survey}).  This topic, in the setting of this paper,  will be considered in a separate project.
	
	We would like to note that, according to an old private conversation between the second author and D. Damanik,  the Schr\"odinger operator with potential defined over similar types of base dynamics as in this paper has also been studied by A. Avila and D. Damanik. We are not aware of the current status of their project, nor of the methods employed, although we expect them to be different from ours.

	\smallskip
	
	The paper is organized as follows. In Section~\ref{intro} we formally define the main concepts used in this paper and provide the precise formulation of the main result, the continuity Theorem~\ref{main A}.  In Sections~\ref{holonomy} and~\ref{quasicompact} we introduce some technical tools such as the holonomy reduction, the Markov operator (and its quasi-compactness property) and stationary measures. In Sections~\ref{basemixing} and~\ref{fibermixing} we establish the strong mixing of the Markov operator associated to the base dynamics and respectively to the projective fiber dynamics. Based on this, in Section~\ref{continuity} we prove the uniform LDT estimate in Theorem~\ref{LDT thm} and as a consequence of an abstract continuity result, we derive the main Theorem~\ref{main A}. Moreover, as explained in Remark~\ref{LDP}, the argument can be adapted to also obtain a large deviations principle. Furthermore, the strong mixing of the Markov operator corresponding to the fiber dynamics together with an abstract central limit theorem are used in Section~\ref{clt section} to derive Theorem~\ref{clt intro}. Finally, in Section~\ref{example} we apply these results to Schr\"odinger cocycles and establish Theorem~\ref{MP intro}.
	
	\smallskip
	

	\section{Main concepts}
	\label{statements}
	 
	
	In this section we formally introduce the concepts mentioned in the introduction and formulate the results which imply  Theorem~\ref{main A}.
	
	\subsection{ Base dynamics }  
	
	Let $M$ be a compact metric space with no isolated points and let $f \colon M \to M$ be a homeomorphism. For a point $x\in M$ and for $\ep > 0$ small we define the local $\ep$-stable and respectively $\ep$-unstable sets of $x$ by
	\begin{align*}
	W^{s}_{\ep}\left(x\right) & :=\left\{ x \colon d \left( f^k(y),f^k(x) \right) <\epsilon \, \text{ for all }k\geq 0\right\} \text{ and }\\
	W^{u}_{\ep}\left(x\right) & :=\left\{ x \colon  d \left(f^k(x),f^k(y) \right) <\epsilon \, \text{ for all }k\leq 0\right\} .
	\end{align*}

	Following M.Viana~\cite{Viana2008}, we call a homeomorphism $f$  {\em (uniformly) hyperbolic} if there are constants $C<\infty$, $\ep>0$, $\tau > 0$ and $\lambda \in (0,1)$ such that for all $x\in M$ we have
	\begin{enumerate}
		\item $d \left( f^n(y_1),f^n(y_2) \right) \le C \, \lambda^n d(y_1,y_2)$ \, for all
		$y_1, y_2 \in W^s_{\ep} (x)$ and $n \ge 0$;
		\item $d \left (f^{-n} (y_1), f^{-n}(y_2) \right) \le C \, \lambda^{n} d (y_1,y_2)$ \, for all
		$y_1, y_2 \in W^u_{\ep} (x)$ and $n \ge 0$;
		\item if $d (x, y)\le\tau$, then $W^u_{\ep}(x)$ and $W^s_{\ep}(y)$ intersect in a
		unique point, which is denoted by $[x, y]$ and which depends continuously on $(x, y)$.
	\end{enumerate}
	
	For this $\ep$, the sets $\Wsloc (x) := W^s_{\ep} (x)$ and $\Wuloc (x) := W^u_{\ep} (x)$ are referred to simply as local stable and unstable sets of $f$.
	
	See also J. Ombach~\cite{Ombach} for other characterizations of hyperbolicity.

	Typical examples of such dynamical systems are Anosov diffeomorphisms, Markov shifts, non trivial hyperbolic attractors and horseshoes.
	
	Recall that given a H\"older continuous potential $\varphi \colon M \to \R$, there is a corresponding equilibrium state measure on $M$, which is unique if $f$ is topologically transitive. 
	
	It was essentially established by R. Bowen~\cite{Bowen} (see also V. Baladi~\cite{Baladi91} and the review by V. Alekseev and M. Yakobson~\cite{AY81} for more general settings that include ours) that uniformly hyperbolic homeomorphisms are semi-conjugated to a Markov shift. This means that for every uniformly hyperbolic homeomorphism $f \colon M\to M$  there exists a topological Markov shift $T \colon X \to X$ and a H\"older continuous map $\pi \colon X\to M$ such that $\pi\circ T=f\circ \pi$. Moreover, we can restrict $\pi$ so that it becomes a homeomorphism on a set that has total measure  for every $T$-invariant measure $\mu$. Furthermore, we have a one to one correspondence between the equilibrium states of the Markov shift $(X, T)$ and those of $(M, f)$.
	
	Let us then consider a topological Markov shift $(X, T)$, where the phase space $X =\{1,\ldots, \ell\}^{\Z}$, the transformation $T \colon X\to X$ is the left  shift homeomorphism 
	$T x = T \{x_i\}_{i\in \Z}:=\{x_{i+1}\}_{i\in \Z}$ and  the distance on $X$ is given by
	$$d(x, y) := 2^{-\min\{i \colon x_i\neq y_i\text{ or }x_{-i}\neq y_{-i}\}} \, .$$
	
	Assume that  $\mu\in \Prob(X)$ is a $T$-invariant measure.
	Given the symbols $a_0,\ldots, a_k\in\{1,\ldots, \ell\}$ consider the corresponding  
	cylinder 
	$$ [a_0, \ldots, a_k]:=\{ x\in X\colon x_i=a_i\, \text{ for }\, 0\leq i\leq k  \}. $$
	
	Define  $X^-:=\{1,\ldots, \ell\}^{-\N\cup \{0\}}$ and $X^+:=\{1,\ldots, \ell\}^{\N}$, where $\N:=\{1,2,3,\ldots\}$. 
	Let $T_+:X^+\to X^+$
	be the non-invertible left shift and $T_-:X^-\to X^-$ be the non-invertible right shift.
	Denote by $P_\pm \colon X\to X^\pm$ the corresponding canonical projections.
	Given $x\in X$, define the \textit{local stable set}
	$$\Wsloc (x):=\{ y\in X\colon y_n=x_n\, \text{ for all } \, n\geq 0 \} , $$
	and  the \textit{local unstable set} 
	$$\Wuloc (x):=\{ y\in X\colon y_n=x_n\, \text{ for all } \, n\leq  0 \} . $$
	Note that
	$ \Wuloc(x)= P_-^{-1}(P_-(x))$ for all $x\in X$ 
	and likewise $\Wsloc (x)= P_+^{-1}(P_+(x))\cap [x_0]$ for all $x\in X$. Hence
	\begin{align*}
	&  \Wuloc (x)=\Wuloc (y) \quad  \Leftrightarrow\quad   P_-(x)=P_-(y) .\\
	& \Wsloc (x)=\Wsloc (y) \quad  \Leftrightarrow\quad   x_0=y_0\, \text{ and }\, P_+(x)=P_+(y) .
	\end{align*} 
	Therefore, locally inside a cylinder $[i]$ of size $1$,  we can make the identifications 
	\begin{align*}
	[i]\cap X^-  & \equiv X/\Wuloc \equiv \Wsloc (q) \\
	X^+  & \equiv X/\Wsloc \equiv \Wuloc (q)
	\end{align*}
	where the right-hand sides represent the local stable and unstable sets of
	a reference point $q\in [i]$.
	
	Denote by $\mu_{[i]}$ the restriction of the measure $\mu$ to the cylinder $[i]$. 
	Define $\mu^+_i:= (P_+)_\ast \mu_{[i]}$, which is a measure on $X^+= \Wuloc (q)$ and $\mu^-_i:= (P_-)_\ast \mu_{[i]}$, which is a measure on $[i]\cap X^- = \Wsloc (q)$. Consider the lipeomorphism (by-Lipschitz homeomorphism)
	$$ h \colon [i]\to \Wsloc (q)\times \Wuloc (q)\equiv [i]\cap X^-\times X^+$$ defined by $h(x):=(P_-(x),P_+(x))$.

	\begin{definition}
		\label{def local product measure}
		We say that $\mu$ has local product structure with H\"older density if there exists  $\rho:X^-\times X^+ \to (0,+\infty)$
		a H\"older continuous function  such that  for each $i\in \{1,\ldots, \ell\} $, \, 
		$h_\ast \mu_{[i]} =  \rho\vert_{[i]}\, (\mu^-_i\times \mu^+_i)$.
	\end{definition}

	\begin{remark}
		By redefining the symbols, we note that it is sufficient for the product structure to occur at a smaller scale, corresponding to cylinders with length $>1$. 
	\end{remark}

	As explained below, examples of such measures are the equilibrium states of H\"older continuous potentials.

	Let $\mu$ be a $T$-invariant measure on $X$. Then  $\mu^+ :=(P_+)_*\mu$ is a $T_+$-invariant measure on $X^+$. Let $J_{\mu^+}T_+$ be the Jacobian of $T_+$.
	
	\begin{theorem}\label{t.holder.prod.meas}
		If $J_{\mu^+}T_+>0$ and it is H\"older continuous, then $\mu$ has local product structure with H\"older continuous density.
	\end{theorem}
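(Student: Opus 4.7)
The plan is to argue via the classical identification between $T$-invariant measures whose forward projection has H\"older Jacobian, and equilibrium (Gibbs) states of H\"older continuous potentials on the two-sided shift, for which the local product structure with H\"older density is a well-known consequence of Sinai--Ruelle--Bowen theory.

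Concretely, set $\phi := -\log J_{\mu^+} T_+$, which is H\"older continuous on $X^+$ by hypothesis. The Jacobian identity combined with $T_+$-invariance of $\mu^+$ says that $\mu^+$ is conformal for $T_+$ with weight $e^{-\phi}$; after subtracting the topological pressure of $\phi$ (a harmless constant that may be absorbed in a H\"older coboundary), the Ruelle--Perron--Frobenius theorem identifies $\mu^+$ as the unique equilibrium state on $X^+$ of the H\"older potential $\phi$. Let $\tilde{\mu}$ denote the unique equilibrium state on $X$ for the H\"older potential $\tilde{\phi} := \phi \circ P_+$. Then $(P_+)_\ast \tilde{\mu}$ is a $T_+$-invariant equilibrium state of $\phi$, hence equals $\mu^+$. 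On the other hand, any $T$-invariant probability on $X$ is determined by its $P_+$-projection, since the cylinders $\{T^{-k}[a_0, \ldots, a_n] : k \geq 0\}$ generate the Borel $\sigma$-algebra of $X$ and their measures are fixed by $\mu^+$ together with $T$-invariance. Therefore $\mu = \tilde{\mu}$, so $\mu$ is the equilibrium state on the two-sided shift of a H\"older continuous potential, and the classical Sinai--Ruelle--Bowen theorem yields the local product structure with H\"older continuous density.

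If one prefers a self-contained argument that avoids invoking the black box, the density $\rho$ can be constructed directly as an infinite product of Jacobian ratios,
\[
\log \rho(y^-, y^+) \;=\; \sum_{n=0}^{\infty} \Bigl[ \log J_{\mu^+} T_+\bigl( T_+^n P_+(y^-, y^+) \bigr) - \log J_{\mu^+} T_+\bigl( T_+^n P_+(q^-, y^+) \bigr) \Bigr] \;+\; (\text{symmetric past term}),
\]
where $q$ is a reference point in the cylinder $[i]$. The main obstacle is then the telescoping estimate: one must show that the H\"older continuity of $\log J_{\mu^+} T_+$ combined with the exponential decay of the symbolic distance between the two arguments (once, after forward iteration, the negative coordinates become irrelevant) yields a geometrically convergent series, with uniform H\"older constants across cylinders. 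This bookkeeping is standard but requires care in choosing compatible reference points $q$ for different cylinders $[i]$, in order to glue the local densities into a single globally defined H\"older continuous function $\rho$ on $X^- \times X^+$.
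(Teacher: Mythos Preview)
Your first argument---identifying $\mu$ as the equilibrium state of $\tilde\phi=-\log J_{\mu^+}T_+\circ P_+$ and then invoking Bowen's theory---is logically sound. The key step, that a $T_+$-invariant probability whose Jacobian is positive and H\"older is automatically the equilibrium state of $-\log J_{\mu^+}T_+$, works because such a measure is simultaneously conformal (eigenmeasure of $\mathcal L_\phi^\ast$) and invariant, which forces the Ruelle eigenfunction to be constant. Your claim that a $T$-invariant measure is determined by its $P_+$-projection is also correct. However, this route is essentially the converse of what the paper intends: the remark immediately following the theorem explains that the purpose of the result is precisely to give a direct proof that equilibrium states have H\"older local product structure, so your argument defers to the very black box the paper wants to open.

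Your second argument is much closer to the paper's own proof, but the displayed formula is wrong. Since $P_+(y^-,y^+)=y^+=P_+(q^-,y^+)$, every term in your forward sum vanishes identically; whatever content you intend is hidden in the unwritten ``symmetric past term''. The paper's construction uses \emph{backward} iterates: one disintegrates $\mu$ along the fibers $P_+^{-1}(x^+)$ and computes the Jacobian of the stable holonomy $h_{x^+,y^+}$ as
\[
Jh_{x^+,y^+}(x)=\lim_{n\to\infty}\frac{J_{\mu^+}T_+\bigl(P_+(T^{-n}x)\bigr)}{J_{\mu^+}T_+\bigl(P_+(T^{-n}y)\bigr)},
\]
which does depend on both past and future coordinates of $x$. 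The paper then makes a clean observation you are missing: this expression is exactly the stable holonomy of the one-dimensional multiplicative cocycle $x^+\mapsto J_{\mu^+}T_+(x^+)$, so its H\"older continuity is an immediate consequence of the general H\"older-holonomy theorem proved later for fiber-bunched linear cocycles. This replaces your ``standard but careful bookkeeping'' by a single structural remark.
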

	\begin{proof}
		Given $x^+$ and $y^+$ in the same cylinder, define $h_{x^+,y^+} \colon P_+^{-1}(x^+)\to P_+^{-1}(y^+)$ by 
		$h_{x^+,y^+}(x)=y$, where $y_n=y^+_n$ for $n>0$ and $y_n=x_n$ for $n\leq 0$. 
		
		Using Rokhlin's disintegration theorem, there exists a disintegration $x^+\mapsto \mu^s_{x^+}$ of $\mu$ relative to the partition $\{P_+^{-1}(x^+),x^+\in X^+\}$. By~\cite[Lemma~2.6]{BoV04}, the measure $(h_{x^+,y^+})_* \mu^s_{x^+}$ is absolutely continuous with respect to $\mu^s_{y^+}$. Moreover by \cite[Lemma~2.4]{BoV04} the \textcolor{blue}corresponding Jacobian $J h_{x^+,y^+}$, where $ (h_{x^+,y^+})_* \mu^s_{x^+}   = Jh_{x^+,y^+} \, \mu^s_{y^+} $,  
		is given by $$Jh_{x^+,y^+}(x) = \lim_{n\to\infty}\frac{J_{\mu^+}T_+(x_n)}{J_{\mu^+}T_+(y_n)} \, ,$$   where $x_n=P^+(T^{-n}(x))$.
		
		Given a cylinder $[i]$, fix $x^+\in [i]$, we can write $\mu_i=\rho \, \mu^s_{x^+}\times \mu^+$ with $\rho(y^+,z^-)=Jh_{x^+,y^+}(z)$, where $z_n=y^+_n$ if $n>0$ and $z_n=z^-_n$ if $n\leq 0$. So we are left to prove that $Jh_{x^+,y^+}(x)$ is H\"older continuous.
		
		To see this observe that $Jh_{x^+,y^+}(x)=\lim_{n\to\infty}\frac{J_{\mu^+}T_+(x_n)}{J_{\mu^+}T_+(y_n)}$, then we can see $Jh$ as the holonomy of the linear cocycle $J_{\mu^+}T_+(x^+)$ taking values on $\real$, then by Theorem~\ref{t.Holder.holonomies} below, $Jh_{x^+,y^+}$ varies H\"older continuously  with respect to $(x^+, y^+)$.
	\end{proof}

	\begin{remark}
		The above result  shows that measures with H\"older continuous Jacobian for the one sided shift $T^+$ have local product structure. An equilibrium state  for $T$ is the lift of an equilibrium state for $T^+$ and they have H\"older continuous Jacobian if the potential is H\"older continuous, see~\cite{Bowen}. So equilibrium states of H\"older continuous potentials satisfy our hypothesis.
	\end{remark}

	\subsection{ Fiber dynamics }

	Let $A \colon X \to \GL$ be a linear cocycle over the shift $T$.  Given $\beta>0$ we say that $A$ admits \emph{$\beta$-H\"older stable holonomies} if there exist $C < \infty$ and a family of linear maps $\left\{ H^s_{x, y}\in \GL \colon x \in X, y \in \Wsloc (x) \right\}$  with the following properties: 
	\begin{enumerate}
		\item $H^s_{x, x}=\id$ and $H^s_{x, y}\circ H^s_{y, z}=H^s_{x, z} $ for $x\in X$, $y, z \in \Wsloc (x)$,
		\item $A(y) \circ H^s_{x, y}=H^s_{T(x),T(y)} \circ A(x) $ for $x \in X$, $y \in \Wsloc (x)$,
		\item $\norm{H^s_{x, y}-\id}\le C \dist(x, y)^\beta $, 
		\item $\norm{H^s_{x, y}-H^s_{x', y'}}\leq C \left( d (x, x')^\beta + d (y, y')^\beta \right)$ for $y \in  \Wsloc (x)$ and $y'\in  \Wsloc (x')$.
	\end{enumerate}

	Moreover, we say that $A$ admits \emph{$\beta$-H\"older unstable holonomies} if the inverse cocycle admits $\beta$-H\"older stable holonomies.
	
	We say that an $\alpha$-H\"older linear cocycle $A:X\to \GL$ is \emph{fiber bunched} if there exists $N \in \N$, such that 
	$$\norm{A^N(x)}\norm{A^N(x)^{-1}}2^{-N\alpha}<1 \quad \text{ for all } \ x\in X . $$ 
	
	Note that a cocycle is fiber bunched if and only if the corresponding projective cocycle is a partially hyperbolic dynamical system.
	
	If $A$ is fiber bunched then there exists $\beta>0$ such that $A$ admits $\beta$-H\"older stable and unstable holonomies (see Theorem~\ref{t.Holder.holonomies}) and $\beta$ can be taken uniformly in a $C^0$ neighborhood of $A$. Moreover, for every $*=s$ or $u$, $x \in X$, $y \in W^*_{\mathrm{loc}} (x)$, the map $A\mapsto H^{*,A}_{x, y}$ varies continuously in the uniform topology.
	
	From now on we fix the base dynamics $T \colon X \to X$ with an ergodic measure $\mu$  that has local product structure with H\"older density and we consider the space of $\alpha$-H\"older linear cocycles over this dynamics. We endow this space with the H\"older norm 
	$$
	\norm{A}_\alpha=\sup_{x\in X}\norm{A(x)}+\sup_{x\neq y}\frac{\norm{A(x)-A(y)}}{\norm{x-y}^\alpha}.
	$$
	
	We recall the following definition from \cite{BoV04}.
	\begin{definition}
		\label{pinching and twisting}
		We say that a cocycle $A \colon X \to\GL$ satisfies the pinching and twisting conditions if there exists a $q$-periodic point  $a=T^q(a)$  and an associated homoclinic point  $z\in \Wuloc(a)$ with $z'=T^l z\in \Wsloc(a)$, for some $l\in\N$, both $a$ and $z\in \supp(\mu)$, such that the transition  map $\psi_{a,z,z'}:\R^d\to\R^d$, defined by  $\psi_{a,z,z'} := H^s_{z', a}\, A^l(z)\, H^u_{a, z}$, satisfies:
		\begin{enumerate}
			\item[(p)] The eigenvalues of the matrix $A^q(a)$ have multiplicity one and different absolute values.   
			Consider (for the next clause) the corresponding  eigenvectors $e_1,\ldots,e_d$ of $A^q(a)$.
			
			\item[(t)] $\lspan \{\psi_{a,z,z'}^{\pm 1}(e_i)\colon i\in I\} + \lspan\{e_j \colon j\in J\}=\R^d$ for any $I,J\subset \{1,\dots,n\}$ such that $\# I+\# J=n$.
		\end{enumerate}
	\end{definition}
	
	Let $g$ be the matrix of $\psi_{a,z,z'}$ written in the base of eigenvectors of $A^q (a)$; the twisting condition is then equivalent to all the algebraic minors of $g$ being different from zero.
	
	A cocycle satisfying the pinching and twisting properties is also called $1$-typical. 
	
	\subsection{ Continuity of the Lyapunov exponents }  
	
	We can now present the precise formulation of the main result of this paper.

	\begin{theorem}\label{theorem.main}
		Let $T \colon X\to X$ be the left shift, let $\mu$ be an ergodic measure that has local product structure with H\"older density and let $A \colon X\to \GL$ be an $\alpha$-H\"older fiber bunched linear cocycle. If $A$ is $1$-typical then there exists a $C^0$ neighborhood of $A$ such that in this neighborhood, the maximal Lyapunov exponent is a H\"older continuous function of the cocycle.
	\end{theorem}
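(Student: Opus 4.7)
The plan is to derive Theorem~\ref{theorem.main} from Theorem~\ref{LDT thm} via the abstract principle from~\cite[Chapter~2]{DK-book}: in any reasonable space of cocycles over a fixed ergodic base, a \emph{uniform} LDT estimate, that is, one whose constants $C, k$ remain valid on a $C^0$ neighborhood, automatically yields local H\"older continuity of the Lyapunov exponents in that space. Hence the content of the theorem reduces to establishing such a uniform LDT estimate near any $1$-typical fiber bunched cocycle.

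To prove the LDT I would adopt the spectral (Markov operator) approach announced in the introduction. First, using the H\"older stable and unstable holonomies guaranteed by Theorem~\ref{t.Holder.holonomies} together with the local product structure of $\mu$ with H\"older density $\rho$, the two-sided problem on $X$ is reduced to a one-sided problem on $X^+$: after conjugating the projective action of $A$ by the unstable holonomy along each local unstable set, the resulting object depends only on future coordinates and descends to a H\"older map over the one-sided shift $T_+$, with the same Lyapunov spectrum. One then studies the associated Markov operator
\[
(\Qop_A \varphi)(x, \hat v) \, = \, \int \varphi\bigl(y, \widehat{\hA(y)\, v}\bigr) \, d K_x(y) \, ,
\]
where $K_x$ is the transition kernel on $X^+$ induced by disintegrating $\mu$ along the stable partition and absorbing the H\"older density $\rho$ of Definition~\ref{def local product measure}. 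Equivalently, one may work with the Ruelle transfer operator reweighted by $\rho$, as in~\cite{park2020transfer}.

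The main technical step, and in my view the principal obstacle, is proving that $\Qop_A$ is quasi-compact with a spectral gap on the Banach space of $\alpha$-H\"older observables on $X^+ \times \proj$, \emph{uniformly} in a $C^0$ neighborhood of $A$. Quasi-compactness follows from a Lasota--Yorke (Doeblin--Fortet) inequality of the form
\[
\norm{\Qop_A^n \varphi}_\alpha \, \leq \, C_1 \, \lambda^n \, \norm{\varphi}_\alpha \, + \, C_2 \, \norm{\varphi}_\infty \, ,
\]
whose contraction factor $\lambda<1$ is produced by combining stable contraction of $T_+$, H\"older control of the holonomies and of $\rho$, and fiber bunching (which provides Lipschitz control of the projective action). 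Upgrading quasi-compactness to a strict spectral gap is where $1$-typicality is essential: by a Furstenberg-type argument running on the pinching and twisting data of Definition~\ref{pinching and twisting}, as in~\cite{BoV04}, the only $\Qop_A$-invariant probability on $X^+ \times \proj$ that projects to $\mu^+$ is the Oseledets stationary measure in the top direction, and $\Qop_A$ has no peripheral eigenvalue other than the simple eigenvalue~$1$. Openness of the pinching and twisting conditions, together with the continuous dependence $A \mapsto H^{s, A}, H^{u, A}$ of the holonomies, then produces the same gap with \emph{uniform} constants in a $C^0$ neighborhood.

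With the uniform spectral gap in hand, the LDT estimate follows by a Nagaev--Guivarc'h style perturbation scheme applied to the family of twisted operators $\varphi \mapsto \Qop_A(e^{t \phi_A}\varphi)$, where $\phi_A(x, \hat v) := \log \norm{\hA(x) v}/\norm{v}$ is the $\alpha$-H\"older fiber log-expansion: the spectral gap supplies analytic control of the leading eigenvalue and hence of the Laplace transform of the Birkhoff sum $\sum_{k=0}^{n-1}\phi_A \circ \hat F_A^k$, from which a Chernoff-type estimate yields a bound of the form $C\, e^{-k\, \dev^2\, n}$, uniform on a $C^0$ neighborhood of $A$. Feeding this uniform LDT into the abstract continuity theorem from~\cite[Chapter~2]{DK-book} then concludes the proof of Theorem~\ref{theorem.main}.
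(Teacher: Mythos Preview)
Your proposal is essentially correct and follows the paper's own strategy: holonomy reduction to a one-sided cocycle, a Lasota--Yorke inequality for the associated Markov operator on H\"older observables over (one-sided base)$\times\proj$, a spectral gap coming from $1$-typicality, the Nagaev--Guivarc'h scheme to extract a uniform LDT, and finally the abstract continuity theorem of~\cite{DK-book}.

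Two small corrections are worth making. First, the direction of your holonomy reduction is reversed: conjugating by the \emph{unstable} holonomies makes the cocycle constant along local unstable sets, hence dependent only on \emph{past} coordinates; the paper accordingly works on $X^-$ (Proposition~\ref{prop.reduction}), not on $X^+$. Second, the paper does not obtain the spectral gap by a direct Furstenberg-type uniqueness argument for the stationary measure; instead it proves a topological irreducibility condition $\supp(m^-)\subseteq \Omega_{K^p}(x^-,\hat v)$ (Proposition~\ref{Omegap dense in supp}), where pinching and twisting are actually used, and feeds this together with the Lasota--Yorke inequality into an abstract strong-mixing criterion built on Ionescu-Tulcea--Marinescu (Theorem~\ref{base strong mixing}). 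Finally, note that the abstract continuity theorem~\cite[Theorem~3.1]{DK-book} also requires a \emph{base} LDT for scalar H\"older observables on $X$, which the paper establishes separately (Theorem~\ref{base ldt}) and which your outline omits.
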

	
	Given a linear cocycle $A \colon X \to \GL$ and an integer $1\le k \le d$, let $\wedge^k A$ be the induced $k$-th exterior power of $A$, where for every base point $x\in X$, $\wedge^k A (x)$ acts linearly on $\wedge^k \R^d$, the $k$-th exterior power of the Euclidean space $\R^d$. 
	
	Note that the pinching condition on the $k$-th exterior power is equivalent to all the product of $k$ distinct eigenvalues being different, and the twisting condition is equivalent to the action of $g$ on the exterior power having all its algebraic minors different from zero. Both of these conditions are open and dense in the subset of $\alpha$-H\"older fiber bunched cocycles (see~\cite{BoV04}). 
	
	A linear cocycle is called {\em typical} if the pinching and twisting conditions hold for all of its exterior powers.
	
	Note that the maximal Lyapunov exponent of $\wedge^k A$ is the sum of the first $k$ Lyapunov exponents of $A$. 
	We then have the following result.
	
	\begin{theorem}\label{theorem.main.allexp}
		Let $T \colon X\to X$ be the left shift, let $\mu$ be an ergodic measure that has local product structure with H\"older density and let $A \colon X\to \GL$ be an $\alpha$-H\"older continuous fiber bunched linear cocycle. If $A$ is typical then there exists a $C^0$ neighborhood of $A$ such that on this neighborhood, all Lyapunov exponents are H\"older continuous functions of the cocycle.
	\end{theorem}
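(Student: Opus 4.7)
The plan is to bootstrap from Theorem~\ref{theorem.main} to all Lyapunov exponents via the standard exterior-power trick. For each $1 \le k \le d$, consider the induced cocycle $\wedge^k A$ acting on $\wedge^k \R^d$. The singular values of $(\wedge^k A)^n(x)$ are the products of distinct $k$-tuples of singular values of $A^n(x)$, so
\[ L_1(\wedge^k A) = L_1(A) + L_2(A) + \cdots + L_k(A). \]
Hence setting $L_1(A) = L_1(\wedge^1 A)$ and $L_k(A) = L_1(\wedge^k A) - L_1(\wedge^{k-1} A)$ for $2 \le k \le d$, it suffices to show that $A \mapsto L_1(\wedge^k A)$ is locally H\"older continuous for every $k$.

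To do so, I would verify that each $\wedge^k A$ fits the hypotheses needed in the proof of Theorem~\ref{theorem.main}. Since typicality of $A$ by definition encodes pinching and twisting on every exterior power, $\wedge^k A$ is $1$-typical with the same distinguished periodic point $a$ and homoclinic connection $(z,z')$ used for $A$. H\"older stable and unstable holonomies for $\wedge^k A$ are obtained from those of $A$ by the tensorial formula $H^{\ast, \wedge^k A}_{x,y} := \wedge^k H^{\ast, A}_{x,y}$. Since taking the $k$-th exterior power is Lipschitz on bounded subsets of $\GL$ and the holonomies of $A$ are uniformly bounded in a $C^0$ neighborhood of $A$, properties (1)--(4) of the holonomy definition transfer with the same H\"older exponent $\beta$ (up to a multiplicative constant), and the continuous dependence $A \mapsto H^{\ast, \wedge^k A}$ inherits from the analogous property for $A$.

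The main obstacle is that $\wedge^k A$ need not satisfy the fiber-bunching inequality in the hypothesis of Theorem~\ref{theorem.main}, because the operator norms $\|\wedge^k M\|$ are amplified by a $k$-th power while the H\"older exponent $\alpha$ stays fixed. This difficulty is only apparent: in the body of the paper, fiber bunching is invoked exclusively to produce H\"older holonomies (via Theorem~\ref{t.Holder.holonomies}), and here the required holonomies are supplied directly. A careful reading of Sections~\ref{holonomy}--\ref{continuity} should confirm that every subsequent ingredient --- the holonomy reduction, the Markov operator and its quasicompactness, the stationary measures, and the uniform LDT estimate of Theorem~\ref{LDT thm} --- depends only on typicality and on a uniformly H\"older family of holonomies, not on the bunching inequality itself. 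This is the crucial technical check. Granted it, the abstract continuity theorem of~\cite[Chapter~2]{DK-book} delivers local H\"older continuity of $L_1(\wedge^k A)$ as a function of $A$ in the $C^0$ topology, and taking differences completes the proof.
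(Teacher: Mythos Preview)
Your approach is correct and matches the paper's: the paper states Theorem~\ref{theorem.main.allexp} immediately after noting that $L_1(\wedge^k A)=\sum_{i\le k} L_i(A)$ and that typicality means pinching and twisting on every exterior power, leaving the reduction to Theorem~\ref{theorem.main} implicit.

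You are in fact more careful than the paper on one point: the possible failure of the fiber-bunching inequality for $\wedge^k A$, since $\|\wedge^k A^N\|\,\|\wedge^k (A^N)^{-1}\|$ can be as large as $(\|A^N\|\,\|(A^N)^{-1}\|)^k$. The paper does not comment on this. Your fix---building the holonomies of $\wedge^k A$ as $\wedge^k H^{*,A}_{x,y}$ and observing that Sections~\ref{holonomy}--\ref{continuity} use fiber bunching only through the existence of uniformly H\"older holonomies (Theorem~\ref{t.Holder.holonomies}, Proposition~\ref{prop.reduction})---is exactly right and closes the gap cleanly.
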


	\begin{remark}
		Let $(M, f)$ be a hyperbolic homeomorphism. Given a H\"older continuous linear cocycle $A \colon M\to \GL$ we use the H\"older continuous semi-conjugation $\pi$ of $(M, f)$ to a Markov shift $(X, T)$ to obtain the cocycle $A\circ \pi \colon X \to \GL$  over the shift $T$. This allows us to transfer  the results above to cocycles over uniformly hyperbolic homeomorphisms.
	\end{remark}


	\section{Holonomy Reduction}
	\label{holonomy}
	In this section we prove the existence of H\"older holonomies. Moreover we show that up to a conjugacy, the cocycle $A$ can be taken to be constant on unstable sets.
	
	Firstly observe that if $A$ is fiber bunched then there exists $\tau<1$ such that $\norm{A^N(x)}\norm{A^N(x)^{-1}}2^{-N\alpha}<\tau<1$.
	\begin{theorem}\label{t.Holder.holonomies}
		If $A$ is an $\alpha$-H\"older, fiber bunched cocycle, then there exists $0<\beta<\alpha$ such that $A$ admits $\beta$-H\"older stable holonomies. Moreover, $\beta$ depends on $\norm{A},\alpha,\tau,Lip(T)$, so can be taken uniform on a $C^0$ neighborhood of $A$.
	\end{theorem}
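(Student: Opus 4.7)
The plan is to construct the stable holonomies via the classical telescoping limit
\[
H^s_{x,y} := \lim_{n \to \infty} A^n(y)^{-1}\, A^n(x), \qquad y \in \Wsloc(x),
\]
and then to use the fiber-bunching hypothesis to prove convergence with the required H\"older regularity. The key algebraic identity is the one-step difference
\[
A^{n+1}(y)^{-1}\, A^{n+1}(x) - A^n(y)^{-1}\, A^n(x) = A^n(y)^{-1} \bigl[ A(T^n y)^{-1} A(T^n x) - \id \bigr] A^n(x),
\]
whose middle factor is bounded by $C_\alpha\, \dist(T^n x, T^n y)^\alpha \le C_\alpha\, 2^{-n\alpha} \dist(x,y)^\alpha$ using the $\alpha$-H\"older continuity of $A$ together with the fact that the shift contracts local stable sets by the factor $2^{-1}$.

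Next, I would control the outer factors $\norm{A^n(y)^{-1}}\, \norm{A^n(x)}$ by writing $n = kN+r$ and iterating the fiber-bunching inequality $\norm{A^N(\cdot)}\, \norm{A^N(\cdot)^{-1}} \le \tau\, 2^{N\alpha}$ block by block; to close the loop one replaces $\norm{A^N(T^{jN} y)^{-1}}$ by $\norm{A^N(T^{jN} x)^{-1}}$ up to an exponentially small H\"older error, harmless inside a product of geometrically decaying terms. Combining the two estimates, the telescoping terms satisfy
\[
\norm{A^{n+1}(y)^{-1} A^{n+1}(x) - A^n(y)^{-1} A^n(x)} \le C_1\, \tau^{\lfloor n/N \rfloor}\, \dist(x,y)^\alpha ,
\]
so the limit $H^s_{x,y}$ exists in $\GL$ and obeys $\norm{H^s_{x,y} - \id} \le C_2\, \dist(x,y)^\alpha$. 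This gives property (3), while (1) follows from the uniqueness and composition of such limits, and (2) is immediate by reindexing $n \mapsto n+1$ in the defining sequence.

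The main obstacle is property (4), joint H\"older control when the pairs $(x,y)$ and $(x',y')$ lie on possibly distinct local stable sets. My plan is the standard cutoff argument: set $\delta := \max\{\dist(x,x'),\dist(y,y')\}$, choose a scale $n_0 \asymp \log_2(1/\delta)$, and split $H^s_{x,y} - H^s_{x',y'}$ into the finite-order difference $A^{n_0}(y)^{-1} A^{n_0}(x) - A^{n_0}(y')^{-1} A^{n_0}(x')$ plus the two telescoping tails from $n \ge n_0$. The tails are dominated by $\tau^{n_0/N}\, \delta^\alpha$ via the estimate above; the finite-order difference is controlled by a telescoping product in the cocycle, where each H\"older comparison of $A$ at points at distance $\le \Lip(T)^{n_0} \delta$ contributes a factor amplified by at most $\norm{A}^{n_0}$. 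Balancing $n_0$ so that the two bounds meet yields the desired H\"older exponent $\beta \in (0,\alpha)$ depending only on $\norm{A}, \alpha, \tau$, and $\Lip(T)$. Finally, uniformity over a $C^0$ neighborhood of $A$ is immediate, since fiber bunching is a $C^0$-open condition: for every $\tilde A$ sufficiently close to $A$ one may take the same $N$ with a slightly worse $\tilde \tau < 1$, so all constants in the construction can be chosen uniformly on the neighborhood.
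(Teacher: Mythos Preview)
Your proposal is correct and follows essentially the same route as the paper: construct $H^s_{x,y}$ as the limit of $A^n(y)^{-1}A^n(x)$, control the telescoping tail via fiber bunching to get properties (1)--(3), and for property (4) split at a scale $n_0 \asymp \log(1/\delta)$ and balance the finite-order H\"older error (growing like $K^{n_0}\delta^\alpha$ with $K$ built from $\norm{A}$, $\norm{A^{-1}}$, $\Lip(T)$) against the tail (decaying like $\tau^{n_0/N}$), which yields the exponent $\beta$. One small correction: the tail bound is $C\,\tau^{n_0/N}$, not $\tau^{n_0/N}\delta^\alpha$, since the telescoping remainder at $(x,y)$ carries $d(x,y)^\alpha$ (which need not be comparable to $\delta^\alpha$) and one simply uses $d(x,y)\le 1$; this does not affect the balancing.
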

	\begin{proof}
		For $x\in \Wsloc (y)$ define $H^n_{x,y}={A^n(y)}^{-1}A^n(x)$ by \cite[Lemme~1.12]{BGV03} when $n$ goes to infinity $H^n_{x,y}$ converges to a linear map $H^s_{x,y}$ that satisfies the first 3 properties on the definition of holonomies, so to prove that is a $\beta$-H\"older holonomy we are left to prove that it also satisfies the $4^{rth}$ property. 
		
		Take $x\in \Wsloc (y)$ and $x'\in \Wsloc (y')$.
		By \cite[Remarque~1.13]{BGV03} there exists $C_1>0$ such that $\norm{H^s_{x,y}-H^n_{x,y}}\leq C_1\tau^n d(x,y)^{\alpha}$,
		so we get that for every $n>0$
		\begin{equation}\label{eq.holder.holonomy}
		\norm{H^s_{x,y}-H^s_{x',y'}}\leq \norm{H^n_{x,y}-H^n_{x',y'}}+2C_1\tau^n.
		\end{equation}
		Now observe that
		$$\norm{H^n_{x,y}-H^n_{x',y'}}\leq\norm{{(A^n)^{-1}}}\norm{A^n(x)-A^n(x')}+\norm{A^n}\norm{A^n(y)^{-1}-A^n(y')^{-1}}, $$
		and let $L=\Lip(T)$, we can estimate 
		$$
		\begin{aligned}
		\norm{A^n(x)-A^n(x')}&\leq \norm{A}^{n-1}\sum_{j=0}^{n-1}\norm{A(T^j(x))-A(T^j(x'))}\\
		&\leq \norm{A}^{n-1}(\sum_{j=0}^{n-1}L^{\alpha j})v_{\alpha}(A)\dist(x,x')^{\alpha}\\
		&\leq	\norm{A}^{n-1}\frac{L^{(n+1)\alpha}}{L^\alpha-1}v_{\alpha}(A)\dist(x,x')^{\alpha}.
		\end{aligned}
		$$
		
		Let $K=\max\{\norm{A},\norm{A^{-1}}\}^2 L^\alpha$ and 
		$$c=2 \max\{v_{\alpha}(A),v_{\alpha}(A^{-1})\}\, \frac{L^{\alpha}}{L^\alpha-1}, $$
		then we get
		$
		\norm{(A^n) ^{-1}}\norm{A^n(x)-A^n(x')}\leq c K^{n} \dist(x,x')^{\alpha}
		$.  We have an analogous inequality for $\norm{A^n(y)^{-1}-A^n(y')^{-1}}$, 
		so from \eqref{eq.holder.holonomy} we get 
		\begin{equation}\label{eq.holder.holonomy2}
		\norm{H^s_{x,y}-H^s_{x',y'}}\leq 2 c K^n \max\{\dist(x,x')^{\alpha},\dist(y,y')^{\alpha}\} +2 C_1 \tau^n.
		\end{equation}
		
		Without loose of generality assume that $\dist(y,y')\leq \dist(x,x')$.
		Take any $\gamma<\alpha$ and $n$ such that 
		$$
		\frac{\gamma-\alpha}{\log K}\log\dist(x,x')-1<n\leq \frac{\gamma-\alpha}{\log K}\log\dist(x,x').
		$$
		
		We get that 
		$$
		\begin{aligned}
		c K^n  \dist(x,x')^{\alpha}+C_1 \tau^n &\leq c \dist(x,x')^\gamma +C_1 \tau^{\frac{\gamma-\alpha}{\log K}\log\dist(x,x')-1} \\
		& = c \dist(x,x')^\gamma +C_1 e^{-1}\dist(x,x')^{\frac{\gamma-\alpha}{\log K}\log \tau},
		\end{aligned}
		$$
		taking $\beta=\min\{\gamma,\frac{\gamma-\alpha}{\log K}\log \tau\}$ we have 
		$$c K^n  \dist(x,x')^{\alpha}+C_1\tau^n<(c+e^{-1}C_1)\dist(x,x')^\beta.$$ 
		So we can conclude that
		\begin{equation}
		\norm{H^s_{x,y}-H^s_{x',y'}}\leq 2 (c+C_1 e^{-1}) \max\{\dist(x,x')^{\beta},\dist(y,y')^{\beta}\}
		\end{equation}
	\end{proof}
	Now we are going to replace $A$ by a cocycle conjugate to $A$ that only depends on the negative part of the sequence, this construction was already made on \cite{AvV1}, we explain it here for completeness.

	\begin{proposition}\label{prop.reduction}
		Given $A:X\to \GL$, $\alpha$-H\"older continuous and fiber bunched, there exists $A^-:X^-\to \GL$ $\beta$-H\"older continuous, where $0<\beta< \alpha$ depends on $\alpha$, $\norm{A}$ and $\tau$, such that $A^s=A^-\circ P^-$ is conjugated to $A\circ T^{-1}$.  Moreover the conjugation is given by $(x,v)\mapsto (x,H(x)v)$, where $H:X\to \GL$ is $\beta$-H\"older continuous, $\norm{H}$ and $\norm{H^{-1}}$ are uniformly bounded on a neighborhood of $A$ and the map $A\mapsto A^s$ is continuous on the uniform topology.
	\end{proposition}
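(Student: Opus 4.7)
The idea is to use unstable holonomies to identify the fibers over each local unstable leaf and thereby absorb the future-dependence of $A$. Fix a section $\pi\colon X\to X$ with $\pi(x)\in\Wuloc(x)$ and $\pi(x)$ depending only on $P^-(x)$ — for instance, inside each length-one cylinder $[i]$ let $\pi(x)$ be the point whose past equals that of $x$ and whose future is a fixed canonical tail determined by $x_0$; the map $\pi$ is then Lipschitz in the shift metric. Set $H(x):=H^u_{\pi(x),x}$. By Theorem~\ref{t.Holder.holonomies}, the family $(x,y)\mapsto H^u_{x,y}$ is jointly $\beta$-H\"older with $\beta$ depending only on $\alpha,\norm{A},\tau,\Lip(T)$, so $H\colon X\to \GL$ is $\beta$-H\"older. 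Because $d(\pi(x),x)$ is uniformly bounded and $\norm{H^u_{u,v}-\id}\le C\,d(u,v)^\beta$, both $\norm{H}$ and $\norm{H^{-1}}$ are uniformly bounded on a $C^0$-neighborhood of $A$, and $A\mapsto H$ is continuous in the uniform topology because $A\mapsto H^u$ is.

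Now define $A^s(x):=H(Tx)\,A(T^{-1}x)\,H(x)^{-1}$, which is the cocycle obtained from $A\circ T^{-1}$ by conjugation with $(x,v)\mapsto(x,H(x)v)$. The crux of the proof is to verify that $A^s(y)=A^s(x)$ for every $y\in\Wuloc(x)$. Once this is established, $A^s$ factors through $P^-$, so setting $A^-(x^-):=A^s(x)$ for any $x\in(P^-)^{-1}(x^-)$ yields $A^-\colon X^-\to \GL$; its $\beta$-H\"older regularity is inherited from the H\"older continuity of $A^s$ together with the bi-Lipschitz trivializations that $P^-$ admits inside each cylinder, and the uniform bounds and continuous dependence on $A$ transfer from the corresponding properties of $H$ and $A$.

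The core computation goes as follows. For $y\in\Wuloc(x)$ one has $\pi(y)=\pi(x)$ and, since $T^{-1}y\in\Wuloc(T^{-1}x)$, also $\pi(T^{-1}y)=\pi(T^{-1}x)$. The key ingredient is the backward transport identity for unstable holonomies,
\[
H^u_{x,y}\,A(T^{-1}x)=A(T^{-1}y)\,H^u_{T^{-1}x,\,T^{-1}y},
\]
which holds because unstable holonomies of $A$ are stable holonomies of the inverse cocycle over $T^{-1}$. Substituting this identity into $A^s(y)$, together with the cocycle rule $H^u_{a,b}H^u_{b,c}=H^u_{a,c}$ and the factorization $H(y)^{-1}=H^u_{y,x}\,H(x)^{-1}$ that follows from $\pi(y)=\pi(x)$, the holonomy factors depending on the future of $x$ rearrange and cancel, leaving exactly $A^s(x)$. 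The main obstacle is precisely this verification: although $\pi$ is constructed to depend only on the past, $\pi(Tx)$ genuinely depends on the extra symbol $x_1$ revealed by the shift, and the role of the unstable holonomies is to absorb that dependence so that the conjugated product becomes insensitive to the future of $x$.
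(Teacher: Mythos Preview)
Your approach is the paper's approach---define a section $\pi$ (the paper calls it $\theta$) into local unstable leaves and conjugate by the unstable holonomy to that section---but the conjugating map is mis-indexed, and with your $H$ the resulting $A^s$ is \emph{not} constant along local unstable sets.

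Concretely: with $H(x)=H^u_{\pi(x),x}$ your formula gives
\[
A^s(x)=H(Tx)\,A(T^{-1}x)\,H(x)^{-1}=H^u_{\pi(Tx),\,Tx}\,A(T^{-1}x)\,H^u_{x,\pi(x)},
\]
and the factor $H^u_{\pi(Tx),Tx}$ depends on $P^-(Tx)$, hence on $x_1$. Your core computation invokes $\pi(T^{-1}y)=\pi(T^{-1}x)$, but $T^{-1}$ never appears in the formula; the problematic term is $\pi(Tx)$, and for $y\in\Wuloc(x)$ one need not have $\pi(Ty)=\pi(Tx)$ since $Ty\notin\Wuloc(Tx)$ in general. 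A quick counterexample: take $A(x)=B(x_2)$ for some map $B$ into $\GL$. Then $H^u_{a,b}=B(b_1)B(a_1)^{-1}$, and your $A^s(x)$ computes to $B(x_2)\,B(\phi(x_1))^{-1}\,B(x_1)\,B(\phi(x_0))\,B(x_1)^{-1}$ (where $\phi(x_0)=\pi(x)_1$), which visibly depends on $x_1,x_2$. So the ``absorption'' you allude to does not happen.

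The fix is a one-step shift in $H$: set the conjugating map to be $C(x):=H^u_{\pi(T^{-1}x),\,T^{-1}x}$ (this is exactly the paper's choice). Then $C(Tx)=H^u_{\pi(x),x}$ and
\[
A^s(x)=C(Tx)^{-1}\,A(T^{-1}x)\,C(x)=H^u_{x,\pi(x)}\,A(T^{-1}x)\,H^u_{\pi(T^{-1}x),\,T^{-1}x}.
\]
Now the backward transport identity $A(T^{-1}x)\,H^u_{\pi(T^{-1}x),T^{-1}x}=H^u_{T\pi(T^{-1}x),\,x}\,A(\pi(T^{-1}x))$ collapses the product to $H^u_{T\pi(T^{-1}x),\,\pi(x)}\,A(\pi(T^{-1}x))$, which depends only on $P^-(x)$ because $\pi(x)$ and $\pi(T^{-1}x)$ do. In the toy example this gives $B(\phi(x_0))\,B(x_0)\,B(\phi(x_{-1}))^{-1}$, a function of $P^-(x)$ as required. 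The remaining claims (H\"older regularity from Theorem~\ref{t.Holder.holonomies}, uniform bounds, continuous dependence on $A$) are then exactly as you sketched.
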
 
	\begin{proof}
		For each $i\in \Sigma$ fix $p_i\in [i]$ and define $\theta:X\to X$ as $\theta(x)$ to be the unique point that belongs to $\Wsloc(p_i)\cap \Wuloc(x)$ for $x\in [i]$. Using the unstable holonomies define $A^s:X\to \GL$ as
		$$
		A^s(x)=H^u_{x,\theta(x)} A(T^{-1}x) H^u_{\theta(T^{-1}x),T^{-1}x}=H^u_{T(\theta(T^{-1}x)),\theta(x)} A(T^{-1}\theta(x)),
		$$
		then we have that $A^s(x)$ only depend on the negative part of $x$ so there exists $A^-:X^-\to \GL$ such that $A^s=A^-\circ P^-$. Also $A^s$ is conjugated to $A\circ T^{-1}$ by the function $(x,v)\mapsto (x,H^u_{\theta(T^{-1}x),T^{-1}x}v)$, so $A^-$ has the same exponents as $A$. Also fixing $x,y$ the map $A\mapsto H^u_{x,y}$ is continuous on the uniform topology, then $A\mapsto A^s$ also is.
		
		By theorem~\ref{t.Holder.holonomies} we have that $A^-$ is $\beta$-H\"older. 
	\end{proof} 
	
	From now on we assume that $A$ is of the form $A^-\circ P^-$.

	
	\section{Quasi-compact Markov operators}
	\label{quasicompact}
	 
	In this section we recall concepts like Markov operators, stationary measures
	and provide a sufficient criterion for the strong mixing property based on the quasi-compactness of the Markov operator and the classical theorem of Ionescu-Tulcea and Marinescu.

	\begin{definition}
		A stochastic dynamical system (SDS) on a compact metric space $\Gamma$  is any continuous mapping $K:\Gamma\to \Prob(\Gamma)$, where the space $\Prob(\Gamma)$ of Borel probability measures on $\Gamma$ is endowed with the weak-$\ast$ topology.
	\end{definition}
	
	A SDS $K:\Gamma\to \Prob(\Gamma)$ can be recursively iterated as follows
	$$K^n:\Gamma\to \Prob(\Gamma),\quad  K^n_x :=\int_\Gamma K^{n-1}_y\, dK_x(y) \quad \text{ for }\, n\geq 1,\; x\in\Gamma . $$ 
	\begin{proposition}
		Given a SDS $K:\Gamma\to\Prob(\Gamma)$, the linear operator $\Qop_K:C^0(\Gamma)\to C^0(\Gamma)$ defined by $(\Qop_K \varphi)(x):=\int_\Gamma \varphi\, dK_x$, is  positive, bounded (with norm $\leq 1$)
		and fixes the constant functions.
		Its adjoint $\Qop^\ast$ leaves the convex set $\Prob(\Gamma)$ invariant where 
		$\Qop^\ast \mu= \int K_x\, d\mu(x)$.
	\end{proposition}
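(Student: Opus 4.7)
The proposition collects seven essentially routine properties, and my plan is to handle them in the order they are stated, with the continuity statement (that $\Qop_K$ actually lands in $C^0(\Gamma)$) being the only claim that uses a nontrivial hypothesis on $K$.

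First I would verify that $\Qop_K\varphi$ is continuous whenever $\varphi\in C^0(\Gamma)$. This is exactly where the continuity of the SDS $K\colon\Gamma\to\Prob(\Gamma)$ enters: by definition of the weak-$\ast$ topology on $\Prob(\Gamma)$, the continuity of $x\mapsto K_x$ means that $x\mapsto\int_\Gamma\varphi\,dK_x$ is continuous for every $\varphi\in C^0(\Gamma)$, which is precisely the statement that $\Qop_K\varphi\in C^0(\Gamma)$. Linearity of $\Qop_K$ is immediate from the linearity of the integral. Positivity is equally direct: if $\varphi\geq 0$ then $(\Qop_K\varphi)(x)=\int\varphi\,dK_x\geq 0$ since $K_x$ is a positive measure. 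For the norm bound, the estimate $\abs{(\Qop_K\varphi)(x)}\leq\int\abs{\varphi}\,dK_x\leq \norm{\varphi}_\infty\, K_x(\Gamma)=\norm{\varphi}_\infty$ shows $\norm{\Qop_K}\leq 1$, and testing on $\varphi\equiv 1$ gives $(\Qop_K\mathbf{1})(x)=K_x(\Gamma)=1$, which simultaneously proves that $\Qop_K$ fixes the constants and shows that the bound is attained.

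For the statement about the adjoint, I would proceed by the standard identification of the dual of $C^0(\Gamma)$ with the space of finite signed Borel measures (Riesz representation, available since $\Gamma$ is compact metric, hence Polish). Given $\mu\in\Prob(\Gamma)$, I would define $\Qop^\ast\mu$ by duality, $\int\varphi\,d(\Qop^\ast\mu):=\int\Qop_K\varphi\,d\mu$ for every $\varphi\in C^0(\Gamma)$, and then unravel the right-hand side via Fubini's theorem:
\[
\int_\Gamma\Qop_K\varphi\,d\mu=\int_\Gamma\Bigl(\int_\Gamma\varphi\,dK_x\Bigr)\,d\mu(x)=\int_\Gamma\varphi\,d\Bigl(\int_\Gamma K_x\,d\mu(x)\Bigr),
\]
which justifies the explicit formula $\Qop^\ast\mu=\int K_x\,d\mu(x)$. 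Invariance of $\Prob(\Gamma)$ under $\Qop^\ast$ then follows at once: positivity of the measure is inherited from the positivity of each $K_x$, and total mass is preserved because $\Qop_K\mathbf{1}=\mathbf{1}$ gives $(\Qop^\ast\mu)(\Gamma)=\int\Qop_K\mathbf{1}\,d\mu=\mu(\Gamma)=1$.

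The only subtle point that requires any care is the Fubini step, which needs joint measurability of $(x,y)\mapsto\varphi(y)$ with respect to the product of $\mu$ and the disintegration $K_x$; this is automatic for continuous $\varphi$ on a compact metric (hence Polish) $\Gamma$ together with the continuity of $x\mapsto K_x$, so no measure-theoretic difficulty arises. I do not anticipate any real obstacle in the proof; the content is essentially an abstract verification, with the SDS continuity hypothesis doing exactly the work needed to keep the operator inside $C^0(\Gamma)$.
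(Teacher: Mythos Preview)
Your proposal is correct and matches the paper's approach: the paper simply writes ``Straightforward,'' and what you have written is exactly the routine verification that justifies that word. There is nothing to add or correct.
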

	
	\begin{proof}
		Straightforward.
	\end{proof}

	\begin{definition}  $\Qop_K$ is called the Markov operator of $K$.
		We call $K$-stationary measure any measure $\mu\in\Prob(\Gamma)$ such that $\Qop^\ast\mu=\mu$.
	\end{definition}

	The dynamics of the SDS is encapsulated by the operator $\Qop_K$. We have for instance that
	$\Qop_{K^n}=(\Qop_K)^n$ for all $n\in\N$.

	\begin{theorem}[Ionescu-Tulcea, Marinescu]
		\label{ITMT}
		Let $(\Escr_1,\norm{\cdot}_1)$ and
		$(\Escr,\norm{\cdot})$ be complex Banach spaces, $\Escr_1\subseteq \Escr$, and $\Qop:\Escr\to \Escr$ a linear operator such that:
		\begin{enumerate}
			\item $\Qop:\Escr\to \Escr$ is a bounded operator in $(\Escr,\norm{\cdot})$ with $\norm{\Qop}\leq 1$.
			\item If $\varphi_n\in \Escr_1$, $\varphi\in\Escr$, $\lim_{n\to \infty}\norm{\varphi_n-\varphi}= 0$  and $\norm{\varphi_n}_1\leq C$ for all $n\in\N$ then $\varphi\in \Escr_1$ and $\norm{\varphi}_1\leq C$.
			\item The inclusion $i:\Escr_1\to \Escr$ is bounded and compact, i.e., for some constant $M<\infty$, $\norm{\varphi}\leq M\,\norm{\varphi}_1$  for all $\varphi\in\Escr_1$ and any bounded set in $(\Escr_1,\norm{\cdot}_1)$ is relatively compact in $(\Escr,\norm{\cdot})$.
			\item There exist $0<\sigma<1$, $n_0\in\N$ and $C<\infty$ such that for all $\varphi\in \Escr_1$,\; 
			$\norm{\Qop^{n_0} \varphi}_1\leq \sigma^{n_0} \, \norm{\varphi}_1 + C\, \norm{\varphi}$.
		\end{enumerate}
		Then $\Qop$ has a finite set $G$ of eigenvalues with finite multiplicity of absolute value $1$ and the rest of the spectrum of $\Qop$ is a compact subset of the unit disk $\{z\in \C\colon \abs{z}<1\}$.
		Furthermore there exist bounded linear operators 
		$\{L_\lambda:\Escr_1\to \Escr_1\}_{\lambda\in G}$ and $N:\Escr_1\to \Escr_1$ such that 
		$$  \Qop^{n_0} = \sum_{\lambda \in G} \lambda^{n_0} \, L_\lambda + N $$
		and for all $\lambda\in  G$:
		\begin{enumerate}
			\item \; $L_\lambda\circ L_{\lambda'} = 0$, for $\lambda'\in G$ with $\lambda\neq \lambda'$,
			\item \; $L_\lambda\circ L_\lambda=L_\lambda$,
			\item \; $L_\lambda\circ N=N\circ L_\lambda=0$,
			\item \; $L_\lambda(\Escr_1)=\{\varphi\in\Escr_1 : \Qop \varphi=\lambda\,\varphi \}$,
			\item \; $N$ has spectral radius $<1$.
		\end{enumerate}
	\end{theorem}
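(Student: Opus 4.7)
The plan is to prove this by establishing quasi-compactness of $\Qop$ on $\Escr_1$ via iteration of the Doeblin--Fortet-type inequality in hypothesis (4), then reading off the spectral decomposition from Riesz--Schauder theory. First I would iterate (4): writing $\varphi_k := \Qop^{k n_0} \varphi$ and applying (4) to $\varphi_{k-1}$, I get $\norm{\varphi_k}_1 \le \sigma^{n_0}\norm{\varphi_{k-1}}_1 + C\,\norm{\varphi_{k-1}}$, and since $\norm{\varphi_{k-1}} \le \norm{\Qop}^{(k-1)n_0}\norm{\varphi}$ is controlled by (1), a geometric sum yields $\norm{\Qop^{k n_0} \varphi}_1 \le \sigma^{k n_0}\norm{\varphi}_1 + C'\norm{\varphi}$ with $C'$ independent of $k$. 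Together with compactness of the inclusion $i\colon \Escr_1 \hookrightarrow \Escr$ from (3), this Lasota--Yorke-style inequality forces the essential spectral radius of $\Qop$ on $\Escr_1$ to be at most $\sigma$ (either by Nussbaum's formula, or by a direct argument: the image of the unit ball of $\Escr_1$ under $\Qop^{k n_0}$ is, modulo a $\sigma^{k n_0}$-small perturbation, relatively compact in $\Escr_1$ by condition (2) applied to approximants from the compactly embedded unit ball).

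Next I would invoke Riesz--Schauder theory to conclude that the spectrum of $\Qop$ on $\Escr_1$ outside the disk $\{|z| \le \sigma\}$ consists of isolated eigenvalues of finite algebraic multiplicity. To pin these eigenvalues inside the closed unit disk, I would lift any $\Escr_1$-eigenfunction $\varphi$ with $\Qop\varphi = \lambda \varphi$ to $\Escr$ (via the bounded inclusion from (3)) and use (1): $|\lambda|^n \norm{\varphi} = \norm{\Qop^n \varphi} \le \norm{\varphi}$, forcing $|\lambda| \le 1$. Finiteness of the peripheral set $G := \{\lambda \in \mathrm{spec}(\Qop) : |\lambda| = 1\}$ is then immediate from Riesz--Schauder applied to any annulus $\sigma < |z| \le 1$.

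Having the peripheral spectrum isolated, I would construct the spectral projections $L_\lambda$ by the holomorphic functional calculus: take disjoint small circles $\gamma_\lambda$ around each $\lambda \in G$ contained in the resolvent set, and set $L_\lambda := \frac{1}{2\pi i}\oint_{\gamma_\lambda}(z - \Qop)^{-1}\, dz$; then orthogonality $L_\lambda L_{\lambda'} = 0$ for $\lambda \ne \lambda'$, idempotence $L_\lambda^2 = L_\lambda$, and the range identification with the (generalized) eigenspace follow from standard Dunford calculus. One checks that on each range, $\Qop$ acts as multiplication by $\lambda$ (no Jordan blocks occur because $\norm{\Qop^n} \le 1$ on $\Escr$ rules out polynomial growth). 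Setting $P := \sum_{\lambda \in G} L_\lambda$ and $N := \Qop^{n_0}(I - P)$, the spectral radius of $N$ equals the spectral radius of $\Qop^{n_0}$ restricted to $\mathrm{Ker}(P)$, which by construction is bounded by $\max\{\sigma^{n_0}, \max_{|z| \in \mathrm{spec}(\Qop) \setminus G}|z|^{n_0}\} < 1$, since the spectrum off $G$ is a compact subset of the open unit disk.

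The main obstacle, in my view, is justifying that the iterated Doeblin--Fortet inequality actually forces $r_{\mathrm{ess}}(\Qop|_{\Escr_1}) \le \sigma$; one has to be careful that condition (2) plays a genuine role (so that limits of $\Escr_1$-bounded sequences that converge in $\Escr$ land back in $\Escr_1$ with controlled $\norm{\cdot}_1$-norm), and that the spectrum one computes on $\Escr_1$ matches, on the peripheral part, the one seen through the inclusion into $\Escr$ — otherwise the bound $|\lambda| \le 1$ above is not meaningful. The remaining steps are essentially bookkeeping within the functional calculus framework.
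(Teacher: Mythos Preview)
The paper does not prove this theorem at all: its proof consists solely of the citation ``See~\cite[Theorem 7.1.1]{GB97} or~\cite{IM50}.'' Your sketch is the standard route to the Ionescu--Tulcea--Marinescu theorem (iterate the Doeblin--Fortet inequality, combine with the compact embedding to bound the essential spectral radius, then apply Riesz--Schauder and the holomorphic functional calculus), which is essentially what those references contain, so there is nothing to compare against in the paper itself.

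One small caution on your argument: the step ruling out Jordan blocks on the peripheral spectrum deserves a bit more care. You argue that $\norm{\Qop^n}\le 1$ on $\Escr$ forbids polynomial growth, but the spectral projections $L_\lambda$ and the generalized eigenspaces live in $\Escr_1$, so you need to transport the bound: if $(\Qop-\lambda)^2\varphi=0$ with $(\Qop-\lambda)\varphi\ne 0$ and $|\lambda|=1$, then $\Qop^n\varphi=\lambda^n\varphi+n\lambda^{n-1}(\Qop-\lambda)\varphi$ in $\Escr_1\subseteq\Escr$, and the $\Escr$-norm of the right side grows linearly while the left is bounded by $\norm{\varphi}$ via (1) and (3). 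This is exactly what you intend, but it is worth making the passage through the inclusion explicit, as you yourself flag in your final paragraph.
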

	
	\begin{proof}
		See~\cite[Theorem 7.1.1]{GB97} or ~\cite{IM50}.

	\end{proof}

	\begin{remark}
		The conclusion of Theorem~\ref{ITMT}  is usually expressed saying that $\Qop$ is a quasi-compact operator.
	\end{remark}

	Let $(\Sigma,d)$ be a compact metric space with diameter $\leq 1$.
	Given $0<\alpha\leq 1$ define the seminorm $v_\alpha:C^0(\Sigma)\to [0,+\infty]$
	$$ v_\alpha(\varphi):= \sup_{x\neq y} \frac{\abs{\varphi(x)-\varphi(y)}}{d(x,y)^\alpha} $$
	and the norm\, 
	$\norm{ \varphi }_\alpha := v_\alpha(\varphi) + \norm{\varphi}_\infty  $.
	The space $(C^\alpha(\Sigma), \norm{\cdot}_\alpha)$
	of  $\alpha$-H\"older continuous observables
	$\varphi:\Sigma\to\R$ is a Banach algebra and vector lattice.
	
	The SDS $K$ determines the point-set map
	\begin{equation}
	\label{def Omega K}
	\Omega_K:\Sigma\to 2^\Sigma,\quad \Omega_K(x):= \overline{\cup_{n\geq 0} \supp(K_x^n)} . 
	\end{equation}
	
	\begin{theorem} 
		\label{base strong mixing}
		Given  $\Sigma$ compact, $\mu\in\Prob(\Sigma)$
		and  $K:\Sigma\to\Prob(\Sigma)$ a SDS with  Markov 
		operator $\Qop$, if 
		\begin{enumerate}
			\item  $\mu$ is a $K$-stationary measure;
			
			\item For every $p\in\N$ and  $x\in \Sigma$, $\supp(\mu)\subseteq \Omega_{K^p}(x) $;

			\item There are constants $0<\alpha \leq 1$,   $n_0\geq 1$, $C<\infty$ and $0<\sigma<1$
			such that for every $\varphi\in C^\alpha(\Sigma)$,
			$$ v_\alpha(\Qop^{n_0}\varphi) \leq \sigma^{n_0} v_\alpha(\varphi)+ C\, \norm{\varphi}_\infty  $$	
		\end{enumerate}
		then $\Qop$ is strongly mixing on the space $C^\alpha(\Sigma)$, i.e.,   there exists  $0<\sigma_0<1$
		such that for all $\varphi\in C^\alpha(\Sigma)$ and some $C_0=C_0(\norm{\varphi}_\alpha)<\infty$,
		$$ \norm{ \Qop^n \varphi-\smallint \varphi\, d\mu }_\infty \leq C_0\, \sigma_0^n  \qquad \forall \, n\in\N .$$
	\end{theorem}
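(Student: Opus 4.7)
The plan is to derive quasi\-compactness of $\Qop$ from Theorem~\ref{ITMT} and then use a maximum principle powered by hypothesis (2) to pin down the peripheral spectrum. Applying ITMT with $\Escr=C^0(\Sigma)$ and $\Escr_1=C^\alpha(\Sigma)$, the four hypotheses are straightforward to verify: $\Qop$ is a $C^0$\-contraction by the Proposition preceding the definition of Markov operator; the H\"older seminorm is lower semicontinuous under uniform convergence; the inclusion $C^\alpha\hookrightarrow C^0$ is compact by Arzel\`a--Ascoli; and hypothesis (3) above directly yields the Doeblin--Fortet inequality (item (iv) of ITMT), once the term $\norm{\Qop^{n_0}\varphi}_\infty\leq \norm{\varphi}_\infty$ is absorbed into $\norm{\varphi}_\alpha=v_\alpha(\varphi)+\norm{\varphi}_\infty$. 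The output is a spectral decomposition $\Qop^{n_0}=\sum_{\lambda\in G}\lambda^{n_0}L_\lambda+N$ with $G$ a finite set of unimodular eigenvalues of finite multiplicity and $N$ of spectral radius strictly less than $1$.

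Next, I would show that for every $p\geq 1$ the only functions in $C^\alpha$ fixed by $\Qop^p$ are the constants, and that any unimodular eigenfunction of $\Qop$ is necessarily constant. For a real $\varphi\in C^\alpha$ with $\Qop^p\varphi=\varphi$, pick $x_0$ where $\varphi$ attains its maximum on $\Sigma$; from $\varphi(x_0)=\int\varphi\,dK_{x_0}^p$ and $\varphi\leq \varphi(x_0)$ we get $\varphi\equiv\varphi(x_0)$ on $\supp(K_{x_0}^p)$, and iterating along the semigroup $\{K^{pn}\}_{n\geq 0}$ gives $\varphi\equiv\varphi(x_0)$ on $\Omega_{K^p}(x_0)$. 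Hypothesis (2) places $\supp(\mu)$ inside this set, and the symmetric argument at a minimum forces $\inf_\Sigma\varphi=\sup_\Sigma\varphi$, so $\varphi$ is constant; for complex $\Qop^p$\-invariant functions one separates real and imaginary parts, which are independently $\Qop^p$\-invariant since $\Qop$ is real. Now for $\Qop\varphi=\lambda\varphi$ with $|\lambda|=1$ and $\varphi\neq 0$, Jensen gives $|\varphi|\leq \Qop|\varphi|$, and the same maximum argument produces $|\varphi|\equiv M$ on $\Omega_K(x_0)\supseteq\supp(\mu)$. The equality case of the triangle inequality then yields the phase\-propagation identity $\varphi(y)=\lambda^n\varphi(x_0)$ for $y\in\supp(K_{x_0}^n)$; applying hypothesis (2) for \emph{every} $p$ to the returns at a chosen $x_0\in\supp(\mu)$, together with continuity of $\varphi$, forces $\lambda$ to be a root of unity. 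Once $\lambda^p=1$ for some $p$, $\varphi$ is $\Qop^p$\-invariant hence constant by the first half of this step, and then $\lambda\varphi=\Qop\varphi=\varphi$ forces $\lambda=1$. Thus $G=\{1\}$ with eigenspace equal to the constants; $L_1\varphi$ is therefore a constant, and stationarity $\Qop^\ast\mu=\mu$ combined with $\int\Qop^{n_0 k}\varphi\,d\mu=\int\varphi\,d\mu$ and the convergence $\Qop^{n_0 k}\to L_1$ on the range of $L_1$ identifies that constant as $\int\varphi\,d\mu$.

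Finally, the decomposition collapses to $\Qop^{n_0}=L_1+N$ with $L_1 N=N L_1=0$ and $\norm{N^k}_{C^\alpha\to C^\alpha}\leq C\rho^k$ for some $\rho<1$, so $\Qop^{n_0 k}\varphi-\bigl(\int\varphi\,d\mu\bigr)\mathbf{1}=N^k\varphi$ and $\norm{\Qop^{n_0 k}\varphi-\int\varphi\,d\mu}_\infty\leq C\rho^k\norm{\varphi}_\alpha$; for general $n=n_0 k+r$ with $0\leq r<n_0$, applying the $C^0$\-contraction $\Qop^r$ to $\Qop^{n_0 k}\varphi-\int\varphi\,d\mu$ gives the stated estimate with $\sigma_0=\rho^{1/n_0}$ and a constant $C_0$ depending on $\norm{\varphi}_\alpha$. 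The hard part of the argument is pinning down the peripheral spectrum to $\{1\}$: the maximum principle alone handles the eigenvalue $1$ cleanly, but killing the remaining unimodular eigenvalues requires the full strength of hypothesis (2) holding simultaneously for every $p\geq 1$, which is the correct topological formulation of aperiodicity, and the phase\-propagation identity is what converts this into the algebraic conclusion $\lambda=1$.
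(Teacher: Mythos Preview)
Your outline follows the paper's proof almost verbatim: apply Ionescu--Tulcea--Marinescu with $\Escr=C^0(\Sigma)$, $\Escr_1=C^\alpha(\Sigma)$, then use a maximum principle driven by hypothesis~(2) to reduce the peripheral spectrum to $\{1\}$, and read off the exponential mixing from the spectral decomposition. Your treatment of $\Qop^p$-invariant functions is actually slightly cleaner than the paper's: by running the maximum argument at both a maximizer and a minimizer you get global constancy directly, whereas the paper's Lemma only yields constancy on $\supp(\mu)$ and then argues further.

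There is, however, a genuine gap at the step where you claim that ``applying hypothesis~(2) for every $p$ to the returns at a chosen $x_0\in\supp(\mu)$, together with continuity of $\varphi$, forces $\lambda$ to be a root of unity.'' Phase propagation gives $\varphi(y)=\lambda^n\varphi(x_0)$ for $y\in\supp(K_{x_0}^n)$, and hypothesis~(2) for each $p$ says that any $y\in\supp(\mu)$ is a limit of such points with $n$ a multiple of $p$; hence $\varphi(y)/\varphi(x_0)\in\bigcap_{p\geq 1}\overline{\{\lambda^{pn}:n\geq 0\}}$. But if $\lambda=e^{i\theta}$ with $\theta/\pi$ irrational, each set $\overline{\{\lambda^{pn}:n\geq 0\}}$ is the full circle, so the intersection imposes no constraint whatsoever. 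Nothing in this line of reasoning distinguishes roots of unity from irrational rotations.

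The paper closes this gap differently: from the phase identity $f_\lambda(y)=\lambda f_\lambda(x)$ for $y\in\supp(K_x)$ it computes $\Qop(f_\lambda f_{\lambda'})=(\lambda\lambda')\,f_\lambda f_{\lambda'}$, so $G$ is multiplicatively closed; being a finite subset of the unit circle closed under multiplication, $G$ is a finite group, and every element is a root of unity. Equivalently, your phase propagation already shows that $\varphi^k$ is an eigenfunction for $\lambda^k$, so $\{\lambda^k:k\geq 1\}\subseteq G$; since ITMT gives $G$ finite, $\lambda$ must have finite order. Insert this one-line observation and your argument goes through; the rest of your proof is correct.
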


	\begin{proof}
		Consider $(\Escr,\norm{\cdot})=(C^0(\Sigma), \norm{\cdot}_\infty)$ and $(\Escr_1,\norm{\cdot}_1)=(C^\alpha(\Sigma ), \norm{\cdot}_\alpha)$. Then the Markov operator
		$\Qop:C^0(\Sigma)\to C^0(\Sigma)$ satisfies the hypothesis of Theorem~\ref{ITMT} and, therefore, it is a quasi-compact operator.
		
		Notice that assumption (1)  of Theorem~\ref{ITMT}  holds because
		$$ \abs{(\Qop \varphi)(x)}
		\leq \int   \abs{\varphi(y)}\, dK_x(y) \leq \norm{\varphi}_\infty .$$
		If $\varphi_n\in  C^\alpha(\Sigma)$, $\varphi\in C^0(\Sigma)$, $\lim_{n\to \infty}\norm{\varphi_n-\varphi}_\infty= 0$  and $\norm{\varphi_n}_\alpha\leq C$ for all $n\in\N$ then
		taking the limit of the following inequalities
		$$ \frac{\abs{\varphi_n(x)-\varphi_n(y)}}{d(x,y)^\alpha}
		\leq C $$
		we get that $\norm{\varphi}_\alpha\leq C$. In particular $\varphi$ is $\alpha$-H\"older continuous.
		This proves assumption (2) of Theorem~\ref{ITMT}.
		The inclusion $i:C^\alpha(\Sigma)\to C^0(\Sigma)$ is a bounded linear operator because $\norm{\varphi}_\infty\leq \norm{\varphi}_\alpha$. It is a compact operator by Ascoli-Arzel\'a's Theorem. Therefore, assumption (3) of Theorem~\ref{ITMT} holds.
		Finally the assumption (4)  of Theorem~\ref{ITMT} matches hypothesis (3) above.

		\begin{lemma}
			\label{max value lemma} 
			Given $\varphi\in C^0(\Sigma)$, if $\varphi$ is real valued and  $\varphi\leq \Qop^p \varphi$, for some   $p\geq 1$, then $\varphi=c$ is constant   on $\supp(\mu)$  with $\varphi\leq c$ on $\Sigma$.
		\end{lemma}
		
		\begin{proof}
			We assume $p=1$ and  prove that
			if \, $\varphi\leq \Qop \varphi$ \, then \, $\varphi=c$ \, is constant on $\supp(\mu)$ with $\varphi\leq c$ on $\Sigma$. For this we use assumption (2), which for $p=1$ says that $\supp(\mu)\subseteq  \Omega_{K}(x)$ for all $x\in \Sigma$. 
			The general case reduces to this fact 
			applied to the operator
			$(\Qop^p \varphi)(x):=\int \varphi\, dK^p_x$,
			using instead that  by   (2) $ \supp(\mu)\subseteq \Omega_{K^p}(x) $.

			By the Weierstrass principle there are points $x_0 \in \Sigma$ such that $\varphi(x_0)\geq \varphi(x)$ for all $ x\in \Sigma$. Then 
			$$ c=\varphi(x_0)\leq (\Qop \varphi)(x_0)=\int \varphi\, dK_{x_0}   \leq c ,$$
			which implies that $\varphi(y)=c$ for all $y\in \supp(K_{x_0})$.
			Assume next (induction hypothesis) that  $\varphi(y)=c$
			for every $y\in \supp( K_{x_0}^n)$.
			Given $z\in  \supp( K_{x_0}^{n+1})$, we have $z\in \supp(K_{y})$ for some  $y\in  \supp( K_{x_0}^n)$. 
			By induction hypothesis, $\varphi(y)=c$, which by the previous argument implies that $\varphi(z)=c$. Hence, by induction,
			the function $\varphi$ is constant and equal to $c$ on $\Omega_{K}(x_0)$. Since by (1) $\supp(\mu)\subseteq \Omega_{K}(x_0)$, it follows that $\varphi=c$ on  $\supp(\mu)$. On the other hand the inequality
			$\varphi(x)\leq \varphi(x_0)=c$ holds for all $x\in\Sigma$,  by choice of $x_0$.
		\end{proof}

		The following argument uses the notion of peripheral spectrum of  $\Qop$. See H. Schaefer~\cite[Chap. V, $\S$4 and $\S$5]{Sc74}.	
		
		Let $G:= \mathrm{spec}(\Qop) \cap \{\lambda\in \C\colon \abs{\lambda}=1\}$. By Theorem~\ref{ITMT},
		$G$ is a finite set and there exists a 
		direct sum decomposition into closed $\Qop$-invariant  subspaces $C^\alpha(\Sigma)=F\oplus H$ such that
		$\dim (F)<\infty$,  $\mathrm{spec}(\Qop\vert_F)=G$ and 
		$\Qop\vert_H$ has spectral radius $<1$.

		For each $\lambda\in G$ choose a (complex) eigenfunction $f_\lambda\in C^\alpha(\Sigma)$ such that
		$\norm{f_\lambda}_\infty=1$ and $\Qop f_\lambda=\lambda\, f_\lambda$. Since $\abs{\lambda}=1$, for all $x\in \Sigma$,
		$$ \abs{f_\lambda(x)} = \abs{\lambda\, f_\lambda(x)}
		=\abs{(\Qop f_\lambda)(x)}\leq (\Qop \abs{f_\lambda})(x) ,$$
		which by Lemma~\ref{max value lemma}  implies that $\abs{f_\lambda}$ is constant  and equal to $c=\norm{f_\lambda}_\infty=1$ on $\supp(\mu)$.
		In particular, because $f_\lambda$ takes values in the unit circle  a convexity argument implies   $\lambda\,f_\lambda(x)=\Qop(f_\lambda)(x) = f_\lambda(y)$, for all $x \in \Sigma$ and $y\in \supp(K_x)$.
		Thus, given $\lambda, \lambda'\in G$ and  $x\in \Sigma$,
		\begin{align*}
		\Qop (f_\lambda\, f_{\lambda'})(x) &=   \int f_\lambda(y)\, f_{\lambda'} (y)\, dK_x(y) \\
		&=(\lambda\, f_\lambda(x))\,(\lambda' \, f_{\lambda'}(x))
		=(\lambda\, \lambda')\, (f_\lambda\, f_{\lambda'})(x) .
		\end{align*}
		This implies that $G$ is a group. In particular
		every eigenvalue $\lambda\in G$ is a root of unity.
		Assume now that there exists an eigenvalue $\lambda\in G$, $\lambda\neq 1$,
		with $\lambda^n=1$ and consider the corresponding eigenfunction  $f_\lambda$. Since $\lambda\neq 1$,  $f_\lambda$ can not be constant. On the other hand, because
		$\Qop f_\lambda=\lambda\, f_\lambda$, 
		$\Qop^nf_\lambda=\lambda^n\, f_\lambda=f_\lambda$,
		and applying Lemma~\ref{max value lemma} to the real and imaginary parts of $f_\lambda$ we conclude that $f_\lambda$ must be constant. Obviously $f_\lambda$ is constant on $\supp(\mu)$, but more can be said. First, arguing as above $f_\lambda(x)=f_\lambda(y)$ for all $y\in \supp(K_x)$.
		By induction,  $f_\lambda(x)=f_\lambda(y)$, for all $y\in \supp(K_x^n)$ and $n\in\N$. Finally, by assumption (2),
		$f_\lambda(x)=f_\lambda(y)$, for all $x\in\Sigma$ and $y\in \supp(\mu)$. This proves that $f_\lambda$ is globally constant.	
		This contradiction proves that $G=\{1\}$.
		Finally, Lemma~\ref{max value lemma} again implies that $\lambda=1$ is a simple eigenvalue.
		
		The previous considerations imply that
		$F=\{\text{constant functions}\}$ and
		$H= \{\varphi\in C^\alpha(\Sigma)\colon \smallint \varphi\, d\mu=0\}$, where the spectral radius of $\Qop\vert_H$ is $<1$. Hence there exist constants $C_0<\infty$ and  $0<\sigma_0<1$ such that  $\norm{\Qop^n \varphi}_\alpha \leq C_0\, \sigma_0^n \, \norm{\varphi}_\alpha$
		for all $n\in\N$ and $\varphi\in H$.
		
		Finally, given $\varphi\in C^\alpha(\Sigma)$, 
		\begin{align*}
		\norm{\Qop^n\varphi -\smallint \varphi\, d\mu}_\infty &\leq  \norm{\Qop^n\varphi -\smallint \varphi\, d\mu}_\alpha  
		= \norm{\Qop^n( \varphi -\smallint \varphi\, d\mu )}_\alpha\\ 
		&\leq C_0\, \sigma_0^n\, \norm{\varphi-\smallint \varphi\, d\mu}_\alpha \leq C_0\left(\norm{\varphi}_\alpha +\smallint \vert\varphi\vert  \, d\mu\right)\, \sigma_0^n .
		\end{align*}
	\end{proof}


	\section{Base Strong Mixing}
	\label{basemixing}
	In this section we establish the strong mixing of the Markov operator associated to the base dynamics, and as a consequence, we prove a base large deviations theorem.

	Let  $X^-:=\Sigma^{-\N\cup \{0\}}$.
	Given $x^-\in X^-$, we denote by $(x^-,i)$   the sequence obtained shifting $x^-$  one place to the left and inserting the symbol $i$ at position $0$. This operation is an inverse branch of the right shift   $T_-$ on $X^-$.

	We make the interpretation $X^-\equiv X/{\Wuloc}$, so that each point $x^-\in X^-$ represents the local unstable manifold
	$$ W^u_{x^-}:=\{ x\in X :  x^-_j=x_j,\, \forall j\leq 0  \}  $$
	of any point $x\in W^u_{x^-}$, i.e., $W^u_{x^-}=\Wuloc(x)$ $\forall \, x\in W^u_{x^-}$.  From now on we will write  $W^u_{loc}(x^-)$ instead of $W^u_{x^-}$.

	The $(d-1)$-dimensional simplex in $\R^d$ is denoted by
	$$\Delta^{d-1}:=\{(p_1,\ldots, p_d)\colon p_j\geq 0, \, \sum_{j=1}^dp_j=1 \,\} . $$
	
	The measure $\mu$ determines  
	$p:X^-\to \Delta^{d-1}$ with components $p=(p_1,\ldots, p_d)$
	defined by
	$$p_i(x^-):= \muxminus (W^u_{x^-}\cap T^{-1} W^u_{(x^-,i)})  $$ 
	where $\{\muxminus\}_{x^-\in X^-}$ stands for the disintegration of $\mu$
	over the partition $\Pscr^u:=\{ W^u_{x^-} : x^-\in X^-\}$.

	\begin{definition}
		\label{admissible sequences}
		The set of admissible sequences in $X$ is
		$$ \mathscr{A} :=\cap_{n\in\Z} T^{-n}\{x\in X\colon p_{x_1}(x^-)>0 \} .$$
		Similarly, the set of admissible sequences in $X^-$ is
		$$ \mathscr{A}^- :=\cap_{n\geq 0} (T_-)^{-n}\{x\in X\colon p_{x_0}(T_-(x))>0 \} $$
		where $T_-:X^-\to X^-$ denotes the right shift.
	\end{definition}
	
	\begin{remark}
		$P^-(\mathscr{A})\subseteq \mathscr{A}^-$. 
	\end{remark}

	\begin{proposition}
		\label{prop admissible}
		The sets of admissible sequences $\mathscr{A}$ and $\mathscr{A}^-$ have
		full measure w.r.t. $\mu$ and $\mu^-:=(P^-)_\ast\mu$, respectively.
	\end{proposition}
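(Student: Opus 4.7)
My plan is to reduce the statement to a single computation: that the conditional probabilities $p_i(x^-)$ really are probabilities in Rokhlin's sense, after which the claim becomes a routine application of countable intersection of full-measure sets combined with the invariance of $\mu$ and $\mu^-$. I will first dispose of $\mathscr{A}$, then transfer the result to $\mathscr{A}^-$ via the semiconjugacy $T_-\circ P^-=P^-\circ T^{-1}$.

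The core identification is $W^u_{x^-}\cap T^{-1}W^u_{(x^-,i)}=\{y\in W^u_{x^-}\colon y_1=i\}$, which follows from unravelling the definition of $(x^-,i)$ as an inverse branch of $T_-$: the constraint $T(y)\in W^u_{(x^-,i)}$ forces $y_1=i$ together with $y_k=x^-_k$ for all $k\leq 0$. Consequently $\{p_i(x^-)\}_{i=1}^\ell$ is a partition of unity on each fibre, so each $p_i(x^-)$ is a genuine probability. Writing $B:=\{x\in X\colon p_{x_1}(x^-)=0\}$, the disintegration $\mu=\int \mu^u_{x^-}\,d\mu^-(x^-)$ sliced by the value of $y_1$ gives
\[
\mu(B)=\int_{X^-}\sum_{i:\,p_i(x^-)=0} p_i(x^-)\,d\mu^-(x^-)=0.
\]
Since $\mu$ is $T$-invariant, each $T^{-n}(X\setminus B)$ has full $\mu$-measure, and the countable intersection $\mathscr{A}$ does too.

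For $\mathscr{A}^-$, I will first observe that $T_-\circ P^-=P^-\circ T^{-1}$ implies $(T_-)_\ast\mu^-=\mu^-$. Then pulling back via $P^-$ and substituting $y=T^{-1}(x)$,
\[
\mu^-\bigl\{x^-\colon p_{x^-_0}(T_-(x^-))=0\bigr\}=\mu\bigl\{x\colon p_{x_0}(P^-(T^{-1}(x)))=0\bigr\}=\mu\bigl\{y\colon p_{y_1}(P^-(y))=0\bigr\}=\mu(B)=0,
\]
so the analogous one-step bad set is $\mu^-$-null; applying $T_-$-invariance of $\mu^-$ and taking the countable intersection yields $\mu^-(\mathscr{A}^-)=1$.

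The main obstacle is really bookkeeping: pinning down the indexing conventions for $(x^-,i)$, $T_-$ and $P^-$ so that the fibre-slice $\{y_1=i\}$ inside $W^u_{x^-}$ has $\mu^u_{x^-}$-measure exactly $p_i(x^-)$. Once this identification is made, the proposition follows directly from the total-probability identity $\sum_i p_i(x^-)=1$ together with the invariance of $\mu$ and $\mu^-$.
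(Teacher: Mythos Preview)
Your argument is correct and is precisely the kind of elaboration the paper has in mind: the paper's own proof consists of the single word ``Straightforward.'' Your key identification $W^u_{x^-}\cap T^{-1}W^u_{(x^-,i)}=\{y\in W^u_{x^-}\colon y_1=i\}$ together with the Rokhlin disintegration gives $\mu(B)=0$ exactly as you wrote, and the passage to $\mathscr{A}^-$ via the semiconjugacy $T_-\circ P^-=P^-\circ T^{-1}$ and the $T_-$-invariance of $\mu^-$ is clean and correct.
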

	
	\begin{proof}
		Straightforward.
	\end{proof}

	\begin{proposition}
		\label{p alpha Holder}
		The assumptions on $\mu$ imply that 
		$p:X^-\to \Delta^{d-1}$ is $\alpha$-H\"older continuous function for some $0<\alpha\leq 1$.
	\end{proposition}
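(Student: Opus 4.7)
The plan is to write $p_i(x^-)$ explicitly as a ratio of integrals of the H\"older density $\rho$ coming from the local product structure, and then deduce H\"older continuity by elementary manipulations.

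First, I would unwrap the set whose measure defines $p_i(x^-)$. Since $W^u_{x^-} = \{y \in X : y_j = x^-_j \text{ for all } j \leq 0\}$, and since $(x^-,i)$ has $0$-th coordinate $i$ and coordinates $x^-_{j+1}$ for $j \leq -1$, a direct unwrapping shows that
$$W^u_{x^-} \cap T^{-1} W^u_{(x^-,i)} = \{y \in W^u_{\mathrm{loc}}(x^-) : y_1 = i\}.$$
In particular this set lies inside the cylinder $[x^-_0]$, so we can work entirely within that cylinder and invoke Definition~\ref{def local product measure}.

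Next, fix the cylinder $[x^-_0]$ and use the identification $h$ of Definition~\ref{def local product measure} to identify $W^u_{\mathrm{loc}}(x^-)$ with $\{x^-\} \times X^+$. Under $h_\ast \mu_{[x^-_0]} = \rho\cdot(\mu^-_{x^-_0}\times \mu^+_{x^-_0})$, Rokhlin's disintegration over the first factor yields that the conditional measure on the fiber over $x^-$ is proportional to $\rho(x^-,\cdot)\, \mu^+_{x^-_0}$. Normalizing, we get
$$\mu^u_{x^-}(B) = \frac{\int_B \rho(x^-, y^+)\, d\mu^+_{x^-_0}(y^+)}{\int_{X^+} \rho(x^-, y^+)\, d\mu^+_{x^-_0}(y^+)},$$
so
$$p_i(x^-) = \frac{N_i(x^-)}{D(x^-)}, \qquad N_i(x^-) := \int_{\{y^+_1 = i\}} \rho(x^-, y^+)\, d\mu^+_{x^-_0}(y^+),$$
with $D(x^-)$ the analogous integral over all of $X^+$.

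Finally I would estimate H\"older differences. When $x^-_0 = y^-_0$, both measures $\mu^+_{x^-_0}$ and $\mu^+_{y^-_0}$ coincide, so the $\alpha$-H\"older regularity of $\rho$ transfers to $N_i$ and $D$ by pulling the difference inside the integral. Since $\rho$ is continuous and strictly positive on a compact set, $D$ is bounded away from $0$ by some $\delta > 0$; hence the ratio $N_i/D$ is $\alpha$-H\"older with a constant depending only on $\|\rho\|_\alpha$ and $\delta$. When $x^-_0 \neq y^-_0$, the distance on $X^-$ satisfies $d(x^-, y^-) \geq 1$, and since $p_i$ takes values in $[0,1]$, the bound $|p_i(x^-) - p_i(y^-)| \leq 2 \leq 2\, d(x^-, y^-)^\alpha$ is trivial. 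Combining both cases yields the desired $\alpha$-H\"older continuity of $p : X^- \to \Delta^{d-1}$, with the exponent $\alpha$ inherited directly from the one for~$\rho$.

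The only mildly delicate step is the explicit disintegration formula; everything else is a routine estimate, so I expect no serious obstacle beyond verifying that the normalization $D(x^-)$ is uniformly positive, which follows immediately from positivity and continuity of $\rho$ on the compact space $X^- \times X^+$.
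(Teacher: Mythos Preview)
Your proposal is correct and follows essentially the same route as the paper: both arguments use the local product structure to express the conditional measure $\mu^u_{x^-}$ on $W^u_{\mathrm{loc}}(x^-)$ in terms of the H\"older density $\rho$, and then read off the H\"older regularity of $p_i$ from that of $\rho$. The only cosmetic difference is that the paper takes the disintegration in the unnormalized form $\mu^u_{x^-}=\rho(x^-,\cdot)\,\mu_i^+$ (verified directly against $\mu^-$), obtaining $p_i(x^-)=\int \rho(x^-,x^+)\,d\mu_i^+(x^+)$ as a single integral, whereas you write $p_i=N_i/D$ with an explicit normalization and then bound the ratio; your version is slightly more careful about the fact that Rokhlin conditionals are probabilities, but the two computations are equivalent and the concluding estimate is the same.
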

	
	\begin{proof}
		With the notation of Definition~\ref{def local product measure}
		we claim that on the cylinder $[i]:=\{ y \in X  \colon y_0=i\}$, 
		$\muxminus=\rho(x^-,\cdot)\, \mu_i^+$.
		For this we need to check that
		the family of measures $\muxminus:=\sum_{i=1}^\ell\rho(x^-,\cdot)\, \mu_i^+$  satisfies for any bounded  observable $\varphi:X\to \R$ 
		\begin{align*}
		\int \varphi\, d\mu &= \sum_{i=1}^\ell \int \varphi\, d\mu_{[i]}    =\sum_{i=1}^\ell \int \varphi(x^-, x^+)\, d(h_\ast\mu_{[i]})(x^-, x^+) \\
		&= \sum_{i=1}^\ell \int \int \varphi(x^-, x^+)\,\rho(x^-,x^+) d\mu^-_i(x^-)\, d\mu^+_i(x^+)\\
		&= \sum_{i=1}^\ell \int \int \varphi(x^-, x^+)\, d(\rho(x^-,\cdot ) \,\mu^+_i)(x^+)\, d\mu^-_i(x^-)\\
		&= \sum_{i=1}^\ell \int \int \varphi(x^-, x^+)\, d\muxminus(x^+)\, d\mu^-_i(x^-)\\
		&= \sum_{i=1}^\ell \int\left( \int \varphi\, d\muxminus\right) \, d\mu^-_i(x^-)\\
		&=  \int\left( \int \varphi\, d\muxminus \right) \, d\mu^- (x^-) 
		\end{align*}
		the last equality  because  $\mu^-=
		(P_-)_\ast \left(\sum_{i=1}^\ell \mu_{[i]}  \right) = \sum_{i=1}^\ell \mu^-_i$. Thus since
		\begin{align*} p_i(x^-) &=\muxminus(W^u_{x^-}\cap T^{-1} W^u_{(x^-,i)}) =\muxminus( [i])   \\
		&= \int \rho(x^-, x^+)\, d\mu^+_i(x^+)
		\end{align*}
		and $\rho$ is H\"older continuous, it follows  that $p_i:X^-\to [0,+\infty) $ is  H\"older continuous for all $i\in\{1,\ldots, \ell\}$.
	\end{proof}

	The space of sequences $X^-$ is a compact metric space when endowed with the distance $d:X^-\times X^-\to \R$, $d(x^-,y^-):= 2^{\kappa(x^-, y^-)}$ where
	$$ \kappa(x^-,y^-):= \min \{ n\in\N \colon \, x^-_{-n}\neq y^-_{-n}\, \} , $$
	with the convention that $\min\emptyset=-\infty$.
	We define the SDS
	\begin{equation}
	\label{base SDS}
	K:X^-\to \Prob(X^-)\quad \text{ by }\quad K_{x^-}:= \sum_{j=1}^\ell p_i(x^-)\, \delta_{(x^-,i)} 
	\end{equation}
	which in turn determines the Markov  operator
	$\Qop:C^0(X^-)\to C^0(X^-)$,
	\begin{equation}
	\label{base Qop}
	(\Qop \varphi)(x^-)= \sum_{i=1}^\ell p_i(x^-)\, \varphi((x^-,i)).
	\end{equation}

	\begin{proposition}
		\label{mus, stationary measure}
		$\Qop^\ast \mu^-=\mu^-$ is a stationary measure. 
	\end{proposition}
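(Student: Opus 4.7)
The plan is to verify directly that $\int \Qop\varphi\, d\mu^- = \int \varphi\, d\mu^-$ for every $\varphi \in C^0(X^-)$, by pulling the identity back to $X$ via $P^-$, exploiting the $T$-invariance of $\mu$, and then disintegrating along the unstable partition $\Pscr^u$.

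\smallskip

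\textbf{Step 1 (lift and use $T$-invariance).} Starting from $\mu^-=(P^-)_\ast\mu$, I would write
\[
\int_{X^-}\varphi\, d\mu^- \;=\; \int_X \varphi\circ P^-\, d\mu \;=\; \int_X \varphi\circ P^-\circ T\, d\mu,
\]
where the second equality is the $T$-invariance of $\mu$.

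\smallskip

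\textbf{Step 2 (identify $P^-\circ T$).} A direct inspection of coordinates shows that for every $x\in X$, $P^-(Tx)$ is the sequence obtained from $P^-(x)$ by shifting one position to the left and inserting the symbol $x_1$ at position $0$; in the notation introduced before Definition~\ref{admissible sequences}, this is exactly $(P^-(x),x_1)$. Hence the integrand depends on $x$ only through $P^-(x)\in X^-$ and the single symbol $x_1\in\{1,\dots,\ell\}$.

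\smallskip

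\textbf{Step 3 (disintegrate along $\Pscr^u$).} Using the Rokhlin disintegration $\mu=\int \muxminus\, d\mu^-(x^-)$ over the partition $\Pscr^u=\{W^u_{x^-}:x^-\in X^-\}$, the previous integral becomes
\[
\int_{X^-}\left(\int_{W^u_{x^-}} \varphi\bigl(x^-,y_1\bigr)\, d\muxminus(y)\right) d\mu^-(x^-).
\]
Since $y_1$ only ranges over $\{1,\dots,\ell\}$, the inner integral splits as
\[
\sum_{i=1}^{\ell} \varphi\bigl((x^-,i)\bigr)\; \muxminus\bigl(\{y\in W^u_{x^-}:y_1=i\}\bigr).
\]

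\smallskip

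\textbf{Step 4 (recognise $p_i$ and conclude).} A short symbol-pushing computation gives
\[
\{y\in W^u_{x^-}:y_1=i\} \;=\; W^u_{x^-}\cap T^{-1}W^u_{(x^-,i)},
\]
so the weight in the sum is precisely $p_i(x^-)$ by its definition. The inner sum is therefore $(\Qop\varphi)(x^-)$, and assembling the pieces yields
\[
\int_{X^-}\varphi\, d\mu^- \;=\; \int_{X^-}\Qop\varphi\, d\mu^-,
\]
which is the duality $\Qop^\ast\mu^-=\mu^-$.

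\smallskip

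There is essentially no obstacle beyond bookkeeping: the one delicate point is the coordinate identification in Steps~2 and~4, where one must keep straight that inserting $i$ at position $0$ in $(x^-,i)$ corresponds on $X$ to the condition $y_1=i$ after applying $T$, i.e., that $(x^-,i)$ parametrises the $T$-images of the unstable set $W^u_{x^-}$ by the first forward symbol.
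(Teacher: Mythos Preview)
Your proof is correct and is essentially the same argument as the paper's, carried out in the reverse direction: the paper starts from $\int \Qop\varphi\, d\mu^-$, expands via $p_i(x^-)=\muxminus(\{y_1=i\})$, reassembles the integral over $X$ as $\int \varphi\circ P_-\circ T\, d\mu$, and then invokes $T$-invariance to reach $\int \varphi\, d\mu^-$, whereas you begin with $T$-invariance and disintegrate afterwards. The key ingredients (the coordinate identity $P^-\circ T(x)=(P^-(x),x_1)$, the disintegration along $\Pscr^u$, and the recognition of $p_i$) are identical in both.
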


	\begin{proof}
		Using the notation in the proof of Proposition~\ref{p alpha Holder}, for any $\varphi\in C^0(X^-)$,
		\begin{align*}
		\int \varphi\, d\Qop^\ast \mu^-  &= \int (\Qop \varphi) \, d\mu^-  = \sum_{i=1}^\ell \int p_i(x^-)\, \varphi((x^-, i))\, d\mu^-(x^-) \\
		&= \sum_{i=1}^\ell  \int  \varphi((x^-, i))\, \muxminus([i])\, d\mu^-(x^-) \\
		&= \sum_{i=1}^\ell  \int \left(  \int \varphi((x^-, i))\, \ind_{[i]}(y^+) \, d\muxminus(y^+)  \right)\,  d\mu^-(x^-) \\
		&=  \iiint   \varphi((x^-, y_1))\, d\mu(x^-,y_1, y^+) \\
		&= \int \varphi\circ P_-\, d\mu = \int \varphi\, d\mu^- ,
		\end{align*}
		which implies that $\Qop^\ast\mu^-= \mu^-$.
	\end{proof}

	We introduce a couple of seminorms and norms on  $C^0(X^-)$. 
	For each $k\in\N$ define
	$v_k:C^0(X^-)\to [0,\infty]$,
	$$ v_k(\varphi):= \sup\{ \abs{\varphi(x^-)-\varphi(y^-)} \colon x^-,y^-\in X^- \, \text{ s.t. }\, x^-_{j}=y^-_{j},\, \forall\, j \ge -k \, \}. $$
	
	Given $0<\alpha\leq 1$, define $v_\alpha:C^0(X^-)\to [0,\infty]$,
	$$ v_\alpha(\varphi):= \sup\{2^{k\,\alpha}\,v_k(\varphi)  \colon k\in\Z,\, k\geq 0 \, \}. $$
	This function is a seminorm, also characterized by
	$$ v_\alpha(\varphi)=\sup\left\{ \frac{\abs{\varphi(x^-)-\varphi(y^-)}}{d(x^-,y^-)^\alpha} \colon x^-, y^-\in X^-, \, x^-\neq y^- \right\} . $$
	
	Denote by $C^\alpha(X^-)$ the Banach space of $\alpha$-H\"older continuous functions $\varphi:X^-\to \C$
	endowed with the norm
	$$ \norm{\varphi}_\alpha:= v_\alpha(\varphi)+\norm{\varphi}_\infty . $$
	The space $C^\alpha(X^-)$ with this norm is  a Banach algebra, which means that $\norm{1}_\alpha=1$ and 
	$\norm{\varphi\,\psi}_\alpha\leq \norm{\varphi}_\alpha\, \norm{\psi}_\alpha$.

	\begin{proposition}
		\label{v alpha Qop varphi ineq }
		Choose $0<\alpha\leq 1$ according to Proposition~\ref{p alpha Holder}. Then for any $\varphi\in C^\alpha(X^-)$,
		$$ v_\alpha(\Qop^n \varphi) \leq 2^{-n\,\alpha}\, v_\alpha(\varphi) + \frac{v_\alpha(p)}{1-2^{-\alpha}}\, \norm{\varphi}_\infty .$$
	\end{proposition}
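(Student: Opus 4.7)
The plan is to prove the inequality first for $n=1$, obtaining the one-step contraction $v_\alpha(\Qop\varphi)\leq 2^{-\alpha}\,v_\alpha(\varphi)+v_\alpha(p)\,\norm{\varphi}_\infty$, and then iterate it and sum a geometric series. The key geometric observation that drives the contraction is that the concatenation operation $x^-\mapsto (x^-,i)$ introduces exactly one extra matching symbol at position $0$: if $x^-_{-j}=y^-_{-j}$ for all $0\leq j\leq k$, then $(x^-,i)_{-j}=(y^-,i)_{-j}$ for all $0\leq j\leq k+1$. In the metric $d$ on $X^-$, this is the statement that $(\cdot,i)$ is a $2^{-1}$-contraction, which will yield the $2^{-\alpha}$ factor.

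For the one-step inequality, I would fix $k\geq 0$ and two points $x^-,y^-\in X^-$ with $x^-_{-j}=y^-_{-j}$ for $0\leq j\leq k$, and split
\begin{align*}
(\Qop\varphi)(x^-)-(\Qop\varphi)(y^-) &= \sum_{i=1}^\ell p_i(x^-)\bigl[\varphi((x^-,i))-\varphi((y^-,i))\bigr]\\
&\quad+\sum_{i=1}^\ell \bigl[p_i(x^-)-p_i(y^-)\bigr]\,\varphi((y^-,i)).
\end{align*}
By the previous paragraph, the first sum is bounded in absolute value by $v_{k+1}(\varphi)\cdot\sum_i p_i(x^-)=v_{k+1}(\varphi)$, since $p$ takes values in $\Delta^{d-1}$. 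The second sum is bounded by $\norm{\varphi}_\infty\cdot\sum_i\abs{p_i(x^-)-p_i(y^-)}$, which in turn is $\leq v_k(p)\,\norm{\varphi}_\infty$ once one interprets $v_k(p)$ via the natural $\ell^1$-norm on the simplex (compatible with $v_\alpha(p)$). Taking the supremum over admissible pairs gives $v_k(\Qop\varphi)\leq v_{k+1}(\varphi)+v_k(p)\,\norm{\varphi}_\infty$.

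Multiplying by $2^{k\alpha}$ and using the definition of $v_\alpha$, the factor $2^{k\alpha}v_{k+1}(\varphi)=2^{-\alpha}\cdot 2^{(k+1)\alpha}v_{k+1}(\varphi)\leq 2^{-\alpha}v_\alpha(\varphi)$, and similarly $2^{k\alpha}v_k(p)\leq v_\alpha(p)$. Taking the supremum over $k\geq 0$ yields
\[
v_\alpha(\Qop\varphi)\leq 2^{-\alpha}\,v_\alpha(\varphi)+v_\alpha(p)\,\norm{\varphi}_\infty.
\]
Finally, I iterate this inequality $n$ times, using the obvious bound $\norm{\Qop^j\varphi}_\infty\leq\norm{\varphi}_\infty$ coming from the fact that $\Qop$ is a Markov operator. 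This produces
\[
v_\alpha(\Qop^n\varphi)\leq 2^{-n\alpha}\,v_\alpha(\varphi)+v_\alpha(p)\,\norm{\varphi}_\infty\sum_{j=0}^{n-1}2^{-j\alpha},
\]
and the geometric sum is bounded by $1/(1-2^{-\alpha})$, giving exactly the claimed inequality.

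The argument is essentially bookkeeping once the contraction property of the concatenation map is identified, so there is no real obstacle; the only point that requires a little care is fixing the convention under which $v_\alpha(p)$ dominates $\sum_i\abs{p_i(x^-)-p_i(y^-)}$, but this is just a choice of norm on $\R^\ell$ and is transparent from Proposition~\ref{p alpha Holder}.
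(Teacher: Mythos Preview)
Your proof is correct and follows essentially the same approach as the paper: the same splitting of $(\Qop\varphi)(x^-)-(\Qop\varphi)(y^-)$ into the two sums, the same one-step bound $v_k(\Qop\varphi)\leq v_{k+1}(\varphi)+v_k(p)\,\norm{\varphi}_\infty$, the same multiplication by $2^{k\alpha}$ to pass to $v_\alpha$, and the same inductive iteration summing the geometric series. The minor caveat you flag about the norm convention for $v_\alpha(p)$ is indeed the only point requiring care, and the paper handles it implicitly in the same way.
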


	\begin{proof}
		Assume $x^-, y^-\in X^-$ with $x^-_{j}=y^-_{j},\, \forall\, j \geq -k $. Then for any $i\in \{1,\ldots, \ell\}$ the first coordinates of the sequences
		$(x^-,i)$ and $(y^-,i)$ match up to order $k+1$. Hence
		\begin{align*}
		\abs{(\Qop \varphi)(x^-) - (\Qop \varphi)(y^-) } 
		&\leq  \sum_{i=1}^\ell \abs{ p_i(x^-)\, \varphi((x^-,i)) -   p_i(y^-)\, \varphi((y^-,i)) }\\
		&\leq  \sum_{i=1}^\ell p_i(x^-)\, \abs{ \varphi((x^-,i)) -    \varphi((y^-,i)) } \\
		&\qquad +  \sum_{i=1}^\ell \abs{p_i(x^-)-p_i(y^-)} \,  \abs{\varphi((y^-,i)) }\\
		&\leq v_{k+1}(\varphi) + v_k(p)\, \norm{\varphi}_\infty .
		\end{align*}
		Thus, taking the supremum in $x^-$ and $y^-$ as above we get
		$$  v_k(\Qop \varphi) \leq v_{k+1}(\varphi) + v_k(p)\, \norm{\varphi}_\infty .$$
		Given $k\in \N$ we now have
		\begin{align*}
		2^{k\,\alpha}\, v_k(\Qop \varphi) &\leq 
		2^{k\,\alpha}\, v_{k+1}(\varphi) + 2^{k\,\alpha}\, v_k(p)\, \norm{\varphi}_\infty\\
		&\leq 
		2^{-\alpha}\, v_{\alpha}(\varphi) +  v_\alpha(p)\, \norm{\varphi}_\infty .
		\end{align*}
		Hence taking the supremum in $k$
		$$ v_\alpha(\Qop \varphi)\leq 
		2^{-\alpha}\, v_{\alpha}(\varphi) +  v_\alpha(p)\, \norm{\varphi}_\infty . $$
		Finally by induction we get
		\begin{align*}
		v_\alpha(\Qop^n \varphi) &\leq 
		2^{-n\,\alpha}\, v_{\alpha}(\varphi) + (1+2^{-\alpha}+\cdots +2^{-(n-1)\alpha} )\, v_\alpha(p)\, \norm{\varphi}_\infty\\
		&\leq 
		2^{-n\alpha}\, v_{\alpha}(\varphi) +  \frac{v_\alpha(p)}{1-2^{-\alpha}}\, \norm{\varphi}_\infty .
		\end{align*}
	\end{proof}

	The operator $\Qop$ is strongly mixing on $C^\alpha(X^-)$.

	\begin{proposition}
		\label{Qop quasi-compact}
		Given  $0<\alpha\leq 1$ according to Proposition~\ref{p alpha Holder}, the Markov operator $\Qop$ is strongly mixing on the space $C^\alpha(X^-)$, i.e.,   there exists   $0<\sigma_0<1$
		such that for any $\varphi\in C^\alpha(X^-)$ and some $C_0=C_0(\norm{\varphi}_\alpha)<\infty$, 
		$$ \norm{ \Qop^n \varphi-\smallint \varphi\, d\mu^- }_\infty \leq C_0\, \sigma_0^n  \qquad \forall \, n\in\N .$$
	\end{proposition}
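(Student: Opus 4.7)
The plan is to apply Theorem~\ref{base strong mixing} to the Markov operator $\Qop$ defined in \eqref{base Qop} on $C^\alpha(X^-)$ by checking its three hypotheses. The setup is already tailored for this: $\mu^-$ plays the role of the stationary measure and $\alpha$ is fixed by Proposition~\ref{p alpha Holder}.

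Hypothesis~(1), stationarity of $\mu^-$, is precisely Proposition~\ref{mus, stationary measure}. Hypothesis~(3), the Lasota--Yorke type contraction $v_\alpha(\Qop^{n_0} \varphi)\leq \sigma^{n_0}\, v_\alpha(\varphi)+C\,\norm{\varphi}_\infty$, is supplied by Proposition~\ref{v alpha Qop varphi ineq } with the choice $\sigma := 2^{-\alpha}<1$ and $C := v_\alpha(p)/(1-2^{-\alpha})$, valid for every $n_0\in\N$.

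The only nontrivial point is Hypothesis~(2), namely $\supp(\mu^-)\subseteq \Omega_{K^p}(x^-)$ for every $p\in\N$ and every $x^-\in X^-$. By construction of $K$, the iterate $K^n_{x^-}$ is supported on the finite collection of admissible extensions $(x^-,i_1,\ldots, i_n)$ whose transition weights $p_{i_1}(x^-)\, p_{i_2}((x^-,i_1)) \cdots $ are all positive. As $n$ grows, the original coordinates of $x^-$ are pushed to positions $\leq -n$, so in the metric $d(x^-,y^-)= 2^{\kappa(x^-,y^-)}$ these iterates lie within $2^{-n}$ of any sequence $y^-$ whose block $(y^-_{-n+1},\ldots, y^-_{0})$ can be realized by a chain of admissible transitions starting from $x^-$. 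Thus $\Omega_K(x^-)$ contains the set of admissible sequences $\mathscr{A}^-$, which by Proposition~\ref{prop admissible} has full $\mu^-$-measure; since $\Omega_K(x^-)$ is closed, this gives $\supp(\mu^-)\subseteq \Omega_K(x^-)$.

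The main obstacle will be upgrading this from $p=1$ to every $p\in\N$: one must verify that arbitrarily long blocks of prescribed admissible symbols can be reached by admissible chains whose length is a multiple of $p$. This reduces to topological mixing of $(T_-)^p$ restricted to the admissible set, which in the present context follows from the standard theory of equilibrium states for H\"older continuous potentials over transitive subshifts of finite type (so that the support of $\mu^-$ coincides with the whole admissible set and the dynamics is mixing in every residue class mod $p$). Once all three hypotheses are in place, Theorem~\ref{base strong mixing} delivers the desired exponential decay $\norm{\Qop^n \varphi-\smallint \varphi\, d\mu^-}_\infty \leq C_0\,\sigma_0^n$ with constants $C_0=C_0(\norm{\varphi}_\alpha)$ and $\sigma_0\in(0,1)$.
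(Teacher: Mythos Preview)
Your proposal is correct and follows essentially the same route as the paper: apply Theorem~\ref{base strong mixing}, with hypothesis~(1) from Proposition~\ref{mus, stationary measure}, hypothesis~(3) from Proposition~\ref{v alpha Qop varphi ineq }, and hypothesis~(2) via the density of admissible continuations. The one place where the paper is a bit sharper is the justification of hypothesis~(2): for $p=1$ the paper invokes the ergodicity of $(T_-,\mu^-)$ to pass from ``sequences reachable from $x^-$ by admissible transitions'' to ``all admissible sequences'' (your sentence ``Thus $\Omega_K(x^-)$ contains $\mathscr{A}^-$'' skips this step), and for general $p$ the paper argues directly that $(T_-,\mu^-)$ mixing forces $(T_-^p,\mu^-)$ ergodic and then regards $T_-^p$ as a one-step shift on the alphabet $\{1,\ldots,\ell\}^p$, whereas you fall back on the theory of equilibrium states---valid for the applications but formally narrower than the standing local-product-structure hypothesis.
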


	\begin{proof}
		Consider the Banach spaces
		$$(\Escr,\norm{\cdot})=(C(X^-), \norm{\cdot}_\infty)\;  \text{and } \; (\Escr_1,\norm{\cdot}_1)=(C^\alpha(X^-), \norm{\cdot}_\alpha) .$$ 
		Since the Markov operator
		$\Qop:C(X^-)\to C(X^-)$ satisfies the assumptions of Theorem~\ref{ITMT} it is a quasi-compact operator.
		The strong mixing property will follow from Theorem~\ref{base strong mixing} and we are reduced to check the hypothesis of this theorem. Notice that Proposition~\ref{mus, stationary measure} implies hypothesis (1), while Proposition~\ref{v alpha Qop varphi ineq } implies hypothesis (3) of Theorem~\ref{base strong mixing}.
		We are left to check hypothesis (2).
		For any $x^-\in X^-$ the  set $\Omega_K(x^-)$ contains
		$$\mathscr{A}(x^-):= \{ (x^-, i_1, \ldots , i_{n} ) \colon   n\in\N, \; \forall \,  k\leq n \quad  p_{i_k}(x^-, i_1, \ldots , i_{k-1} ) >0   \}  $$
		which by the ergodicity of $(T_-,\mu^-)$ is dense in the full measure set of all admissible sequences (see Proposition~\ref{prop admissible}). Therefore, $\supp(\mu^-)\subseteq  \Omega_K(x^-)$. To check assumption (2) of Theorem~\ref{base strong mixing} we still need to prove that
		$\supp(\mu^-)\subseteq  \Omega_{K^p}(x^-)$ for every $p\in\N$.
		This follows because $(T_-,\mu^-)$ is mixing and $(T_-^p,\mu^-)$ is ergodic. The expanding map $T_-^p:X^-\to X^-$ can be regarded as a one-step right shift on the space of sequences in the finite alphabet
		$\{1,\ldots, \ell\}^p$, while $\mu^-$ is a $T^p$-invariant probability  measure on $X^-$ with local product structure.
		Therefore  the same argument above proves that $\supp(\mu^-)\subseteq \Omega_{K^p}(x^-)$.
	\end{proof}

	\begin{theorem}
		\label{base ldt}
		Take the H\"older exponent  $\alpha>0$ according to Proposition~\ref{p alpha Holder}. 
		Then there exist constants $C<\infty$ and $k>0$   such that
		for all\,  $\psi \in C^\alpha(X)$ with  $\norm{\psi}_\alpha\leq L$, $0<\varepsilon<1$  and  $n\in\N$,  
		$$  \mu \left\{ x \in X  \colon \, \abs{\frac{1}{n}\,\sum_{j=0}^{n-1} \psi(T^j x) - \int \psi\, d\mu } > \varepsilon \,\right\} \leq C\,e^{ - k\, L^{-2}\, \varepsilon^{2}\, n  } \;. $$
	\end{theorem}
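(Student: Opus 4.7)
The plan is to derive this finitary LDT by a Chernoff-type argument applied to exponential moments that are controlled through the quasi-compactness of the Markov operator $\Qop$ from Proposition~\ref{Qop quasi-compact}; this is a quantitative variant of the Nagaev--Guivarc'h method. After the rescaling $\psi\mapsto \psi/L$ one may assume $L=1$, so the target rate reduces to $C\,e^{-k\varepsilon^2 n}$ for observables of unit $\alpha$-H\"older norm, with the advertised $L^{-2}\varepsilon^2$ exponent restored at the end.

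First I would reduce to an observable depending only on the negative coordinates. A standard Sinai--Bowen cohomological argument on the Markov shift $X$ produces $\tilde\psi\in C^{\alpha'}(X^-)$, for some $0<\alpha'\leq \alpha$, and a bounded $\phi\in C^{\alpha'}(X)$ with $\psi=\tilde\psi+\phi\circ T-\phi$ and $\norm{\tilde\psi}_{\alpha'}\less \norm{\psi}_\alpha$. In the Birkhoff sum the coboundary telescopes to $\phi\circ T^n-\phi$, contributing only an $O(1/n)$ error; hence an LDT for $\tilde\psi$ at $\varepsilon/2$ yields one for $\psi$ once $n$ exceeds a fixed threshold. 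Under $\mu$ the sequence $X_j^-:=P_-(T^j x)$ is a stationary Markov chain on $X^-$ whose transition kernel is the SDS $K$ of \eqref{base SDS} and whose stationary law is $\mu^-$ (Proposition~\ref{mus, stationary measure}); therefore $S_n\tilde\psi(x)=\sum_{j=0}^{n-1}\tilde\psi(X_j^-)$ and $\int\tilde\psi\,d\mu=\int\tilde\psi\,d\mu^-$.

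For the exponential moments I would introduce the twisted operator $\Qop_t\varphi:=\Qop(e^{t\tilde\psi}\varphi)$ acting on $C^{\alpha'}(X^-)$. The Markov property, via conditioning on $X_0^-,X_1^-,\ldots,X_{n-2}^-$, yields
$$\int e^{t\,S_n\tilde\psi}\,d\mu \;=\; \int e^{t\tilde\psi}\,\Qop_t^{\,n-1}(\ind)\,d\mu^-.$$
Since $\Qop_0=\Qop$ has $1$ as a simple isolated eigenvalue with a spectral gap, by Proposition~\ref{Qop quasi-compact}, Kato's analytic perturbation theory gives, for $|t|$ below a threshold $t_0$ depending on $\norm{\tilde\psi}_{\alpha'}$, a simple dominant eigenvalue $\lambda(t)$ of $\Qop_t$ with $\log\lambda(t)=t\int\tilde\psi\,d\mu+O(t^2)$, the rest of its spectrum remaining strictly inside the unit disk. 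Consequently, for such $t$,
$$\int e^{t(S_n\tilde\psi-n\int\tilde\psi\,d\mu)}\,d\mu \;\leq\; C_1\,e^{C_2\,t^2 n}.$$
Markov's inequality applied to both tails, followed by optimization $t\sim \varepsilon$, produces $C\,e^{-k\varepsilon^2 n}$; the regime $\varepsilon\gg L$ is handled by the trivial bound $\abs{\psi}\leq L$.

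The main obstacle will be making the spectral perturbation quantitative, with constants scaling correctly in $\norm{\tilde\psi}_{\alpha'}$: the threshold $t_0$ and the constant $C_2$ must behave as $\norm{\tilde\psi}_{\alpha'}^{-1}$ and $\norm{\tilde\psi}_{\alpha'}^2$ respectively, so that the final exponent comes out as $k\,\norm{\psi}_\alpha^{-2}\varepsilon^2 n$. This is achieved by rerunning the Ionescu--Tulcea--Marinescu decomposition behind Proposition~\ref{Qop quasi-compact} for $\Qop_t$ instead of $\Qop$, expanding the resolvent $(z-\Qop_t)^{-1}$ around the eigenvalue $1$ of $\Qop$, and bounding the error terms using the quantified spectral gap of $\Qop$ together with $\norm{e^{t\tilde\psi}-1}_{\alpha'}\less|t|\,\norm{\tilde\psi}_{\alpha'}$ for $|t|\norm{\tilde\psi}_{\alpha'}$ small. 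The coboundary reduction of Step~1 is a classical manipulation that preserves this quadratic scaling of constants.
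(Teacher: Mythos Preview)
Your proposal is correct and follows essentially the same route as the paper: a Sinai--Bowen cohomological reduction to a one-sided observable $\tilde\psi\in C^{\alpha'}(X^-)$, combined with the Nagaev--Guivarc'h spectral method applied to the Markov operator $\Qop$ on $C^{\alpha'}(X^-)$ using the strong mixing of Proposition~\ref{Qop quasi-compact}. The paper packages the entire spectral perturbation argument (your twisted operator $\Qop_t$, Kato perturbation, Chernoff bound, and the quantitative scaling in $\norm{\tilde\psi}_{\alpha'}$) as a citation to \cite[Theorem~5.4]{DK-book}, after verifying the abstract hypotheses (A1)--(A4) there, and carries out the cohomological reduction explicitly via the additive holonomies $h^u_\psi$; but the content is the same as what you outline.
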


	\begin{proof}
		Let $K\colon X^- \to \Prob(X^-)$ be the SDS defined in~\eqref{base SDS} and consider its stationary measure
		$\mu^-:=(P^-)_\ast\mu$.
		Let $\Xspace$ be the set of all triples $(K,\mu^-, \varphi)$
		with  $\varphi\in C^\alpha(X^-)$. Defining the distance
		$d\left((K,\mu^-, \varphi), (K,\mu^-, \psi) \right) :=\norm{\varphi-\psi}_\alpha$, the set $\Xspace$ becomes a metric space.
		Consider the family of Banach algebras
		$( C^\alpha(X^-), \norm{\cdot}_\alpha))$, indexed in $\alpha\in [0,1]$. This scale satisfies assumptions (B1)-(B7) in~\cite[Section 5.2.1]{DK-book}. 
		The metric space of observed Markov systems  $\Xspace$   satisfies assumptions   (A1)-(A4) in~\cite[Section 5.2.1]{DK-book}. Assumption (A1) holds trivially. Assumption (A2) follows from Proposition~\ref{Qop quasi-compact}.
		Assumptions (A3) and (A4)  hold  easily (see the proof of~\cite[Proposition 5.15]{DK-book}).

		Given any $\varphi\in C^\alpha(X^-)$,
		by~\cite[Theorem 5.4]{DK-book}) there exists a neighbourhood $\mathscr{V}=B_\delta(\varphi)$ and there are positive constants
		$C <\infty$, $k$ and $\varepsilon_0$, depending only on $\norm{\varphi}_\alpha$, such that for all\, $0<\varepsilon<\varepsilon_0$,  $\psi \in\mathscr{V}$ and  $n\in\N$,  
		\begin{equation}
		\label{Markov process base ldt}
		\Pp \left[ \; \abs{\frac{1}{n}\,S_n(\psi) - \E_{\mu^-}(\psi)  } > \varepsilon \,\right]  \leq C\,e^{ - {k\,\varepsilon^2 }\, n  } \;, 
		\end{equation}  
		where $\Pp\in\Prob( (X^-)^\N )$ is any probability measure which makes the process $\{e_n:(X^-)^\N\to X^-\}_{n\geq 0}$,
		$e_n(x_j^-)_{j\geq 0}:= x_n^-$, a stationary Markov process with transition stochastic kernel $K$ and constant common distribution $\mu^-$. The constraint   $\varepsilon<\varepsilon_0$  can be removed replacing the constant $k$ by 
		$k'=k\, \varepsilon_0^2$ so that~\eqref{Markov process base ldt}
		holds for all $0<\varepsilon<1$. The constants $C$, $k$ and  $\delta>0$ (size  of the neighbourhood $\mathscr{V}$) depend basically on the mixing rate in Proposition~\ref{Qop quasi-compact} and the norm 
		$\norm{\varphi}_\alpha$. Hence~\eqref{Markov process base ldt} holds with the same constants  for all $\psi\in C^\alpha(X^-)$ with  $\norm{\psi}_\alpha\leq 1$. 
		
		More generally, given  $\psi\in C^\alpha(X^-)$ with  $\norm{\psi}_\alpha\leq L$, and applying ~\eqref{Markov process base ldt} 
		with $\bar \psi=L^{-1}\psi$ and $\bar \varepsilon=L^{-1}\varepsilon$,
		we get that ~\eqref{Markov process base ldt} still holds with $\bar \varepsilon$ in place of $\varepsilon$. This proves the the large deviation estimates in the Theorem' statement but w.r.t. a probability  $\Pp$ on $(X^-)^\N$ as above.

		Next consider the map $\pi:X\to (X^-)^\N$, $\pi(x_j)_{j\in \Z}:=\{x_n^-\}_{n\geq 0}$ where $x_n^-=( x_{n+j})_{j\leq 0} $. This projection makes  the following diagram commutative
		$$ \begin{CD} X    @>T>>  X\\@V\pi VV        @VV\pi V\\ (X^-)^\N     @>>T >   (X^-)^\N\end{CD} $$
		where the horizontal arrows stand for the left shift maps.
		Defining $\mathrm{e}\colon (X^-)^\N \to X^-$,
		$\mathrm{e}\{x_n^-\}_{n\geq 0}:= x_0^-$, the above process $\{e_n\}_{n\geq 0}$ is given  by
		$e_n= \mathrm{e}\circ T^n$. The process
		$\{e_n\}_{n\geq 0}$   becomes  Markov with common distribution $\mu^-$  for $\Pp :=\pi_\ast\mu$. In particular the previous large deviation estimates hold
		w.r.t. this probability, for all observables in $C^\alpha(X^-)$.
		Given $\psi \in C^\alpha(X^-)$ we make the identification $\psi\equiv \psi\circ P_-$, thus regarding $\psi$ as a function on $X$.
		Since $\pi:X\to (X^-)^\N$ preserves measure,
		for all  $\varepsilon>0$ and $n\in\N$, if $\norm{\psi}_\alpha\leq L$,
		\begin{equation}
		\label{Markov process base ldt 2}
		\mu \left\{  x\in X\colon \; \abs{\frac{1}{n}\, \sum_{j=0}^{n-1} \psi(T^j x) - \int \psi\,d\mu   } > \varepsilon \,\right\}  \leq C\,e^{ - {k\,L^{-2}\, \varepsilon^2 }\, n  } \; . 
		\end{equation} 
		We have established the theorem for observables $\psi\in C^\alpha(X^-)$, i.e., observables which do not depend on future coordinates.

		For the general case the idea is that any observable
		$\psi\in C^\alpha(X)$ is co-homologous to another H\"older observable
		$\psi^- \in C^\beta (X^-)$ for some $0 < \beta < \alpha$, and the large deviation estimates can be transferred  over  co-homologous observables.

		Observales $\psi\in C^\alpha(X)$ can be regarded as (additive) $1$-dimensional  linear cocycles. In particular we can associate them stable and unstable holonomies. Next we outline the reduction procedure of	Section~\ref{holonomy} applied to observables.
		Given $x,y\in X$ with $y\in W^u_{loc}(x)$, we define
		$$h^u_\psi(x,y):= \sum_{n=1}^\infty \psi(T^{-n} y) - \psi(T^{-n} x) . $$
		Since $\psi$ is H\"older continuous and $y\in W^u_{loc}(x)$ this series converges geometrically. Given $x,y,z$ in the same local unstable manifold, the following properties hold, the last one if $T y \in W^u_{loc}(T x)$:
		\begin{itemize}
			\item[(a)]  $ h^u_\psi(x,x) = 0$,
			\item[(b)]  $ h^u_\psi(x,y) = - h^u_\psi(y,x) $,
			\item[(c)]  $ h^u_\psi(x,z) = h^u_\psi(x,y)  +  h^u_\psi(y,z) $,
			\item[(d)]  $ h^u_\psi(x,y) + \psi(y) = \psi(x) + h^u_\psi(T x, T y)$.
		\end{itemize}
		Using these properties and the map $\theta:X\to X$ introduced in Section~\ref{holonomy} we define $\psi^-:X\to \R$ by
		$$ \psi^- (x):= h^u_\psi(\theta(T^{-1} x), T^{-1} x) + \psi(T^{-1} x) + h^u_\psi(x,\theta(x)) . $$
		
		By Theorem~\ref{t.Holder.holonomies}, the map $\theta:X\to X$ is also H\"older,
		which implies that $x\mapsto h^u_\psi(x,\theta(x))$ is $\beta$-H\"oder for some $0<\beta<\alpha$. Therefore 
		$\psi^-\in C^\beta(X)$.
		For the sake of simplicity we assume that with the same parameters $C$ and $k$, the bound~\eqref{Markov process base ldt 2} holds for observables in $C^\beta(X^-)$.
		
		Using the holonomy properties above,
		\begin{align*}
		\psi^{-}(x) &= h^u_\psi(\theta(T^{-1} x), T^{-1} x) + \psi(T^{-1} x) + h^u_\psi(x,\theta(x))\\
		&= h^u_\psi(\theta(T^{-1} x), T^{-1} x) +  h^u_\psi(T^{-1} x,T^{-1}\theta(x)) + \psi(T^{-1} \theta(x) ) \\
		&= h^u_\psi(\theta(T^{-1} x) ,T^{-1}\theta(x)) + \psi(T^{-1} \theta(x) )
		\end{align*}
		which shows that $\psi^-$ does not depend on future coordinates.
		
		Finally we have
		\begin{align*}
		\psi^-(x) - \psi(T^{-1} x) &= h^u_\psi(\theta(T^{-1} x), T^{-1} x) + h^u_\psi(x,\theta(x))\\
		&= h^u_\psi(x,\theta(x)) - h^u_\psi(T^{-1} x, \theta(T^{-1} x) )\\
		&= \eta(x) - \eta(T^{-1} x)
		\end{align*}
		with $\eta(x):= h^u_\psi(x,\theta(x))$. This allows us to transfer over the large deviation estimates from $\psi^-$ to $\psi$.
	\end{proof}

	\section{Fiber Strong Mixing}
	\label{fibermixing}
	In this section we establish the strong mixing of the Markov operator associated to the projective cocycle.

	Consider the space of sequences  $X^-$ introduced at the beginning of  Section~\ref{statements}
	and let $A \colon X^-\to \GL$ be an $\alpha$-H\"older continuous function. 
	
	This function determines an invertible
	cocycle $F\colon X\times \R^d\to X\times\R^d$,
	$F(x, v):=(T x, A(x)\,v)$, where  
	$A(x)$ is short notation for $A(P_-(x))=A(x^-)$.

	Assume that this cocycle satisfies the twisting and pinching condition (see Definition~\ref{pinching and twisting}), which in particular, by~\cite{BoV04}  will imply that there is a gap between the   Lyapunov exponents 
	$L_1(A,\mu)>L_2(A,\mu)$.

	\bigskip

	We define the SDS,  
	$K:X^-\times\proj  \to \Prob(X^- \times\proj)$,
	\begin{equation}
	\label{def Fiber kernel}
	K_{(x^-,\hat p)}:= \sum_{i=1}^\ell p_i(x^-)\, \delta_{((x^-,i), \hat A(x^-)\, \hat p)}. 
	\end{equation}
	This determines the operator
	$\Qop:C^0(X^-\times\proj)\to C^0(X^-\times\proj)$,
	$$ (\Qop \varphi)(x^-,\hat p )= \sum_{i=1}^\ell  p_i(x^-)\, \varphi((x^-,i), \hat A(x^-)\,\hat p). $$
	Here $\hat A (x^-)$ stands for  the projective map induced
	by  $A(x^-)\in \GL$.

	By the pinching and twisting assumption combined with  the Oseledets theorem
	there exists an $F$-invariant  measurable decomposition  $\R^d= E^u(x)\oplus E^{s}(x)$,   where $E^u(x)$ is the Oseledets subspace corresponding to the largest Lyapunov exponent and $E^s(x)$ is the sum of the subspaces corresponding to lower Lyapunov exponents.
	More precisely we set
	
	\begin{align}
	\label{Eu def}
	E^u(x) & :=\left\{ v \in \R^d \colon \lim_{n\to +\infty} \frac{1}{n}\,\log \norm{A^{-n}(x)\, v}= -L_1(A,\mu)\right\} ,\\
	\label{Es def}
	E^s(x) & :=\left\{ v \in \R^d \colon \lim_{n\to +\infty} \frac{1}{n}\,\log \norm{A^n(x)\, v}<L_1(A,\mu)\right\} . 
	\end{align}
	
	We say that a point $x\in X$ is a \textit{$u$-regular point}, in the sense of Oseledet, if for any $v\in\R^d$ the limit in~\eqref{Eu def} exists and $E^u(x)$ is a $1$-dimensional subspace. We say that  $x\in X$ is a \textit{$s$-regular point} if for any $v\in\R^d$ the limit in~\eqref{Es  def} exists and $E^s(x)$ is a codimension $1$ subspace. Finally we say that $x$ is  a \textit{ regular point }  if it is both $u$-regular, $s$-regular 
	and $\R^d=E^u(x)\oplus E^s(x)$. 
	
	Denote by $\mathscr{O}^u$, $\mathscr{O}^s$ and $\mathscr{O}$ the (full measure) sets of all $u$-regular, $s$-regular and  regular points, respectively.
	
	\pagebreak
	
	\begin{proposition}
		\label{constant oseledets}
		Given $y\in \Wuloc(x)$
		\begin{enumerate}
			\item[(1)] $x\in \mathscr{O}^u$ $\Leftrightarrow$ $y\in \mathscr{O}^u$;
			\item[(2)] $x\in \mathscr{O}^u$ or $y\in \mathscr{O}^u$  $\Rightarrow$  $E^u(x)=E^y(y)$
		\end{enumerate}
		Similarly, given $y\in \Wsloc(x)$
		\begin{enumerate}
			\item[(3)] $x\in \mathscr{O}^s$ $\Leftrightarrow$ $y\in \mathscr{O}^s$;
			\item[(4)] $x\in \mathscr{O}^s$ or $y\in \mathscr{O}^s$  $\Rightarrow$  $H^s_{x,y} E^s(x)=E^y(y)$.
		\end{enumerate}
	\end{proposition}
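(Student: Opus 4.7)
The plan is to split the four claims into two complementary cases. After the reduction carried out in Section~\ref{holonomy} we may assume $A = A^-\circ P^-$, so $A(z)$ depends only on the negative coordinates of $z$. This makes the unstable case essentially automatic and isolates the stable case as the only one requiring actual work, which will be handled by the standard equivariance of the stable holonomy along forward orbits.

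For (1) and (2), if $y\in \Wuloc(x)$ then $y_n = x_n$ for every $n\leq 0$, so for each $k\geq 1$ the matrix $A(T^{-k}x) = A^-(P^-(T^{-k}x))$ depends only on the coordinates $x_m$ with $m\leq -k\leq -1$, which are shared with $y$. Hence $A(T^{-k}x) = A(T^{-k}y)$ for every $k\geq 1$, and consequently $A^{-n}(x) = A^{-n}(y)$ for every $n\geq 1$. The defining relation~\eqref{Eu def} then depends only on data common to $x$ and $y$, which at once yields both the equivalence of $u$-regularity and the equality $E^u(x) = E^u(y)$.

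For (3) and (4), take $y\in \Wsloc(x)$ and iterate the equivariance relation $A(y)\circ H^s_{x,y} = H^s_{Tx,Ty}\circ A(x)$ to obtain
\[
A^n(y) \;=\; H^s_{T^n x,\,T^n y}\,\circ\, A^n(x)\,\circ\, (H^s_{x,y})^{-1},\qquad n\geq 0.
\]
Uniform hyperbolicity gives $d(T^n x,T^n y)\leq C\lambda^n d(x,y)$, and the H\"older estimate $\|H^s_{u,v}-\id\|\leq C'\,d(u,v)^\beta$ from Theorem~\ref{t.Holder.holonomies} then forces both $\|H^s_{T^n x,T^n y}\|$ and $\|(H^s_{T^n x,T^n y})^{-1}\|$ to converge to $1$. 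Setting $w := (H^s_{x,y})^{-1}v$ it follows that
\[
\tfrac{1}{n}\log\|A^n(y)\,v\| \;-\; \tfrac{1}{n}\log\|A^n(x)\,w\| \;\longrightarrow\; 0.
\]
Therefore the limit in~\eqref{Es def} exists for every $v$ at $y$ if and only if it exists for every $w$ at $x$, which proves (3); and $v\in E^s(y) \Longleftrightarrow w\in E^s(x) \Longleftrightarrow v\in H^s_{x,y}\,E^s(x)$, proving (4). The codimension-one condition is preserved since $H^s_{x,y}$ is a linear isomorphism.

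There is no genuine obstacle beyond the asymptotic control of the conjugating factors $H^s_{T^n x, T^n y}$ along forward orbits; this is precisely where the quantitative H\"older continuity of the stable holonomy established in Section~\ref{holonomy} enters essentially, and it is the only ingredient beyond unraveling definitions.
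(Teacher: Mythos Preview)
Your proof is correct and follows essentially the same approach as the paper's: items (1)--(2) are reduced to the observation that $A^{-n}(x)=A^{-n}(y)$ because $A$ factors through $P^-$, and items (3)--(4) come from the iterated holonomy relation $A^n(y)=H^s_{T^n x,T^n y}\,A^n(x)\,H^s_{y,x}$ together with the uniform control of the holonomy norms. You supply slightly more detail than the paper (the explicit convergence $\|H^s_{T^nx,T^ny}\|\to 1$ rather than mere uniform boundedness, and the explicit bijection $v\mapsto w$), but the argument is the same.
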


	\begin{proof}
		Since $A$ factors through $P_-$, any negative power of $A$
		$$A^{-n}(x)= A(T^{-n} x)^{-1} \, \cdots \, A(T^{-2} x)^{-1}\, A(T^{-1} x)^{-1}   $$
		is constant along $\Wuloc(x)$. Hence
		$A^{-n}(y)=A^{-n}(x)$ for all $n\in\N$, and items (1) and (2) follow.
		
		Items (3) and (4) follow from the holonomy relation
		$$ A^n(y)= H^s_{T^n x, T^n y} \, A^n(x)\, H^s_{y,x} \quad \forall\,  n\geq 0\quad \forall\,  y\in \Wsloc(x) $$
		and the fact that the holonomies $H^s_{x,y}$ are uniformly bounded.
	\end{proof}

	Because of the previous proposition we can write $E^u(x^-)$ instead of $E^u(x)$.
	Let us denote by $\m\in \Prob(X^-\times\proj)$
	the  measure which admits the disintegration $\{ \m_{x^-}:=\delta_{E^u(x^-)}\colon  x^-\in X^-\}$, over the canonical projection $\pi\colon  X^-\times \proj\to X^-$.

	\begin{proposition}
		\label{stationary measure}
		The operator $\Qop$  admits the stationary measure $\m$. 
	\end{proposition}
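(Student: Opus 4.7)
The plan is to verify directly that $\int \Qop\varphi \, d\m = \int \varphi \, d\m$ for every $\varphi \in C^0(X^- \times \proj)$, by combining the $F$-invariance of the Oseledets line bundle $E^u$ with the base-level stationarity $\Qop^\ast \mu^- = \mu^-$ established in Proposition~\ref{mus, stationary measure}.

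Unfolding the disintegration $\m_{x^-} = \delta_{E^u(x^-)}$, for any bounded Borel function $\varphi$ on $X^-\times\proj$ one has $\int \varphi\, d\m = \int_{X^-} \varphi(x^-, E^u(x^-))\, d\mu^-(x^-)$, and similarly, from the definition~\eqref{def Fiber kernel} of $K$,
\begin{equation*}
\int \Qop\varphi\, d\m = \int_{X^-} \sum_{i=1}^\ell p_i(x^-)\, \varphi\bigl((x^-, i),\, \hat A(x^-)\, E^u(x^-)\bigr)\, d\mu^-(x^-).
\end{equation*}
Next I would invoke the $F$-invariance of the Oseledets decomposition: for $\mu$-almost every $x\in X$, $A(x)\, E^u(x) = E^u(Tx)$. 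Because $A$ factors through $P_-$ and $E^u$ is constant on local unstable sets (Proposition~\ref{constant oseledets}), this translates to the statement that for $\mu^-$-almost every $x^- \in X^-$ and every $i \in \{1,\dots,\ell\}$ with $p_i(x^-) > 0$, one has $\hat A(x^-)\, E^u(x^-) = E^u((x^-, i))$ in $\proj$; the local product structure of $\mu$ ensures that the full-measure set on which $F$-invariance holds projects down in this fibered way.

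Substituting, and writing $\psi(y^-) := \varphi(y^-, E^u(y^-))$ (a bounded Borel function on $X^-$), the integrand becomes $\sum_i p_i(x^-)\, \psi((x^-, i)) = (\Qop_0 \psi)(x^-)$, where $\Qop_0$ is the base Markov operator~\eqref{base Qop}. Hence
\begin{equation*}
\int \Qop\varphi\, d\m = \int_{X^-} (\Qop_0\psi)(x^-)\, d\mu^-(x^-) = \int_{X^-} \psi\, d\mu^- = \int \varphi\, d\m,
\end{equation*}
the middle equality being the identity $\Qop_0^\ast \mu^- = \mu^-$ applied to the bounded Borel observable $\psi$. The only subtlety worth flagging is that $\psi$ is merely Borel, not continuous (the Oseledets direction $E^u$ depends only measurably on $x^-$), but the measure-level identity $\Qop_0^\ast \mu^- = \mu^-$ extends automatically from $C^0(X^-)$ to bounded Borel functions, so no additional approximation argument is needed. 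The main conceptual point, rather than a technical obstacle, is to track carefully how the $F$-invariance on $X$ descends to the relation $\hat A(x^-)\, E^u(x^-) = E^u((x^-, i))$ on $X^-$ for each admissible $i$, which is exactly the mechanism by which the fiber cocycle on $\proj$ pushes the Dirac mass at $E^u(x^-)$ forward to the Dirac mass at $E^u((x^-, i))$.
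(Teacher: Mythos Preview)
Your proof is correct and follows essentially the same route as the paper: unfold the disintegration of $\m$, use the $F$-invariance of the Oseledets direction to replace $\hat A(x^-)\,E^u(x^-)$ by $E^u((x^-,i))$, and then reduce to the base-level stationarity of $\mu^-$. The only difference is organizational: the paper re-runs the base computation inline (unwinding $p_i(x^-)=\muxminus([i])$ back to an integral over $\mu$), whereas you invoke Proposition~\ref{mus, stationary measure} directly on the Borel observable $\psi(y^-)=\varphi(y^-,E^u(y^-))$, which is a slightly cleaner packaging of the same argument.
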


	\begin{proof}
		Let $\Qop^\ast$ be the adjoint of $\Qop$ on $C^0( X^-\times\proj)$.
		Denote by $\eu(x^-)$ the projective point that represents the Oseledet's direction $E^u(x^-)$.
		Using the notation of the proof of Proposition~\ref{p alpha Holder}, for any $\varphi\in C^0( X^-\times\proj)$ we have
		
		\begin{align*}
		\int \varphi\, d\Qop^\ast \m  &= \int (\Qop \varphi) \, d\m  \\
		&= \int  \sum_{i=1}^\ell p_i(x^-)\, \varphi((x^-, i), \hat A(x^-) \hat p )\, d\m(x^-,\hat p ) \\
		&= \sum_{i=1}^\ell \int p_i(x^-)\, \varphi((x^-, i), \hat A(x^-) \eu(x^-) )\, d\mu^-(x^- ) \\
		&= \sum_{i=1}^\ell \int p_i(x^-)\, \varphi((x^-, i),   \eu(x^-,i) )\, d\mu^-(x^- ) \\
		&= \sum_{i=1}^\ell  \int  \varphi((x^-, i),   \eu(x^-,i))\, \muxminus([i])\, d\mu^-(x^-) \\
		&= \sum_{i=1}^\ell  \int \left(  \int \varphi((x^-, i),   \eu(x^-,i))\, \ind_{[i]}(y^+) \, d\muxminus(y^+)  \right)\,  d\mu^-(x^-) \\
		&=  \iiint   \varphi((x^-, y_1),   \eu(x^-,y_1))\, d\mu(x^-,y_1, y^+) \\
		&=  \int   \varphi(P_-(x),   \eu(P_-(x)))\, d\mu(x) \\
		&=  \int   \varphi(x^-,   \eu(x^-))\, d\mu^-(x^-)  = \int \varphi\, d\m .
		\end{align*}
		Since $\varphi$ is arbitrary, this implies that $\Qop^\ast\m= \m$.
	\end{proof}

	The projective distance $\delta:\proj\times\proj\to [0,1]$ is defined by
	$$ \delta(\hat p, \hat q):= \norm{p\wedge q}=\abs{\sin \angle(p,q)}.$$
	
	We consider several  seminorms on the space $C^0( X^-\times \proj)$. 
	
	Define $v_\alpha^\Pp:C^0( X^-\times\proj)\to [0,\infty]$ by
	$$ v_\alpha^\Pp(\varphi):= \sup_{x^-\in  X^-}\sup_{\hat p\neq \hat q} \frac{\abs{\varphi(x^-,\hat p) - \varphi(x^-, \hat q)}}{\delta(\hat p, \hat q)^\alpha}. $$
	Define  $\valfaSigma:C^0( X^-\times\proj)\to [0,\infty]$ by
	$$ \valfaSigma(\varphi):= \sup_{\hat p\in\proj}\sup_{x^-\neq y^-} \frac{\abs{\varphi(x^-,\hat p) - \varphi(y^-, \hat p)}}{d(x^-, y^-)^\alpha}. $$
	Define also $v_\alpha:C^0( X^-\times\proj)\to [0,\infty]$ by
	$$ v_\alpha(\varphi):= v_\alpha^\Pp(\varphi)+\valfaSigma(\varphi). $$
	Finally denote by $C^\alpha( X^-\times \proj)$ the Banach space of  continuous functions $\varphi: X^-\to \C$ such that $v_\alpha(\varphi)<\infty$, i.e., $\alpha$-H\"older continuous observables,
	endowed with the norm
	$$ \norm{\varphi}_\alpha:= v_\alpha(\varphi)+\norm{\varphi}_\infty . $$

	\begin{proposition}
		\label{uniform convergence lemma}
		Given $x^-\in  X^-$ and $\hat p\in\proj $ for 
		$\muxminus$ almost every $x\in W^u_{{\rm loc}}(x^-)$
		$$\lim_{n\to \infty} \frac{1}{n}\,\log\norm{A^n(x)\,p}=L_1(A,\mu) .$$
		Moreover  
		$$\lim_{n\to \infty} \frac{1}{n}\,\int_{W^u_{loc}(x^-)} \log\norm{A^n(x)\,p}\, d\muxminus(x) =L_1(A,\mu) .$$
		with uniform convergence in 
		$(x^-,\hat p)\in  X^-\times\proj $.
	\end{proposition}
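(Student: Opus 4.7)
The strategy is in two parts: first, establish the pointwise almost-sure convergence via Oseledets' theorem, and second, upgrade this to uniform convergence of the fiber integrals by exploiting the Markov-chain structure on $X^-\times\proj$ together with a Lasota--Yorke estimate for the fiber operator $\Qop$. For the pointwise statement, the pinching and twisting hypotheses make $L_1(A,\mu)$ a simple Lyapunov exponent, so Oseledets' theorem gives $\frac{1}{n}\log\norm{A^n(x)\,v}\to L_1(A,\mu)$ for $\mu$-a.e.\ $x$ and every $v\in\R^d\setminus E^s(x)$. Disintegrating $\mu=\int \muxminus\,d\mu^-(x^-)$, this Oseledets convergence set has full $\muxminus$-measure in $\mu^-$-a.e.\ fiber. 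For a fixed $\hat p\in\proj$ the only remaining exceptional set is $\{x\in\Wuloc(x^-):p\in E^s(x)\}$, which has zero $\muxminus$-measure provided the distribution of $E^s(x)$ under $\muxminus$ assigns zero mass to the projective hyperplane of subspaces containing $p$. This ``non-degeneracy'' is supplied by the twisting condition applied to the adjoint cocycle, together with the uniqueness of the stationary measure $\m$ from Proposition~\ref{stationary measure}, in the spirit of the Furstenberg--Ledrappier theory.

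For the uniform convergence of the averages, set $\xi(y^-,\hat q):=\log\norm{A(y^-)\,q}/\norm{q}$, which is H\"older continuous on $X^-\times\proj$ since $A$ is. Because $A$ factors through $P_-$, for $x\in\Wuloc(x^-)$ the trajectory $(P_-(T^k x),\hat A^k(x)\hat p)_{k\ge 0}$ under $\muxminus$ has the same distribution as the Markov chain on $X^-\times\proj$ with kernel~\eqref{def Fiber kernel} started at $(x^-,\hat p)$. The telescoping identity $\log(\norm{A^n(x)\,p}/\norm{p})=\sum_{k=0}^{n-1}\xi(P_-(T^k x),\hat A^k(x)\hat p)$ then yields
\begin{equation*}
\frac{1}{n}\int_{\Wuloc(x^-)}\log\frac{\norm{A^n(x)\,p}}{\norm{p}}\,d\muxminus(x)=\frac{1}{n}\sum_{k=0}^{n-1}(\Qop^k\xi)(x^-,\hat p).
\end{equation*}
A Lasota--Yorke type inequality for $\Qop$ on $C^\beta(X^-\times\proj)$, for some $0<\beta\le\alpha$ provided by fiber bunching---analogous to Proposition~\ref{v alpha Qop varphi ineq } but incorporating the Lipschitz defect of $\hat A$ on $\proj$---controls the H\"older norms of $\{\Qop^k\xi\}_k$ uniformly. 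Consequently the Cesaro averages form an equicontinuous family on the compact space $X^-\times\proj$. Combined with the pointwise convergence to the constant $L_1$ from the first step (using the uniform bound $\abs{\xi}\le\log\max\{\norm{A}_\infty,\norm{A^{-1}}_\infty\}$ and dominated convergence), Arzel\`a--Ascoli delivers uniform convergence.

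The principal difficulty lies in the non-degeneracy claim of the first step, which is the step that genuinely leverages the twisting hypothesis beyond what Oseledets alone provides, and which must be arranged so that it holds for \emph{every} $x^-\in X^-$ rather than merely $\mu^-$-a.e. The Lasota--Yorke estimate needed in the second step is essentially a preview of the contraction inputs used later in Section~\ref{fibermixing} to establish fiber strong mixing; once it is in place, the Arzel\`a--Ascoli argument is routine and the uniform convergence follows immediately.
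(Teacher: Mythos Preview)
Your second step has a circularity problem. The Lasota--Yorke inequality you invoke for the fiber operator $\Qop$ on $C^\beta(X^-\times\proj)$ requires control of the projective seminorm $v_\beta^\Pp(\Qop^n\varphi)$, and in the paper this control comes from the average contraction $\kappa_\alpha(A^n,\mu)<1$ of Proposition~\ref{kappa alpha}---whose proof explicitly \emph{uses} Proposition~\ref{uniform convergence lemma}. Fiber bunching alone does not supply it: fiber bunching only says $\norm{A^N(x)}\norm{A^N(x)^{-1}}<2^{N\alpha}$, so the worst-case projective Lipschitz constant of $\hat A^n(x)$, and with it $v_\beta^\Pp(\Qop^n\xi)$, may still grow exponentially. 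Without the \emph{averaged} contraction (which encodes the gap $L_1>L_2$ and is precisely what the present proposition feeds into), the family $\bigl\{\frac{1}{n}\sum_{k<n}\Qop^k\xi\bigr\}_n$ is not equicontinuous in the $\hat p$ variable, and your Arzel\`a--Ascoli step fails. The identity $\int_{\Wuloc(x^-)}\frac{1}{n}\log\norm{A^n(x)p}\,d\muxminus=\frac{1}{n}\sum_{k<n}(\Qop^k\xi)(x^-,\hat p)$ is correct and pleasant, but by itself it does not deliver equicontinuity.

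The paper avoids this trap by arguing directly, without any Markov-operator contraction. It proceeds by contradiction: given $(x_n^-,\hat p_n)\to(x^-,\hat p)$ along which the averages stay below $L_1$, it uses the stable holonomies $h_n:\Wuloc(x^-)\to\Wuloc(x_n^-)$ to transfer every integral back to the fixed leaf $\Wuloc(x^-)$ (with Jacobians $Jh_n\to 1$ by the local product structure and $H^s_{T^n x,T^n h_n(x)}\to\mathrm{id}$). The key input is then the non-degeneracy $m^s_{y^-}(V_{\hat p})=0$ for \emph{every} $y^-$, obtained from~\cite{AvV1} via pinching and twisting together with the continuity of $y^-\mapsto m^s_{y^-}$; this produces, for each $\delta>0$, a set $S_\delta\subset\Wuloc(x^-)$ of $\muxminus$-measure $>1-\delta$ on which $\dist(H^s_{h_n(x),x}\,p_n,E^s(x))>\delta$ for large $n$, and on which the pointwise growth is therefore $L_1$. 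This forces the limsup above $L_1(1-\delta)-\delta\log\norm{A}$, a contradiction. In short, Proposition~\ref{uniform convergence lemma} is the \emph{prerequisite} for the later contraction estimates in Section~\ref{fibermixing}, not a consequence of them; you have the logical dependence reversed.
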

	
	\begin{proof}
		Fix some cilinder $[i]:=\{y^-\in  X^-\colon y_0=i\}$ and $x^-\in [i]$ such that $\muxminus$ almost every $x\in \Wuloc(x^-)$ belongs to the set $\mathscr{O}$ where Oseledet's theorem holds and let $\mathscr{O}_{x^-}:= \mathscr{O}\cap \Wuloc(x^-)$.

		Given  $y^-\in [i]$, for each $x\in W^u_{loc}(x^-)$ 
		define $h^s_{y^-}(x)$ as the unique point of  $W^s_{loc}(x)\cap W^u_{loc}(y^-)$, define 
		$\mathscr{O}_{y^-}:=h^s_{y^-}(\mathscr{O}_{x^-})$. Because  $\mu$ has a product structure we get that $\mu^u_{y^-}(\mathscr{O}_{y^-})=1$.

		By Proposition~\ref{constant oseledets},
		$\mathscr{O}_{y^-}\subseteq \mathscr{O}^s\cap \Wuloc(y^-)$
		and   $E^s(y) =H^s_{x,y}E^s(x)$ for every $y\in \mathscr{O}_{y^-}$.

		Let $\grass(d-1)$ be the Grassmanian space of $d-1$ dimensional subspaces of $\R^d$. Given   $\hat p\in \proj$ consider the hyperplane section defined by  $V_{\hat p} :=\{E\in \grass(d-1)\colon p\in E\}$. 
		
		Define a family of measures on $\grass(d-1)$ as $m^s_{y^-}:=\int \delta_{E^s}d\muyminus$. Next define a measure $m^s$ on $ X^-\times \grass(d-1)$ by $m^s:=\int_{ X^-}m^s_{y^-} d\mu^-(y^-)$, this is the projection of the $s$-state defined on $X\times \grass(d-1)$ by the disintegration $\delta_{E^s_y}$. By \cite[Proposition~4.4]{AvV1}, applied to $s$-states instead of $u$-states, we get that $y^-\mapsto m^s_{y^-}$ is continuous. Moreover, using the pinching and twisting assumption by \cite[Proposition~5.1]{AvV1} every hyperplane section $V$ has $m^s_{y^-}(V)=0$.

		Now take any $\hat p\in \proj$, by the previous observation we have that $m^s_{y^-}(V_{\hat p})=0$, which is equivalent to 
		$$
		\muyminus \{y\in W^u_{loc}(y^-)\colon p\notin E^s_{y}\}=1,
		$$
		so we get that for $\muyminus$ almost every $y$, $\lim_{n\to \infty} \frac{1}{n}\,\log\norm{A^n(y)\,p}=L_1(A,\mu)$.

		\bigskip
		
		Second step: Uniform convergence.
		We will make use of the co-norm of a matrix $g\in \GL$, defined by $m(g):=\min_{\norm{x}=1} \norm{g\, x}$. This quantity  can  also be characterized by $m(g)=\norm{g^{-1}}^{-1}$.

		The proof goes by contradiction.
		Assume there are sequences, which we can always assume to be convergent, $\hat p_n\to \hat p$ in $\proj$ and $x^-_n\to x^-$ in $ X^-$ such that
		$$\limsup_{n\to \infty} \frac{1}{n}\,\int_{W^u_{{\rm loc}}(x^-_n)} \log\norm{A^n(x)\,p_n}\, d\mu^u_{x_n^-}(x) <L_1(A,\mu) .$$

		For each $x\in W^u_{{\rm loc}}(x^-)$ let $h_n(x) =h^s_{x_n^{-}} (x)$ be the intersection of $W^u_{{\rm loc}}(x^-_n)$ with $W^s_{{\rm loc}}(x)$, so we get
		$$A^n(h_n(x))=H^s_{T^n(x),T^n(h_n(x))} A^n(x) H^s_{h_n(x),x}.$$
		Then we have 
		\begin{align}\label{eq.norm.ineq}
		& \norm{A^n(x)H^s_{h_n(x),x}\,p_n} \, m(H^s_{T^n(x),T^n(h_n(x))}) \leq  \norm{A^n(h_n(x))\,p_n}\\
		&\qquad \qquad \qquad  \qquad  \leq \norm{A^n(x)H^s_{h_n(x),x}\,p_n} \, \norm{H^s_{T^n(x),T^n(h_n(x))}}
		\nonumber 
		\end{align}

		Observe that $m(H^s_{T^n(x),T^n(h_n(x))})$ and 
		$\norm{H^s_{T^n(x),T^n(h_n(x))}}$ converge uniformly to $1$ when $n\to \infty$, so for $n$ large enough and for every $x\in W^u_{{\rm loc}}(x^-)$ we get
		$$
		\frac{1}{2}\norm{A^n(x)H^s_{h_n(x),x}\,p_n}\leq \norm{A^n(h_n(x))\,p_n}\leq 2\norm{A^n(x)H^s_{h_n(x),x}\,p_n}.
		$$

		Let $Jh_n$ be the Jacobian of $h_n$ from the measure $\muxminus$ to $\muxnminus$. By the local product  assumption, 
		$\mu=\rho \, (\mu^-\times \mu^+)$ and so 
		$\muxminus =\rho(x^-,\cdot)\mu^+$. Then as $\rho$ is continuous we get that $Jh_n$ converges uniformly to $1$ when $n\to \infty$.
		
		By \eqref{eq.norm.ineq} we have that 
		$$\begin{aligned}
		&\limsup_{n\to \infty} \frac{1}{n}\,\int_{W^u_{{\rm loc}}(x^-_n)} \log\norm{A^n(x)\,p_n}\, d\muxnminus(x) \\
		& \qquad = \limsup_{n\to \infty} \frac{1}{n}\,\int_{W^u_{{\rm loc}}(x^-)} \log\norm{A^n(x)H^s_{h_n(x),x}\,p_n}\, Jh_n(x)d\muxminus(x).
		\end{aligned}$$

		We have that $H^s_{h_n(x),x}\,p_n$ converges to $p$ uniformly on $x$, also we have that $\muxminus (\{x \colon  p\in E^s_x\})=0$ so for any $\delta>0$ we can find $n_0$ and $S_\delta\subset W^u_{{\rm loc}}(x^-)$, with $\muxminus(S_\delta)>1-\delta$, such that $\dist(H^s_{h_n(x),x}\,p_n,E^s_x)>\delta$ for every $x\in S_\delta$ and $n\geq n_0$.
		
		For any $x \in S_\delta \cap \mathscr{O}_{x^-}$ and any $\hat v \in \proj$ such that $\dist(v,E^s_x)>\delta$ we have that $\lim \frac{1}{n}\log \norm{A^n(x)v}=L_1(A,\mu)$, moreover this convergence is uniform  on the set $\{ \hat v\in \proj \colon  \dist(v,E^s_x)>\delta\}$.
		
		So we conclude that 
		$$
		\begin{aligned}
		\limsup_{n\to \infty} \frac{1}{n}\,\int_{W^u_{{\rm loc}}(x^-)} \log\norm{A^n(x)H^s_{h_n(x),x}\,p_n}\, Jh_n(x)d\muxminus(x)\geq& \\
		L_1(A,\mu)(1-\delta)-\log \norm{A} \delta&.
		\end{aligned}
		$$
		As $\delta$ can be taken arbitrarily small we get a contradiction.
	\end{proof}

	\begin{proposition}  \label{omega limit}
		Let $a=T^q(a)$ be a periodic point of $T$ such that $A^q(a)$ has eigenvalues with multiplicity one and distinct absolute values. Given $z\in W^s(a)$ and
		$\hat v\in\proj$, the sequence $\hat A^{n  q} (z) \, \hat v $ converges to an eigen-direction of $A^q(a)$ in $\proj$.
	\end{proposition}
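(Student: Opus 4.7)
The plan is to use the stable holonomy to conjugate the iteration $A^{nq}(z)$ to an iteration of the fixed matrix $A^q(a)$, and then apply elementary spectral analysis. Starting from the invariance relation $A(y)\,H^s_{x,y} = H^s_{Tx,Ty}\,A(x)$, iterating gives $A^n(y)\,H^s_{x,y} = H^s_{T^n x,\,T^n y}\,A^n(x)$. Setting $x=a$, $y=z$, $n=nq$, and using $T^{nq}a=a$ together with $A^{nq}(a) = (A^q(a))^n$, this rearranges to
$$ A^{nq}(z) \;=\; H^s_{a,\,T^{nq}z}\,(A^q(a))^n\,(H^s_{a,z})^{-1}. $$
Since $z\in W^s(a)$ only globally, one first extends the holonomy by picking $m$ large enough so that $T^m z \in \Wsloc(T^m a)$ and setting $H^s_{a,z} := A^m(z)^{-1}\,H^s_{T^m a,\,T^m z}\,A^m(a)$; this is independent of $m$ and satisfies the same invariance.

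Next I would show that the left holonomy factor becomes inert in the limit. Because $z \in W^s(a)$, one has $T^{nq}z \to a$, so property (4) of the definition of stable holonomies (joint H\"older continuity in $(x,y)$) yields $H^s_{a,\,T^{nq}z} \to H^s_{a,a} = \mathrm{id}$. Hence, setting $w := (H^s_{a,z})^{-1}v \neq 0$, it suffices to show that $\widehat{(A^q(a))^n w}$ converges in $\proj$ to an eigen-direction of $A^q(a)$, because the projective action is continuous in the matrix.

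This last step is the classical top-eigenvalue argument. By the pinching assumption, the eigenvalues $\mu_1,\dots,\mu_d$ of $A^q(a)$ are simple with $|\mu_1| > \cdots > |\mu_d|$; let $e_1,\dots,e_d$ be the corresponding eigenvectors. Writing $w = \sum_i c_i\,e_i$ and letting $j$ be the smallest index with $c_j \neq 0$, one has
$$ (A^q(a))^n w \;=\; \mu_j^n\,\Bigl(c_j\,e_j \;+\; \sum_{i>j} c_i\,(\mu_i/\mu_j)^n\,e_i\Bigr), $$
and since $|\mu_i/\mu_j| < 1$ for $i > j$, the tail vanishes, so $\widehat{(A^q(a))^n w} \to \hat e_j$ in $\proj$. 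Combining this with $\hat H^s_{a,\,T^{nq}z} \to \mathrm{id}$ gives $\hat A^{nq}(z)\hat v \to \hat e_j$, an eigen-direction of $A^q(a)$, as desired. No step here is genuinely hard: the holonomy conjugation is a formal consequence of the invariance relation, the convergence of holonomies follows from their continuity, and the dominated-eigenvalue projective convergence is elementary. The only mildly delicate point is the global extension of the stable holonomy, which is standard.
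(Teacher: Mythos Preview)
Your proof is correct and follows essentially the same approach as the paper: conjugate $A^{nq}(z)$ to powers of $A^q(a)$ via stable holonomies, use that the holonomy factor on the left tends to the identity, and then invoke the elementary dominated-eigenvalue argument. The only cosmetic difference is that the paper, instead of extending $H^s_{a,z}$ to the global stable set, replaces $z$ by an iterate $z'=T^{lq}z\in\Wsloc(a)$ and absorbs the mismatch into the fixed factor $H^s_{z',a}\,A^{lq}(z)$; your extension $H^s_{a,z}:=A^m(z)^{-1}H^s_{T^ma,T^mz}A^m(a)$ is exactly the same manoeuvre written differently.
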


	\begin{proof} 
		Choosing $z'=T^{l q}(z)\in \Wsloc(a)$ for some $l\geq 0$, by characterization of the stable holonomies in Section~\ref{statements} we have
		$$ H^s_{T^{(n-l)q}(z'),a }\, A^{(n-l) q}(z')= A^{(n-l) q}(a)\, H^s_{z',a}. $$ 
		which implies that
		$$ H^s_{T^{n q}(z),a }\,  A^{n q}(z)= A^{q}(a)^{n-l}\,  H^s_{z',a}\, A^{l q}(z) . $$
		Thus, since $H^s_{T^{n q}(z),a }\to I$, setting  $\hat v':= \hat H^s_{z',a}\, \hat A^{l q}(z) \, \hat v$,
		$$ \dist( \hat A^{n q}(z)\, \hat v, \hat A^q(a)^{n-l}\, \hat v') \to 0 \quad \text{ in } \; \proj .$$
		Finally, because $A^{q}(a)$ has eigenvalues with multiplicity one and distinct absolute values, the sequence $\hat A^q(a)^{n-l}\, \hat v'$   converges to the eigen-direction associated with the eigenvalue of largest absolute value in the spectral decomposition of a non-zero vector $v'$ aligned with $\hat v'$.
	\end{proof}

	\begin{proposition}
		\label{other homoclinics}
		If  $A:X \to\GL$  satisfies the pinching and twisting (see Definition~\ref{pinching and twisting}) for some homoclinic points $z\in\Wuloc(a)$ and  $z'=T^l z\in \Wsloc(a)$ then it also satisfies this condition with $z_k=T^{-k q} z$ and $z'_m=T^{l+m q} z$  for all  $k, m\in\N$.
	\end{proposition}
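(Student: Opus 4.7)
The strategy is to compute the new transition map $\psi_{a,z_k,z'_m}$ explicitly and show that it differs from $\psi_{a,z,z'}$ only by multiplication on the left and right by powers of $A^q(a)$. Since pinching is a statement purely about $A^q(a)$, it will be automatic; and the twisting condition will survive because, in the eigenbasis of $A^q(a)$, multiplying by diagonal matrices with nonzero entries does not create any vanishing minors.

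First I would check the geometric setup: since $a=T^q(a)$, backward iterates of $z\in \Wuloc(a)$ stay in $\Wuloc(a)$, so $z_k := T^{-kq}z\in\Wuloc(a)$; similarly $z'_m := T^{mq}z' \in \Wsloc(a)$. One verifies that $T^{l+(k+m)q}z_k = z'_m$, so the "transition time" for the new pair is $l+(k+m)q$. Using the cocycle identity
\begin{equation*}
A^{l+(k+m)q}(z_k)=A^{mq}(z')\, A^l(z)\, A^{kq}(z_k),
\end{equation*}
the new transition map is
\begin{equation*}
\psi_{a,z_k,z'_m}= H^s_{z'_m,a}\,A^{mq}(z')\,A^l(z)\,A^{kq}(z_k)\,H^u_{a,z_k}.
\end{equation*}

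Next I would apply the equivariance of the invariant holonomies. Iterating the stable equivariance $A(y)\,H^s_{x,y}=H^s_{Tx,Ty}\,A(x)$ for $(x,y)=(a,z')$ and using $T^{mq}a=a$ yields $H^s_{z'_m,a}\,A^{mq}(z')=A^{mq}(a)\,H^s_{z',a}$. The analogous equivariance of unstable holonomies, applied to $(x,y)=(a,z)$ and $n=kq$ backward iterates, yields $A^{kq}(z_k)\,H^u_{a,z_k}=H^u_{a,z}\,A^{kq}(a)$. Plugging these into the previous display and collecting the middle factor $H^s_{z',a}\,A^l(z)\,H^u_{a,z}=\psi_{a,z,z'}$ gives
\begin{equation*}
\psi_{a,z_k,z'_m}\,=\,A^q(a)^m\,\psi_{a,z,z'}\,A^q(a)^k.
\end{equation*}

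Finally I would verify conditions (p) and (t) for the new data. The pinching condition (p) concerns only the eigenvalues of $A^q(a)$ and is untouched. For (t), let $g$ be the matrix of $\psi_{a,z,z'}$ in the eigenbasis $e_1,\ldots,e_d$ of $A^q(a)$; then $A^q(a)$ is diagonal, $D=\mathrm{diag}(\lambda_1,\ldots,\lambda_d)$, and the matrix of $\psi_{a,z_k,z'_m}$ in the same basis is $D^m g D^k$, whose $(i,j)$-entry is $\lambda_i^m\,g_{ij}\,\lambda_j^k$. For any index sets $I,J$ of equal cardinality, the corresponding minor equals $\bigl(\prod_{i\in I}\lambda_i^m\bigr)\bigl(\prod_{j\in J}\lambda_j^k\bigr)\det(g_{I,J})$, which is nonzero whenever the minor of $g$ is nonzero. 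Hence the twisting condition passes from $g$ to $D^m g D^k$, and $z_k,z'_m\in\supp(\mu)$ because $\supp(\mu)$ is $T$-invariant. The only subtlety I anticipate is bookkeeping with the holonomy equivariance; once the two identities above are nailed down, the rest is essentially algebra.
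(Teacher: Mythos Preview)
Your proposal is correct and follows essentially the same approach as the paper: both derive the identity $\psi_{a,z_k,z'_m}=A^q(a)^m\,\psi_{a,z,z'}\,A^q(a)^k$ from the holonomy equivariance relations, and then observe that the twisting condition is preserved. The only cosmetic difference is that the paper argues preservation of (t) by noting that the coordinate subspaces $\lspan\{e_j:j\in J\}$ are $A^q(a)$-invariant, whereas you use the equivalent minor formulation; your computation of the minors of $D^m g D^k$ is exactly the matrix translation of that invariance statement.
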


	\begin{proof}
		Using the holonomy relations in Section~\ref{statements}	
		\begin{align*}  
		& A^{m q}(y)\,H^s_{x,y} = H^s_{T^{m q}x, T^{m q}y} \, A^{m q}(x)\quad \text{ and  }\\
		& H^u_{x, y}\, A^{k q}(T^{-k q} x)  = A^{k q}(T^{-k q} y) \, H^u_{T^{-k q}x, T^{-k q}y} ,
		\end{align*}
		we obtain  
		\begin{equation}
		\label{other transition maps}
		\psi_{a, z_k, z_m'} = A^{m q}(a)\, \psi_{a, z, z'}\, A^{k q}(a) . 
		\end{equation}
		The conclusion follows because the subspaces  $\lspan\{e_j;j\in J\}$  in item (t) of Definition~\ref{pinching and twisting}  are $A^q(a)$-invariant.
	\end{proof}

	\begin{proposition}
		\label{Omegap dense in supp}
		For any $p\geq 1$ and  $(x^-,\hat v)\in  X^- \times \proj$, 
		$$\supp(m^-) \subseteq  \Omega_{K^p}(x^-, \hat v)  .$$
	\end{proposition}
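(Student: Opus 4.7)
The plan combines the base mixing established in Section~\ref{basemixing} with the convergence of fiber directions provided by Proposition~\ref{omega limit}, using the pinching--twisting periodic point $a$ of period $q$ as a bridge. Throughout, I would replace $p$ by a convenient multiple (e.g.\ $pq$) to align the iterates of $K^p$ with the period $q$, using that $\Omega_{K^{pq}}(\cdot)\subseteq \Omega_{K^p}(\cdot)$.

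\emph{Step 1: Reaching $(a^-, \eu(a^-))$ in the orbit closure.} Starting from $(x^-, \hat v)$, base mixing ($\supp(\mu^-) \subseteq \Omega_{K^p}(x^-)$, established in Proposition~\ref{Qop quasi-compact} and its proof) allows one to drive the base trajectory close to $z^-$, where $z \in \Wuloc(a) \cap \supp(\mu)$ is the homoclinic point from the pinching--twisting assumption (together with $z' = T^l z \in \Wsloc(a)$). By then appending the periodic symbols $a_1, \ldots, a_q$ repeatedly, the base approaches $a^-$ while, by Proposition~\ref{omega limit} applied along the stable trajectory to $a$, the fiber direction converges to some eigendirection $\hat \xi$ of $A^q(a)$. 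To ensure $\hat \xi = \eu(a^-)$, one invokes Proposition~\ref{other homoclinics}: the twisting condition applied to the family of transition maps $A^{mq}(a)\,\psi_{a,z,z'}\,A^{kq}(a)$ provides parameters $(k,m)$ whose composition moves any fixed $\hat v$ off every $A^q(a)$-invariant hyperplane corresponding to a subtop eigendirection. Iterating $A^q(a)$ afterwards then forces convergence to the top eigendirection $\eu(a^-)$.

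\emph{Step 2: Reaching all of $\supp(m^-)$ from $(a^-, \eu(a^-))$.} By the equivariance of $E^u$ under the cocycle (Proposition~\ref{constant oseledets}), the $K^p$-orbit of $(a^-, \eu(a^-))$ remains on the measurable graph $\{(y^-, \eu(y^-))\}$, because each step updates the fiber by $\hat A$ applied to a direction that already is $\eu$ at the current base point. The base component of this orbit is the Markov chain on $X^-$ with kernel from \eqref{base SDS}, whose time averages converge to $\mu^-$ by the base strong mixing of Proposition~\ref{Qop quasi-compact}. Since $m^-$ has disintegration $\delta_{\eu(x^-)}$ over $\mu^-$, applying Luzin's theorem to the Borel section $y^- \mapsto \eu(y^-)$ upgrades the base convergence to convergence in $X^- \times \proj$, in the sense sufficient to conclude that $\supp(m^-)$ is contained in the closure of this orbit. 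Forward-invariance of $\Omega_{K^p}(\cdot)$ then yields the desired inclusion $\supp(m^-) \subseteq \Omega_{K^p}(x^-, \hat v)$.

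The principal obstacle is Step 1: the twisting condition is algebraic (all minors of $g$ in the eigenbasis of $A^q(a)$ are nonzero), but to \emph{force} $\hat \xi = \eu(a^-)$ one must exhibit concrete parameters $(k,m)$ so that the resulting transition $A^{mq}(a)\,\psi_{a,z,z'}\,A^{kq}(a)$ sends any prescribed $\hat v$ off all the subtop invariant hyperplanes simultaneously. This demands combining the genericity furnished by twisting with the flexibility of Proposition~\ref{other homoclinics} via an openness/selection argument; the convergence and measurability issues in Step 2 are comparatively routine once Step 1 is in hand.
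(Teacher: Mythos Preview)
Your overall two--stage strategy (first reach $(a^-,\hat e_1)$, then spread to $\supp(m^-)$) is exactly the paper's. But you misidentify the difficulty and introduce an unnecessary detour.

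\emph{On Step 1.} The ``principal obstacle'' you flag is not one. Once Proposition~\ref{omega limit} has brought you to some eigendirection $\hat e_i$ of $A^q(a)$, a \emph{single} homoclinic excursion suffices: the twisting condition with $I=\{i\}$ and $J=\{2,\ldots,d\}$ says precisely that $\psi_{a,z,z'}(e_i)\notin \lspan\{e_2,\ldots,e_d\}$, so $\psi_{a,z,z'}(e_i)$ has a nonzero $e_1$--component. Iterating $A^q(a)$ afterward then forces convergence to $\hat e_1$. No selection over $(k,m)$ is needed; the paper invokes Proposition~\ref{other homoclinics} only to arrange $z^-=a^-$ and $l\in pq\,\Z$ so that the homoclinic block is compatible with the step size, not to manufacture genericity. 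Your worry about ``moving $\hat v$ off every subtop hyperplane simultaneously'' disappears once you first land on an eigendirection (which Proposition~\ref{omega limit} guarantees), because then only the single hyperplane $\lspan\{e_2,\ldots,e_d\}$ matters.

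\emph{On Step 2.} The Luzin argument is the wrong tool for a topological conclusion, and your sketch has a gap: base density of the $K^p$--orbit of $a^-$ in $\supp(\mu^-)$, combined with continuity of $e_u$ on a Luzin set $K_\varepsilon$, does not by itself yield density of the graph orbit in $\supp(m^-)$---you would still need the base orbit to meet $K_\varepsilon$ densely, which is not automatic since $K_\varepsilon$ may have empty interior. The paper bypasses this entirely: for any $u$--regular admissible $y^-$ in the global unstable set $W^u(a)$ one has $P_-(T^{-npq}y)=a^-$ for large $n$, so by equivariance of $E^u$ the point $(y^-,\eu(y^-))$ lies in $\Omega_{K^p}(a^-,\hat e_1)$ \emph{exactly}, not just in its closure. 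Such points are dense in $\supp(m^-)$, and the inclusion follows. This is both shorter and avoids the measurability issue.

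Finally, you should state explicitly (as the paper does) the transitivity step $\bigl[(y^-,\hat w)\in\Omega_{K^p}(x^-,\hat v)\ \Rightarrow\ \Omega_{K^p}(y^-,\hat w)\subseteq\Omega_{K^p}(x^-,\hat v)\bigr]$, since you use it implicitly when chaining the stages together.
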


	\begin{proof}
		By continuity it is enough to check this identity for a dense set of pairs $(x^-,\hat v)\in \supp(m^-)$. Hence we can assume that $x^-$ is an admissible sequence.	
		Recalling~\eqref{def Omega K}, the set $\Omega_{K^p}(x^-,\hat v)$ is the topological closure of the  set of all pairs 
		$$ ((x^-, i_0,\ldots, i_{n-1} ), \hat A_n (x^-, i_0,\ldots, i_{n-1} )\, \hat v )$$
		where $n$ is a multiple of $p$, $i_1,\ldots, i_{n-1}\in \{1,\ldots, \ell\}$,
		$(x^-, i_0,\ldots, i_{n-1} )$ is an admissible sequence and
		$$ A_n (x^-, i_0,\ldots, i_{n-1} ) :=A (x^-, i_0,\ldots, i_{n-1} )\, \ldots \, A(x^-, i_0)\, A(x^-) . $$
		Consider a periodic point $a=T^q(a)$ and associated 
		homoclinic  points $z\in W^s(a)\cap W^u(a)$ and  $z'=T^l(z)$  satisfying the pinching and twisting condition of  Definition~\ref{pinching and twisting}. 
		
		We break the proof in three steps:

		{\em Step 1:} \quad   $(y^-, \hat w)\in \Omega_{K^p}(x^-, \hat v)$ \; $\Rightarrow$\; 
		$ \Omega_{K^p}(y^-, \hat w)\subseteq  \Omega_{K^p}(x^-, \hat v)$.
		
		\smallskip

		This follows by continuity of the map $x^-\mapsto  A_n (x^-, i_0,\ldots, i_{n-1} )$.

		\bigskip
		
		For the next step, let $\{e_1,\ldots, e_d\}$ be an eigen-basis of  $A^q(a)$ respectively associated with eigenvalues $\lambda_1,\ldots, \lambda_d$ ordered in a way that $\abs{\lambda_i}>\ldots >\abs{\lambda_1}$.
		
		\smallskip
		
		{\em Step 2:} \quad $(a^-, \hat e_1)\in \Omega_{K^p}(x^-, \hat v)$.
		
		\smallskip
		
		Take $x\in W^s(a)$ admissible such that $x_j=x^-_j$, $\forall j\geq 1$. By Proposition~\ref{omega limit} 
		$\hat A^{n p q}(x)\, \hat v$ converges to $\hat e_i$
		for some $i=1,\ldots, d$. Since
		$$(P_-(T^{n p q}(x)), \hat A^{n p q}(x)\,\hat v)\in \Omega_{K^p}(x^-, \hat v), \quad  \forall n\in\N $$
		taking the limit as $n\to\infty$ we get $(a^-,\hat e_i)\in \Omega_{K^p}(x^-, \hat v)$. By Step 1 it is now enough to prove that
		$(a^-,\hat e_1)\in \Omega_{K^p}(a^-, \hat e_i)$.
		Because of Proposition~\ref{other homoclinics} we can replace $z$ by another point in the same orbit, still satisfying the pinching and twisting, and such that $z^-=a^-$. We can also assume that
		$l=k p q$, i.e., $z'=T^{k p q}(z)$ for some $k\geq 1$. By the twisting condition $\psi_{a,z,z'}(e_i)$ is transversal to
		$\lspan\{e_2,\ldots, e_d\}$.
		Defining $z_n:=T^{n p q+ k p q}(z)=T^{n p q}(z')$
		we have
		$$( z_n^-, \hat A^{(n+k) p q}(z)\,\hat e_i)\in \Omega_{K^p}(a^-, \hat e_i), \quad  \forall n\in\N .$$
		Finally using that $H^s_{z_n,a}\to I$, relation~\eqref{other transition maps}  and that the spectral decomposition of $\psi_{a,z,z'}(e_i)$ has a non zero component along $e_1$,
		\begin{align*}
		\lim_{n\to \infty} \hat A^{(n+k) p q}(z)\,\hat e_i &=
		\lim_{n\to \infty} \hat \psi_{a,z,z_n}(\hat e_i)\\
		&=
		\lim_{n\to \infty} \hat A(a)^{n p q}\,\hat \psi_{a,z,z'}(\hat e_i) = \hat e_1 ,
		\end{align*}
		which proves that $(a^-,\hat e_1)\in \Omega_{K^p}(a^-, \hat e_i)$.
		
		\bigskip

		{\em Step 3:} \quad $(y^-, \hat e^u(y^-))\in \Omega_{K^p}(a^-, \hat e_1)$, where $y^-$ is a $u$-regular admissible sequence in $ W^u(a)$, and    $\hat e^u(y^-)$ represents the Oseledets unstable direction $E^u(y^-)$ associated to the largest Lyapunov exponent.

		\smallskip

		Take $y\in X$ such that $P_-(y)=y^-$.
		Because $y\in W^u(a)$, we have $P_-(T^{-npq}(y))=a^-$, for all large $n\in\N$. Therefore, using the  invariance of
		the Oseledets direction $\hat e^u$,  
		\begin{align*}
		(y^-, \hat e^u(y^-)) \in \; & \Omega_{K^p}(P_-(T^{-npq}(y)), \hat e^u(P_-(T^{-npq}(y)))) \\
		&= \Omega_{K^p}(a^-, \hat e^u(a^-)) = \Omega_{K^p}(a^-, \hat e_1) .
		\end{align*}

		\bigskip

		To finish combine the conclusions of steps 2 and 3,
		and use the fact proved in Step 1 to derive
		that $(y^-, \hat e^u(y^-)) \in  \Omega_{K^p}(x^-, \hat v)$ for every  $u$-regular admissible sequence  $y^-$ in $ W^u(a)$. Since these points are dense in $\supp(m^-)$ it follows that
		$\supp(m^-)\subseteq  \Omega_{K^p}(x^-, \hat v)$.
	\end{proof}

	\begin{definition}
		\label{def kappa_alpha(An,mu)}
		Define for each $0<\alpha\leq 1$ and $n\in\N$,
		$$ \kappa_\alpha(A^n,\mu):= \sup_{x^-\in  X^-} \sup_{\hat v\neq \hat v'}
		\sum_{i_1=1}^\ell \cdots \sum_{i_n=1}^\ell  p_{i_1}\,\cdots p_{i_n}\, \left(  \frac{d( \hat A_{i_1, \ldots, i_n} \, \hat v, \hat A_{i_1, \ldots, i_n} \, \hat v') }{d(\hat v, \hat v')} \right)^\alpha   .
		$$
		where  $A_{i_1, \ldots, i_n}  := A(x^-_{n-1}) \cdots \hat A(x^-_0)$, $x^-_0:=x^-$, $x^-_{j}:=(x^-_{j-1}, i_j)$  and  $p_{i_j}:= p_{i_j}(x^-_{j-1})$ for $j=1,\ldots, n$.
	\end{definition}

	\begin{lemma}
		For any $\varphi\in C^0( X^-\times\proj)$,
		$$ v_\alpha^\Pp(\Qop^n\varphi)\leq \kappa_\alpha(A^n,\mu)\, v_\alpha^\Pp(\varphi) . $$
	\end{lemma}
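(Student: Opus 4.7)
The plan is direct: expand $\Qop^n$ explicitly, apply the triangle inequality, and recognize the expression for $\kappa_\alpha(A^n,\mu)$.

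First I would verify by induction on $n$ that for $\varphi\in C^0(X^-\times \proj)$,
\begin{equation*}
(\Qop^n\varphi)(x^-,\hat p) \;=\; \sum_{i_1=1}^\ell\cdots\sum_{i_n=1}^\ell p_{i_1}\cdots p_{i_n}\,\varphi\bigl(x^-_n,\,\hat A_{i_1,\ldots,i_n}\hat p\bigr),
\end{equation*}
with the notation $x^-_0:=x^-$, $x^-_j:=(x^-_{j-1},i_j)$, $p_{i_j}=p_{i_j}(x^-_{j-1})$, and $A_{i_1,\ldots,i_n}=A(x^-_{n-1})\cdots A(x^-_0)$, exactly as in Definition~\ref{def kappa_alpha(An,mu)}. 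The base case $n=1$ is the definition of $\Qop$, and the inductive step follows by applying $\Qop$ to the inductive formula, using that $\Qop$ acts by integrating against $p_i$ against the inverse branch $(\,\cdot\,,i)$.

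Next, fix $x^-\in X^-$ and $\hat p\neq \hat q$ in $\proj$. Subtract the two expansions for $\Qop^n\varphi$ at $(x^-,\hat p)$ and $(x^-,\hat q)$ and bound termwise using the definition of $v_\alpha^{\Pp}$ in the fiber variable:
\begin{align*}
\bigl|\varphi(x^-_n,\hat A_{i_1,\ldots,i_n}\hat p) - \varphi(x^-_n,\hat A_{i_1,\ldots,i_n}\hat q)\bigr|
&\leq v_\alpha^{\Pp}(\varphi)\,\delta\bigl(\hat A_{i_1,\ldots,i_n}\hat p,\hat A_{i_1,\ldots,i_n}\hat q\bigr)^{\alpha}.
\end{align*}
Factoring out $\delta(\hat p,\hat q)^\alpha$ and using that $p_{i_j}\geq 0$ with $\sum_{i_j} p_{i_j}(x^-_{j-1})=1$ (so the coefficients are a legitimate probability), the triangle inequality gives
\begin{align*}
\bigl|(\Qop^n\varphi)(x^-,\hat p)-(\Qop^n\varphi)(x^-,\hat q)\bigr|
&\leq v_\alpha^{\Pp}(\varphi)\,\delta(\hat p,\hat q)^\alpha \sum_{i_1,\ldots,i_n} p_{i_1}\cdots p_{i_n}\,\left(\frac{\delta(\hat A_{i_1,\ldots,i_n}\hat p,\hat A_{i_1,\ldots,i_n}\hat q)}{\delta(\hat p,\hat q)}\right)^{\!\alpha}.
\end{align*}

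Dividing by $\delta(\hat p,\hat q)^\alpha$ and taking the supremum over $\hat p\neq \hat q$ and over $x^-\in X^-$, the double sum is majorized precisely by $\kappa_\alpha(A^n,\mu)$ as defined. This yields $v_\alpha^{\Pp}(\Qop^n\varphi)\leq \kappa_\alpha(A^n,\mu)\,v_\alpha^{\Pp}(\varphi)$. There is no real obstacle here; the only care point is matching notations (writing the base point update $x^-_j=(x^-_{j-1},i_j)$ correctly so that the iterated expansion is consistent with the definition of $\kappa_\alpha$).
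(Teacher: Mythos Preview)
Your proof is correct and follows essentially the same approach as the paper: expand $(\Qop^n\varphi)(x^-,\hat p)$ as the iterated sum, subtract the values at two projective points, bound each term by $v_\alpha^{\Pp}(\varphi)$ times the $\alpha$-power of the projective distance of the images, factor out $\delta(\hat p,\hat q)^\alpha$, and take the supremum to recover $\kappa_\alpha(A^n,\mu)$. The only cosmetic difference is that you spell out the inductive verification of the formula for $\Qop^n$, which the paper simply asserts.
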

	
	\begin{proof}
		Using the above notation 
		\begin{align*}
		(\Qop^n\varphi)(x^-, \hat p) = \sum_{i_1=1}^\ell \cdots \sum_{i_n=1}^\ell  p_{i_1}\,\cdots p_{i_n}\, \varphi\left(x_n^-, \hat A(x^-_{n-1}) \cdots \hat A(x^-_0)\, \hat p\right)  
		\end{align*}
		and hence 
		\begin{align*}
		& | (\Qop^n\varphi)(x^-, \hat v) - (\Qop^n\varphi)(x^-, \hat v') |  \\
		&\qquad  \leq v_\alpha^\Pp(\varphi)\, \sum_{i_1=1}^\ell \cdots \sum_{i_n=1}^\ell  p_{i_1}\,\cdots p_{i_n}\,   d\left( \hat A_{i_1, \ldots, i_n} \, \hat v, \hat A_{i_1, \ldots, i_n} \, \hat v'\right)^\alpha  \\
		&\qquad  \leq v_\alpha^\Pp(\varphi)\, d(\hat v, \hat v')^\alpha \, \sum_{i_1=1}^\ell \cdots \sum_{i_n=1}^\ell  p_{i_1}\,\cdots p_{i_n}\, \left(  \frac{d( \hat A_{i_1, \ldots, i_n} \, \hat v, \hat A_{i_1, \ldots, i_n} \, \hat v') }{d(\hat v, \hat v')} \right)^\alpha  \\
		&\qquad \leq v^\Pp_\alpha(\varphi)\, \kappa_\alpha(A^n,\mu) \, d(\hat v, \hat v')^\alpha .
		\end{align*}
		Thus dividing by $d(\hat v, \hat v')^\alpha$ and taking the sup in $x^-\in X^-$ and $\hat v \neq \hat v'$, the inequality follows.
	\end{proof}

	\begin{lemma}
		For every $n,m\in\N$
		$$ \kappa_\alpha(A^{n+m},\mu)\leq \kappa_\alpha(A^n,\mu)\,\kappa_\alpha(A^{m},\mu). $$
	\end{lemma}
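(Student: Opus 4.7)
My plan is to establish submultiplicativity by splitting the $(n+m)$-fold sum into an outer sum over the first $n$ indices and an inner sum over the last $m$ indices, exploiting the cocycle structure both of the matrix product $A_{i_1,\ldots,i_{n+m}}$ and of the weights $p_{i_1}\cdots p_{i_{n+m}}$.

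First I record the factorization
$$ A_{i_1, \ldots, i_{n+m}}(x^-) = A_{i_{n+1}, \ldots, i_{n+m}}(x^-_n)\, A_{i_1, \ldots, i_n}(x^-),$$
which is immediate from the recursive definition $x^-_j=(x^-_{j-1},i_j)$. Correspondingly, the weight splits as $p_{i_1}\cdots p_{i_{n+m}} = (p_{i_1}\cdots p_{i_n})\,(p_{i_{n+1}}\cdots p_{i_{n+m}})$, with the second factor evaluated along the already-built sequence $x^-_n,(x^-_n,i_{n+1}),\ldots$

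Introducing the intermediate points $\hat w := \hat A_{i_1,\ldots,i_n}\,\hat v$ and $\hat w' := \hat A_{i_1,\ldots,i_n}\,\hat v'$ (which are distinct whenever $\hat v \neq \hat v'$, since $A_{i_1,\ldots,i_n}\in \GL$), multiplying and dividing by $d(\hat w, \hat w')^\alpha$ yields the telescoping identity
$$ \left(\frac{d(\hat A_{i_1,\ldots,i_{n+m}} \hat v, \hat A_{i_1,\ldots,i_{n+m}} \hat v')}{d(\hat v, \hat v')}\right)^\alpha = \left(\frac{d(\hat A_{i_{n+1},\ldots,i_{n+m}} \hat w, \hat A_{i_{n+1},\ldots,i_{n+m}} \hat w')}{d(\hat w, \hat w')}\right)^\alpha \!\!\cdot\! \left(\frac{d(\hat w, \hat w')}{d(\hat v, \hat v')}\right)^\alpha,$$
where the first factor is computed with respect to the shifted base point $x^-_n$.

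Distributing the sums accordingly, the inner sum over $i_{n+1},\ldots,i_{n+m}$ is, for each fixed triple $(x^-_n,\hat w,\hat w')$, bounded by $\kappa_\alpha(A^m,\mu)$ directly from its definition as a supremum over all base points and all distinct projective pairs. Pulling this factor out, the remaining outer sum over $i_1,\ldots,i_n$ is then bounded by $\kappa_\alpha(A^n,\mu)$ for the same reason. Taking the supremum over $x^-$ and over $\hat v\neq\hat v'$ on the left gives the claimed inequality. I do not anticipate a serious obstacle; the only point requiring mild care is checking the factorization of the weights along the shifted sequence, which is however transparent from the definition.
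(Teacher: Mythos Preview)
Your argument is correct and is exactly the standard submultiplicativity computation that the paper has in mind: the paper's own proof is simply ``Straightforward (see~\cite[Lemma 5.6]{DK-book})'', and the reference carries out precisely the telescoping and split-sum argument you wrote. There is nothing to add.
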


	\begin{proof}
		Straightforward (see~\cite[Lemma 5.6]{DK-book}).
	\end{proof}

	\begin{proposition}
		\label{kappa alpha}
		There exist $n\in\N$ large enough and $0<\alpha<1$ small enough such that
		$$ \kappa_\alpha(A^{n},\mu)<1. $$
	\end{proposition}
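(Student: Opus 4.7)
The plan is to adapt Le Page's classical approach to the Markov-chain setting of this paper. The first observation is that, since $A$ factors through $P^-$, the weighted sum defining $\kappa_\alpha(A^n,\mu)$ is exactly an integral against the unstable disintegration: for every $x^-$,
$$\sum_{i_1,\ldots,i_n} p_{i_1}\cdots p_{i_n}\left(\frac{d(\hat A_{i_1,\ldots,i_n}\hat v,\hat A_{i_1,\ldots,i_n}\hat v')}{d(\hat v,\hat v')}\right)^\alpha = \int_{W^u_{\rm loc}(x^-)}\!\!\left(\frac{d(\hat A^n(y)\hat v,\hat A^n(y)\hat v')}{d(\hat v,\hat v')}\right)^\alpha d\muxminus(y),$$
because the weights $p_{i_j}(x^-_{j-1})$ realize precisely the conditional measure of $\muxminus$ on cylinders of $W^u_{\rm loc}(x^-)$ refined by the first $n$ forward coordinates.

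The next step is to compute the expected logarithm of the projective contraction ratio and show it is strictly negative for large $n$, uniformly in $(x^-,\hat v,\hat v')$. To this end, I would use the algebraic identity
$$\log\frac{d(\hat g\hat v,\hat g\hat v')}{d(\hat v,\hat v')} = \log\frac{\norm{\wedge^2 g(v\wedge v')}}{\norm{v\wedge v'}} - \log\frac{\norm{g v}}{\norm{v}} - \log\frac{\norm{g v'}}{\norm{v'}}$$
with $g=A^n(y)$, and apply Proposition~\ref{uniform convergence lemma} both to $A$ and to its second exterior power $\wedge^2 A$ (which is again fiber bunched, with simple top exponent $L_1(A,\mu)+L_2(A,\mu)$ guaranteed by the typicality hypothesis on $A$). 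Averaging the identity against $\muxminus$ and passing to the limit gives
$$\frac{1}{n}\int_{W^u_{\rm loc}(x^-)}\log\frac{d(\hat A^n(y)\hat v,\hat A^n(y)\hat v')}{d(\hat v,\hat v')}\,d\muxminus(y) \ \longrightarrow\ L_2(A,\mu) - L_1(A,\mu) < 0$$
uniformly in $(x^-,\hat v,\hat v')$, where the strict gap is precisely what the $1$-typicality of $A$ delivers via~\cite{BoV04}.

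To convert this into an estimate on the $\alpha$-moment, fix $n_0$ large enough that the above integral is bounded above by $-c\,n_0$ for some $c>0$ uniformly. The integrand itself is uniformly bounded in absolute value by a constant $R=R(n_0,\norm{A}_\infty,\norm{A^{-1}}_\infty)$. Using the elementary inequality $e^{\alpha s}\leq 1+\alpha s + \alpha^2 s^2 e^{\alpha\abs{s}}$ for $\abs{s}\leq R$ and integrating, one obtains
$$\kappa_\alpha(A^{n_0},\mu) \leq 1 - \alpha c\,n_0 + \alpha^2 R^2 e^{\alpha R},$$
which is $<1$ once $\alpha>0$ is chosen small enough.

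The main technical obstacle is the uniform convergence step for $\wedge^2 A$. Proposition~\ref{uniform convergence lemma} as stated relies on the transversality $\muyminus(\{y: p\in E^s_y\})=0$ for every $p$, which is extracted from the pinching/twisting of $A$ via the $s$-state analysis of~\cite{AvV1}. Applying it to $\wedge^2 A$ requires that the analogous $s$-state on $\mathbb{P}(\wedge^2\mathbb{R}^d)$ does not charge the hyperplane sections $V_{\hat w}$, which in turn is a consequence of the pinching/twisting for $\wedge^2 A$, that is, of the \emph{typicality} of $A$. Making sure this extension is available (either by carefully reproducing the $s$-state argument in the exterior power, or by observing that the argument only uses eigenvalue simplicity and genericity of a single transition map, which the twisting condition on $A$ already supplies for the Plücker embedding) is the place where the bulk of the work concentrates.
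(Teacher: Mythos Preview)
Your approach is the Le Page argument and is exactly what the paper intends by ``adapt~\cite[Lemma 2]{BD}'': rewrite the weighted sum as an integral against $\muxminus$, control the expected logarithmic projective contraction ratio, and finish with the small-$\alpha$ Taylor trick.

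The one point to sharpen is the handling of the $\wedge^2 A$ term, where you worry about needing Proposition~\ref{uniform convergence lemma} for the exterior power and hence full typicality. That is stronger than required, and your fallback suggestion (that $1$-typicality already yields twisting for $\wedge^2 A$ via the Pl\"ucker embedding) is not correct in general. The fix is simpler: in the identity
\[
\log\frac{d(\hat g\hat v,\hat g\hat v')}{d(\hat v,\hat v')} = \log\frac{\norm{\wedge^2 g(v\wedge v')}}{\norm{v\wedge v'}} - \log\frac{\norm{g v}}{\norm{v}} - \log\frac{\norm{g v'}}{\norm{v'}},
\]
only the last two terms need the \emph{exact} uniform limit $L_1(A,\mu)$ from Proposition~\ref{uniform convergence lemma}. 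For the first term an \emph{upper bound}
\[
\limsup_{n}\ \sup_{x^-}\ \frac{1}{n}\int_{W^u_{\rm loc}(x^-)} \log\norm{\wedge^2 A^n(y)}\, d\muxminus(y) \le L_1(A,\mu)+L_2(A,\mu)
\]
is enough, and this follows from subadditivity of $n\mapsto\log\norm{\wedge^2 A^n}$ together with the already established base strong mixing (Proposition~\ref{Qop quasi-compact}): fix $m$ with $\frac{1}{m}\int\log\norm{\wedge^2 A^m}\,d\mu$ close to $L_1+L_2$, break $n$ into blocks of size $m$, and use that $\Qop^{j}\phi_m\to\int\phi_m\,d\mu^-$ uniformly for the H\"older function $\phi_m(x^-):=\int\log\norm{\wedge^2 A^m}\,d\muxminus$. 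This keeps the proof entirely within the $1$-typicality hypothesis of Section~\ref{fibermixing}.
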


	\begin{proof}
		Using	Proposition~\ref{uniform convergence lemma}, make a straightforward adaptation of the proof of~\cite[Lemma 2]{BD}.
	\end{proof}

	\begin{remark}
		Fixing $n\in\N$ and $0<\alpha <1$, the measurement
		$\kappa_\alpha(A^{n},\mu)$ depends continuously on $A$.
		In particular, the previous condition defines an open set in the space of
		fiber-bunched H\"older continuous cocycles.
	\end{remark}
	
	\begin{corollary}
		\label{v alpha Proj varphi ineq}
		There is $0<\sigma <1$ and $C<\infty$ such that for  $\varphi\in C^0( X^-\times \proj)$,
		$$ v_\alpha^\Pp(\Qop^n\varphi)\leq C\,\sigma^n \,v_\alpha^\Pp(\varphi) . $$
	\end{corollary}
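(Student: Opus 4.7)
The plan is to combine the three preceding results in a standard submultiplicativity-based argument. By Proposition~\ref{kappa alpha}, we may fix $n_0 \in \N$ and the exponent $0<\alpha<1$ small enough that $\rho := \kappa_\alpha(A^{n_0},\mu) < 1$. By the submultiplicativity lemma applied iteratively, $\kappa_\alpha(A^{k n_0},\mu) \leq \rho^k$ for every $k\in\N$.

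For a general $n \in \N$, I would write $n = k n_0 + r$ with $0 \leq r < n_0$ and use submultiplicativity once more to obtain
\[
\kappa_\alpha(A^n,\mu) \leq \kappa_\alpha(A^{k n_0},\mu)\, \kappa_\alpha(A^r,\mu) \leq M\, \rho^k,
\]
where $M := \max_{0 \leq r < n_0} \kappa_\alpha(A^r,\mu) < \infty$ (finiteness being clear from the definition, as one controls $d(\hat A \hat v,\hat A\hat v')$ by the finitely many matrices $A_{i_1,\ldots,i_r}$ times $d(\hat v,\hat v')$). Setting $\sigma := \rho^{1/n_0} \in (0,1)$ and $C := M\, \rho^{-1}$, this gives $\kappa_\alpha(A^n,\mu) \leq C\, \sigma^n$ for all $n\in\N$.

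Applying the preceding lemma $v_\alpha^\Pp(\Qop^n\varphi)\leq \kappa_\alpha(A^n,\mu)\, v_\alpha^\Pp(\varphi)$ then yields
\[
v_\alpha^\Pp(\Qop^n\varphi)\leq C\,\sigma^n\, v_\alpha^\Pp(\varphi),
\]
as claimed. There is no real obstacle here; the only thing to be careful about is bookkeeping the constants and the finiteness of $M$, which rests on the uniform bound of the matrices $A(x^-)$ on the compact space $X^-$ together with the H\"older-contraction inequality already used in the preceding lemma. All the substantive work has been done in Proposition~\ref{kappa alpha}, so this corollary is essentially a packaging step.
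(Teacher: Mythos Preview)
Your argument is correct and is exactly the intended one: the paper states this corollary without proof because it follows immediately from the three preceding results (the bound $v_\alpha^\Pp(\Qop^n\varphi)\leq \kappa_\alpha(A^n,\mu)\, v_\alpha^\Pp(\varphi)$, the submultiplicativity of $\kappa_\alpha$, and Proposition~\ref{kappa alpha}) via the standard division-with-remainder bookkeeping you wrote out.
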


	\begin{proposition}
		\label{v alpha Sigma Qop varphi ineq }
		Choosing $0<\alpha\leq 1$ according to Propositions~\ref{p alpha Holder} and~\ref{kappa alpha}, for any $\varphi\in C^\alpha( X^-\times\proj)$ and $n\geq 1$,
		$$ \valfaSigma(\Qop^n \varphi) \leq 2^{-n\,\alpha}\, \valfaSigma(\varphi) + \frac{v_\alpha(p)}{1-2^{-\alpha}}\, \norm{\varphi}_\infty + n  C M \beta^{n-1} \, v_\alpha^\Pp(\varphi)  $$
		where $\beta=\max\{2^{-\alpha}, \sigma\}<1$ and $M=M(\norm{A}_\infty)<\infty$.
	\end{proposition}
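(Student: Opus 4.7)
The plan is to reduce the claim to a single-step estimate for $\valfaSigma(\Qop \psi)$ and then iterate $n$ times, invoking Corollary~\ref{v alpha Proj varphi ineq} at each step to control the $v_\alpha^\Pp$ seminorm of the intermediate iterates. The target inequality essentially forces this structure: the first two summands mirror Proposition~\ref{v alpha Qop varphi ineq }, while the final term $nCM\beta^{n-1}v_\alpha^\Pp(\varphi)$ records the accumulated ``cross'' contribution from the projective direction over $n$ applications of $\Qop$.

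The first step is to establish the one-step bound
\[
\valfaSigma(\Qop \psi) \le 2^{-\alpha}\, \valfaSigma(\psi) + v_\alpha(p)\, \norm{\psi}_\infty + M\, v_\alpha^\Pp(\psi),
\]
with $M=M(\norm{A}_\infty)$. Given $x^-, y^- \in X^-$ and $\hat p \in \proj$, I would decompose each summand of $(\Qop\psi)(x^-, \hat p) - (\Qop\psi)(y^-, \hat p)$ into three pieces: (I) $p_i(x^-)[\psi((x^-,i),\hat A(x^-)\hat p)-\psi((y^-,i),\hat A(x^-)\hat p)]$, measuring the change in the $X^-$-coordinate of $\psi$; (II) $p_i(x^-)[\psi((y^-,i),\hat A(x^-)\hat p)-\psi((y^-,i),\hat A(y^-)\hat p)]$, measuring the change in the projective coordinate coming from replacing $\hat A(x^-)$ by $\hat A(y^-)$; (III) $[p_i(x^-)-p_i(y^-)]\,\psi((y^-,i),\hat A(y^-)\hat p)$, the error from the differing transition probabilities. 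Summing (I) across $i$ and using $d((x^-,i),(y^-,i))\le d(x^-,y^-)/2$ gives the coefficient $2^{-\alpha}\valfaSigma(\psi)$; summing (III) gives $v_\alpha(p)\norm{\psi}_\infty$ exactly as in the proof of Proposition~\ref{v alpha Qop varphi ineq }. For (II), I would use the projective Lipschitz estimate
\[
\delta(\hat A(x^-)\hat p,\hat A(y^-)\hat p)\le \norm{A(x^-)^{-1}}\,\norm{A(x^-)-A(y^-)},
\]
combined with the $\alpha$-H\"older regularity of $A$ and the uniform bounds on $\norm{A^{\pm 1}}_\infty$, to absorb all the relevant data into a single constant $M=M(\norm{A}_\infty)$ multiplying $v_\alpha^\Pp(\psi)\,d(x^-,y^-)^\alpha$.

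The iteration is then a standard telescoping. Applying the one-step estimate with $\psi=\Qop^{n-1}\varphi$, using the Markov contraction $\norm{\Qop^{n-1}\varphi}_\infty\le\norm{\varphi}_\infty$ and the geometric decay $v_\alpha^\Pp(\Qop^{n-1}\varphi)\le C\sigma^{n-1}\,v_\alpha^\Pp(\varphi)$ supplied by Corollary~\ref{v alpha Proj varphi ineq}, a routine induction gives
\[
\valfaSigma(\Qop^n\varphi)\le 2^{-n\alpha}\valfaSigma(\varphi)+v_\alpha(p)\norm{\varphi}_\infty\sum_{j=0}^{n-1}2^{-j\alpha}+MC\,v_\alpha^\Pp(\varphi)\sum_{j=0}^{n-1}2^{-j\alpha}\sigma^{n-1-j}.
\]
The first geometric series is bounded by $(1-2^{-\alpha})^{-1}$ and the second by $n\beta^{n-1}$ with $\beta=\max(2^{-\alpha},\sigma)<1$, which reproduces exactly the stated inequality.

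The hard part will be the clean verification of term (II). Since $A$ is only $\alpha$-H\"older in $x^-$ and $v_\alpha^\Pp$ measures $\alpha$-H\"older regularity of $\psi$ in the projective direction, a naive composition of these two moduli would produce $d(x^-,y^-)^{\alpha^2}$ rather than the $d(x^-,y^-)^\alpha$ scaling that the announced bound requires. Reconciling this with a clean $d^\alpha$ dependence, while packaging the remaining H\"older data of $A$ into a single constant $M$ with the claimed dependence on $\norm{A}_\infty$, is the most delicate bookkeeping in the proof.
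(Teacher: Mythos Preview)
Your plan is exactly the paper's proof: the same three-term splitting of $(\Qop\psi)(x^-,\hat p)-(\Qop\psi)(y^-,\hat p)$ for the one-step bound, followed by the same induction using $\norm{\Qop^{j}\varphi}_\infty\le\norm{\varphi}_\infty$ and Corollary~\ref{v alpha Proj varphi ineq}.

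The concern you raise at the end is genuine, and the paper glosses over precisely this point. In the paper's proof, term (II) is asserted to be at most $v_\alpha^\Pp(\varphi)\,M\,d(x^-,y^-)^\alpha$ with no justification. But applying $v_\alpha^\Pp$ gives a factor $\delta(\hat A(x^-)\hat p,\hat A(y^-)\hat p)^\alpha$, and since $A$ is only $\alpha$-H\"older one honestly obtains $d(x^-,y^-)^{\alpha^2}$; after multiplying by $2^{k\alpha}$ this leaves an unbounded factor $2^{k\alpha(1-\alpha)}$, so the inequality as stated cannot hold unless $x^-\mapsto \hat A(x^-)\hat p$ is Lipschitz. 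The paper's claimed dependence $M=M(\norm{A}_\infty)$ is also too optimistic: $M$ must absorb the H\"older data of $A$, not just its sup norm.

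A clean repair is to decouple the two exponents: keep $\alpha$ in the projective direction (this is what Proposition~\ref{kappa alpha} dictates) but measure regularity in the $X^-$ variable with an exponent $\alpha'$ not exceeding $\alpha$ times the H\"older exponent of $A$. With the mixed seminorm $v_{\alpha'}^X+v_\alpha^\Pp$ your one-step estimate becomes
\[
v_{\alpha'}^X(\Qop\psi)\le 2^{-\alpha'}\,v_{\alpha'}^X(\psi)+v_{\alpha'}(p)\,\norm{\psi}_\infty+M\,v_\alpha^\Pp(\psi),
\]
the iteration goes through verbatim with $\beta=\max\{2^{-\alpha'},\sigma\}$, and Corollary~\ref{Lasota-York type ineq}, which is the only downstream consumer, survives on $C^{\alpha'}(X^-\times\proj)$.
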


	\begin{proof} For each $k\in\N$ define  $v_k(\varphi)$ to be the sup of all oscillations
		$\abs{\varphi(x^-, \hat p)-\varphi(y^-, \hat p)}$
		such that $x^-$ and $y^-$ have the same $k$ coordinates. Then as before
		$$ \valfaSigma(\varphi)=\sup\{2^{k\alpha}\, v_k(\varphi)\colon k\in\N\} .$$ 	
		Assume $x^-, y^-\in  X^-$ with $x^-_{j}=y^-_{j},\, \forall\, j \geq -k $. Then for any $i\in \Sigma$ the first coordinates of the sequences
		$(x^-,i)$ and $(y^-,i)$ match up to order $k+1$. Hence
		
		\begin{align*}
		& \abs{(\Qop \varphi)(x^-,\hat p) - (\Qop \varphi)(y^-,\hat p) } 
		\\
		&\quad \leq  \sum_{i=1}^\ell \abs{ p_i(x^-)\, \varphi((x^-,i), \hat A(x^-) \hat p) -   p_i(y^-)\, \varphi((y^-,i), \hat A(y^-) \hat p) }\\
		&\quad \leq  \sum_{i=1}^\ell p_i(x^-)\, \abs{ \varphi((x^-,i), \hat A(x^-) \hat p) -    \varphi((y^-,i), \hat A(x^-) \hat p) } \\
		&\qquad + \sum_{i=1}^\ell p_i(x^-)\, \abs{ \varphi((y^-,i), \hat A(x^-) \hat p) -    \varphi((y^-,i), \hat A(y^-) \hat p) } \\
		&\qquad +  \sum_{i=1}^\ell \abs{p_i(x^-)-p_i(y^-)} \,  \abs{\varphi((y^-,i),\hat A(y^-) \hat p) }\\
		&\quad \leq v_{k+1}(\varphi)  +  v_\alpha^\Pp(\varphi)\, M \, d(x^-,y^-)^\alpha + v_k(p)\, \norm{\varphi}_\infty .
		\end{align*}
		Thus, taking the sup we get
		$$  v_k(\Qop \varphi) \leq v_{k+1}(\varphi) + M \,v_\alpha^\Pp(\varphi)\, 2^{-k\,\alpha} + \, v_k(p)\, \norm{\varphi}_\infty .$$
		Given $k\in \N$ we now have
		\begin{align*}
		2^{k\,\alpha}\, v_k(\Qop \varphi) &\leq 
		2^{k\,\alpha}\, v_{k+1}(\varphi) + M\, v_\alpha^\Pp(\varphi) + 2^{k\,\alpha}\, v_k(p)\, \norm{\varphi}_\infty\\
		&\leq 
		2^{-\alpha}\, v_{\alpha}(\varphi) + M\, v_\alpha^\Pp(\varphi) +  v_\alpha(p)\, \norm{\varphi}_\infty .
		\end{align*}
		Hence taking the sup
		$$ \valfaSigma(\Qop \varphi)\leq 
		2^{-\alpha}\, v_{\alpha}^\Sigma(\varphi) + M\, v_\alpha^\Pp(\varphi) + v_\alpha(p)\, \norm{\varphi}_\infty . $$
		Finally by induction we get
		\begin{align*}
		& \valfaSigma(\Qop^n \varphi) \leq 
		2^{-n\,\alpha}\, v_{\alpha}^\Sigma(\varphi) \\
		&\qquad + M (2^{-(n-1)\alpha}\, v_\alpha^\Pp(\varphi) + 2^{-(n-2)\alpha}\,v_\alpha^\Pp(\Qop\varphi)  + \cdots  +  2^{-0}\, v_\alpha^\Pp(\Qop^{n-1}\varphi)) \\
		& \qquad + (1+2^{-\alpha}+\cdots +2^{-(n-1)\alpha} )\, v_\alpha(p)\, \norm{\varphi}_\infty\\
		&\quad \leq 
		2^{-n\alpha}\, v_{\alpha}^\Sigma(\varphi) +  \sum_{i=0}^{n-1} M  \, 2^{-(n-1-i)\alpha} C\,\sigma^i\,v_\alpha^\Pp(\varphi) + \frac{v_\alpha(p)}{1-2^{-\alpha}}\, \norm{\varphi}_\infty\\
		&\quad \leq 
		2^{-n\alpha}\, v_{\alpha}^\Sigma(\varphi) +  n C M \beta^{n-1}\,v_\alpha^\Pp(\varphi) + \frac{v_\alpha(p)}{1-2^{-\alpha}}\, \norm{\varphi}_\infty  
		\end{align*}
		where $\beta=\max\{2^{-\alpha}, \sigma\}<1$.
	\end{proof}

	\begin{corollary}
		\label{Lasota-York type ineq}
		There exist $0<\sigma <1$ and $C<\infty$ such that for all $\varphi\in C^0( X^-\times \proj)$ and all sufficiently large $n$,
		$$ v_\alpha(\Qop^n\varphi)\leq \sigma^n \,v_\alpha(\varphi) + C\,\norm{\varphi}_\infty . $$
	\end{corollary}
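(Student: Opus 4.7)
The plan is to simply add the two preceding estimates and absorb the subexponential factors into a slightly worse geometric rate. Recall that by definition $v_\alpha(\varphi) = v_\alpha^\Pp(\varphi) + \valfaSigma(\varphi)$, so it suffices to bound each piece of $v_\alpha(\Qop^n \varphi)$ separately using what has already been established.

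First I would invoke Corollary~\ref{v alpha Proj varphi ineq} to get
$$v_\alpha^\Pp(\Qop^n \varphi) \le C_1 \sigma_1^n \, v_\alpha^\Pp(\varphi)$$
for some $0 < \sigma_1 < 1$ and $C_1 < \infty$, and Proposition~\ref{v alpha Sigma Qop varphi ineq } to get
$$\valfaSigma(\Qop^n \varphi) \le 2^{-n\alpha}\, \valfaSigma(\varphi) + n C M \beta^{n-1}\, v_\alpha^\Pp(\varphi) + \frac{v_\alpha(p)}{1-2^{-\alpha}}\, \norm{\varphi}_\infty,$$
where $\beta = \max\{2^{-\alpha}, \sigma_1\} < 1$. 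Adding these two inequalities yields
$$v_\alpha(\Qop^n \varphi) \le \bigl(C_1 \sigma_1^n + n C M \beta^{n-1}\bigr)\, v_\alpha^\Pp(\varphi) + 2^{-n\alpha}\, \valfaSigma(\varphi) + C'\, \norm{\varphi}_\infty,$$
with $C' := \frac{v_\alpha(p)}{1-2^{-\alpha}}$.

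Now I would pick any $\sigma \in (\max\{\sigma_1, 2^{-\alpha}\}, 1)$. Since $\sigma_1 < \sigma$, $\beta < \sigma$, and $2^{-\alpha} \le \sigma$, for all sufficiently large $n$ we have simultaneously $C_1 \sigma_1^n \le \tfrac{1}{2}\sigma^n$, $n C M \beta^{n-1} \le \tfrac{1}{2}\sigma^n$, and $2^{-n\alpha} \le \sigma^n$, because polynomial growth is dominated by geometric decay at a strictly slower rate. Using $v_\alpha^\Pp(\varphi) + \valfaSigma(\varphi) = v_\alpha(\varphi)$, this gives
$$v_\alpha(\Qop^n \varphi) \le \sigma^n \, v_\alpha(\varphi) + C'\, \norm{\varphi}_\infty,$$
which is the claimed estimate with $C := C'$.

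There is essentially no obstacle here: the whole step is bookkeeping. The only mild subtlety is that the cross term $n C M \beta^{n-1}\, v_\alpha^\Pp(\varphi)$ couples the projective oscillation back into the estimate for $\valfaSigma$, so one has to choose $\sigma$ strictly larger than both $\sigma_1$ and $2^{-\alpha}$ in order to absorb this polynomial-times-geometric factor; a naive attempt to keep the sharp rate $\max\{\sigma_1, 2^{-\alpha}\}$ would fail. This forces the conclusion to hold only for $n$ sufficiently large, which is exactly what the statement asserts.
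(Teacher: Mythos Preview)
Your proof is correct and is exactly the approach the paper intends: the paper's own proof consists of the single sentence ``Combine Corollary~\ref{v alpha Proj varphi ineq} with Proposition~\ref{v alpha Sigma Qop varphi ineq },'' and you have carried out precisely that combination, choosing $\sigma$ strictly above $\beta=\max\{2^{-\alpha},\sigma_1\}$ to absorb the polynomial factor $n$.
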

	
	\begin{proof}
		Combine  Corollary~\ref{v alpha Proj varphi ineq} with Proposition~\ref{v alpha Sigma Qop varphi ineq }.
	\end{proof}

	Consider the probability measure 
	$\m\in \Prob( X^-\times\proj)$ which was proven to be 
	a stationary measure for 
	$\Qop\colon C^0( X^-\times\proj)\to  C^0( X^-\times\proj)$
	in Proposition~\ref{stationary measure}.  
	
	\begin{proposition} 
		\label{fibre string mixing}
		Take any  $0<\alpha\leq 1$ according to Propositions~\ref{p alpha Holder} and~\ref{kappa alpha}. Then the Markov operator $\Qop:C^\alpha(X^-\times\proj)\to C^\alpha(X^-\times\proj)$
		is strongly mixing. In other words  there exists   $0<\sigma_0<1$
		such that for any $\varphi\in C^\alpha(X^-\times\proj)$ and some $C_0=C_0(\norm{\varphi}_\alpha)<\infty$ we have 
		$$ \norm{ \Qop^n \varphi-\smallint \varphi\, dm^- }_\infty \leq C_0\, \sigma_0^n  \qquad \forall \, n\in\N .$$
	\end{proposition}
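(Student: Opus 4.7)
The plan is to verify the three hypotheses of Theorem~\ref{base strong mixing} in the present setting, namely with the compact metric space $\Sigma = X^- \times \proj$, the stochastic dynamical system $K$ defined in~\eqref{def Fiber kernel}, the associated Markov operator $\Qop$, and the candidate measure $\m \in \Prob(X^- \times \proj)$; the conclusion of that abstract theorem will then be exactly the strong mixing statement we want.

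First, hypothesis (1) of Theorem~\ref{base strong mixing}, namely that $\m$ is $K$-stationary, is precisely the content of Proposition~\ref{stationary measure}, which was obtained by a direct disintegration argument using the local product structure of $\mu$ and the $F$-invariance of the Oseledets unstable direction.

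Second, hypothesis (2) of Theorem~\ref{base strong mixing}, namely that for every $p \in \N$ and every $(x^-, \hat v) \in X^- \times \proj$ one has $\supp(\m) \subseteq \Omega_{K^p}(x^-, \hat v)$, is precisely Proposition~\ref{Omegap dense in supp}, whose proof exploited the twisting and pinching condition together with a homoclinic accumulation argument to push arbitrary initial fibers onto the Oseledets direction.

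Third, hypothesis (3) of Theorem~\ref{base strong mixing} requires the existence of $n_0 \in \N$, $0<\sigma<1$ and $C<\infty$ such that $v_\alpha(\Qop^{n_0}\varphi) \leq \sigma^{n_0} v_\alpha(\varphi) + C \norm{\varphi}_\infty$ for every $\varphi \in C^\alpha(X^- \times \proj)$. This is a Lasota–Yorke type inequality, and it is exactly the content of Corollary~\ref{Lasota-York type ineq}, valid once $n_0$ is taken large enough (the factor $\sigma^{n_0}$ there being effectively any fixed rate $<1$, which is precisely what we need). No step here is a serious obstacle: the real work was done in the earlier sections, with the hard parts being the construction of the stationary measure via the local product structure (Proposition~\ref{stationary measure}), the dynamical step from arbitrary fibers to the Oseledets direction via homoclinic accumulation (Proposition~\ref{Omegap dense in supp}), and the quantitative contraction $\kappa_\alpha(A^n,\mu)<1$ coming from Proposition~\ref{kappa alpha} that drives the Lasota–Yorke estimate. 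Once these three ingredients are in place, invoking Theorem~\ref{base strong mixing} immediately gives the desired exponential decay $\norm{\Qop^n \varphi - \int \varphi \, d\m}_\infty \leq C_0 \sigma_0^n$, completing the proof.
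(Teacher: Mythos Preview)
Your proposal is correct and follows exactly the paper's own proof: both verify hypotheses (1), (2), (3) of Theorem~\ref{base strong mixing} by invoking Proposition~\ref{stationary measure}, Proposition~\ref{Omegap dense in supp}, and Corollary~\ref{Lasota-York type ineq} respectively, and then conclude directly from that abstract theorem.
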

	
	\begin{proof}
		The strong mixing property follows from the abstract Theorem~\ref{base strong mixing}. Notice that Proposition~\ref{stationary measure} implies hypothesis (1), Proposition~\ref{Omegap dense in supp} guarantees hypothesis (2) while Corollary~\ref{Lasota-York type ineq} ensures hypothesis (3) of Theorem~\ref{base strong mixing}.
	\end{proof}


	\section{Continuity of the Lyapunov Exponents}
	\label{continuity}

	In this section we establish large deviation estimates of exponential
	type for fiber bunched cocycles  satisfying a pinching and twisting condition. These are then used to prove H\"older continuity of the top Lyapunov exponent on this space of cocycles.

	\bigskip

	Given $0<\alpha\leq 1$, denote by  $ C^\alpha_{\rm FB}(X, \GL)$  the space of fiber bunched $\alpha$-H\"older continuous cocycles  $A:X \to \GL$. Cocycles
	that factor through the projection $P_-\colon X \to X^-$
	are constant along local unstable sets $\Wuloc(x)$.
	We denote by $ C^\alpha_{\rm FB}(X^-, \GL)$  the subspace of all such cocycles. Both these spaces are endowed with the uniform distance $$d(A,B):= \norm{A-B}_\infty + \norm{A^{-1}-B^{-1}}_\infty .$$
	
	\begin{theorem} \label{fiber LDT Sigma-}
		Given  $A\in C^\alpha_{\rm FB}(X, \GL)$ satisfying the pinching and twisting condition (Definition~\ref{pinching and twisting})  and where $\alpha$ is taken according to the conclusions of Propositions \ref{p alpha Holder}  and~\ref{kappa alpha} as well as according to the exponent  $\beta$ in the conclusion of Proposition~\ref{prop.reduction},  there exists   $\mathscr{V}$ neighborhood of $A$ in $C^\alpha_{\rm FB}(X, \GL)$ and there exist
		$C=C(A)<\infty$ and $k=k(A)>0$   
		such that for all\, $0<\varepsilon<1$,  $B\in\mathscr{V}$ and  $n\in\N$,  
		$$  \mu \left\{ x\in X \colon \, \abs{\frac{1}{n}\,\log \norm{B^{n}(x)} - L_1(B) } > \varepsilon \,\right\} \leq C\,e^{ - {k\,\varepsilon^2 }\, n  } \;. $$
	\end{theorem}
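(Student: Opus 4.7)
The plan is to derive the LDT by feeding the strong mixing of the fiber Markov operator $\Qop$ on $C^\alpha(X^-\times\proj)$ established in Proposition~\ref{fibre string mixing} into the abstract large deviations framework for stationary Markov systems from~\cite[Chapter 5]{DK-book}, in the same style as the proof of Theorem~\ref{base ldt}. First, I would use Proposition~\ref{prop.reduction} to replace every cocycle $B$ in a $C^0$-neighborhood of $A$ by a conjugate cocycle factoring through $P^-$; since the conjugating matrix function and its inverse are uniformly bounded there, $\log\norm{B^n(x)}$ differs from the corresponding quantity for the reduced cocycle by a uniformly $O(1)$ error that is irrelevant for an LDT. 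Thus I may assume $B=B^-\circ P^-$ for every $B$ in the neighborhood under consideration.

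Next, I would introduce the observable $\varphi_B\in C^\alpha(X^-\times\proj)$ defined by $\varphi_B(x^-,\hat v):=\log\norm{B^-(x^-)\,v}$ for any unit representative $v$; its $\alpha$-Hölder norm is bounded uniformly on a neighborhood of $A$ thanks to fiber bunching and the Hölder regularity of $B$. Two identities drive the argument. First, by the telescoping property of the Markov kernel~\eqref{def Fiber kernel}, the Birkhoff sum of $\varphi_B$ along a trajectory $(Z_j)_{j\ge 0}$ of the chain starting at $Z_0=(x^-,\hat v_0)$ equals $\log\norm{B^n(x)\,v_0}$, where $x\in X$ is the point whose negative part is $x^-$ and whose positive part is the realization $(i_1,i_2,\ldots)$ of the chain. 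Second, since the stationary measure from Proposition~\ref{stationary measure} disintegrates as $\delta_{e^u_B(x^-)}$ over $\mu^-$, the Oseledets/Furstenberg formula gives
\[ \int \varphi_B\,dm^-_B \;=\; \int_{X^-}\log\norm{B^-(x^-)\,e^u_B(x^-)}\,d\mu^-(x^-) \;=\; L_1(B). \]
The spectral gap of $\Qop_B$ from Proposition~\ref{fibre string mixing} is stable under $C^0$-perturbation of $B$, so~\cite[Theorem 5.4]{DK-book} applied exactly as in the proof of Theorem~\ref{base ldt} produces a neighborhood $\mathscr V$ of $A$ and constants $C,k>0$ such that, for every $B\in\mathscr V$, every unit $v_0\in\R^d$, every $0<\varepsilon<1$ and every $n\in\N$,
\[ \mu\bigl\{x\in X:\;\abs{\tfrac1n\log\norm{B^n(x)\,v_0}-L_1(B)}>\varepsilon\bigr\}\;\leq\;C\,e^{-k\varepsilon^2 n}. \]

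To pass from this fixed-direction LDT to one for the operator norm $\norm{B^n(x)}$, I would use the elementary sandwich $\max_i \norm{B^n(x)\,e_i}\leq \norm{B^n(x)}\leq \sqrt{d}\,\max_i \norm{B^n(x)\,e_i}$ along a fixed orthonormal basis $\{e_1,\ldots,e_d\}$ of $\R^d$: the lower deviation is controlled by applying the previous bound to any single $e_i$, while the upper deviation follows from a union bound over the $d$ basis directions, after absorbing the additive $\tfrac{1}{2n}\log d$ correction into $\varepsilon$ by a slight adjustment of $k$. The main obstacle, as in the base case, is verifying that the abstract hypotheses of~\cite[Section 5.2.1]{DK-book} hold \emph{uniformly} on $\mathscr V$: continuity of $B\mapsto\Qop_B$ between the relevant Hölder spaces, a uniform Lasota-Yorke inequality (Corollary~\ref{Lasota-York type ineq} with constants independent of $B$), and stability under perturbation of both the spectral gap and the stationary measure $m^-_B$. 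These verifications are structurally identical to those carried out for the base operator $\Qop$ in the proof of Theorem~\ref{base ldt}, so no new conceptual difficulties should arise.
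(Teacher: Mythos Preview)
Your proposal is correct and follows essentially the same route as the paper: reduce via Proposition~\ref{prop.reduction} to cocycles factoring through $P^-$, feed the strong mixing of $\Qop_B$ on $C^\alpha(X^-\times\proj)$ (Proposition~\ref{fibre string mixing}) together with the observable $\xi_B(x^-,\hat v)=\log\norm{B(x^-)v}$ into the abstract large-deviations machinery of \cite[Chapter~5]{DK-book}, and transfer back. The paper performs the holonomy reduction last rather than first and defers the passage from the Birkhoff-sum LDT to the operator-norm LDT to \cite[Theorem~5.3]{DK-book}, whereas you spell this step out with the basis sandwich and a union bound over $d$ fixed directions; both are legitimate.

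One small point deserves to be made explicit in your write-up. Your sandwich argument requires the LDT for an \emph{arbitrary} fixed initial direction $v_0$, i.e.\ for the Markov chain started from $\mu^-\times\delta_{\hat v_0}$, which is \emph{not} the stationary measure $\m_B$ (the latter is concentrated on the Oseledets graph $\{(x^-,\hat e^u_B(x^-))\}$). The paper, by contrast, works only with the stationary process $\Pp_B=\pi_\ast m_B$ of Proposition~\ref{stationary Markov process}. Your stronger claim is nonetheless correct, because the Nagaev--Guivarc'h spectral method behind \cite[Theorem~5.4]{DK-book} yields the Laplace-transform asymptotics $(\Qop_{B,t}^n\mathbf 1)(x^-,\hat v_0)\asymp\lambda_B(t)^n$ uniformly over the compact state space $X^-\times\proj$, so the exponential deviation bound holds for every initial state. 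Just say this explicitly rather than invoking the stationary-process formulation verbatim.
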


	\begin{proof}
		Let $\Xspace$ be the subspace of cocycles in  $C^\alpha_{\rm FB}(X^-, \GL)$ 
		satisfying the pinching and twisting condition.
		For each $A\in \Xspace$ consider the SDS 
		$K:X^-\times\proj \to\Prob(X^-\times\proj)$ introduced in~\eqref{def Fiber kernel}. We will write $K=K_A$ to emphasize the dependence of $K$ on $A$. By Proposition~\ref{fibre string mixing}, the stationary measure $\m=\m_A$ in Proposition~\ref{stationary measure} is the unique such measure. 
		Consider  $\xi_A:X^-\times\proj \to \R$ defined by
		$$\xi_A(x^-, \hat p):= \log \norm{A(x^-)\, p} \ \text{ where } p \in \hat p \ \text{ is a unit vector} .$$
		Consider the  Banach algebra 
		$( C^\alpha(X^-\times\proj), \norm{\cdot}_\alpha))$, which belongs to a scale of Banach algebras satisfying assumptions (B1)-(B7) in~\cite[Section 5.2.1]{DK-book}.

		The class $\tilXsp$ of  observed Markov systems $(K_A, \m_A, \pm \xi_A)$ with $A\in \Xspace$  is a metric space, when endowed with the uniform distance between the underlying cocycles, that satisfies assumptions   (A1)-(A4) in~\cite[Section 5.2.1]{DK-book}. By construction assumption (A1) holds.
		Assumption (A2) follows from Proposition~\ref{fibre string mixing}.
		Assumption (A3) holds easily because $\xi_A\in C^\alpha(X^-\times\proj)$. Finally (A4) is easily checked, adapting the proof of~\cite[Lemma 5.10]{DK-book}.

		Given any $A\in \Xspace$,
		by~\cite[Theorem 5.4]{DK-book} (see also the proof of~\cite [Theorem 5.3]{DK-book}) there exists a neighbourhood $\mathscr{V}=B_\delta(A)\subseteq \Xspace$ and there are positive constants
		$C <\infty$, $k$ and $\varepsilon_0$, depending only on $A$  through $\norm{\xi_A}_\alpha$, such that for all\, $0<\varepsilon<\varepsilon_0$,  $B \in\mathscr{V}$ and  $n\in\N$,  
		\begin{equation}
		\label{Markov process fiber ldt}
		\Pp_B \left[ \; \abs{\frac{1}{n}\,\log \norm{B^n }  - L_1(B,\mu)  } > \varepsilon \,\right]  \leq C\,e^{ - {k\,\varepsilon^2 }\, n  } \;, 
		\end{equation}  
		where $\Pp_B\in\Prob( (X^-\times\proj )^\N )$ is any probability measure which makes the process $\{e_n:(X^-\times\proj )^\N\to X^-\times \proj\}_{n\geq 0}$, defined by
		$e_n \{ (x_j^-, \hat v_j) \}_{j\geq 0}$ $:= ( x_n^-, \hat v_n )$, a stationary Markov process with transition stochastic kernel $K_B$ and constant common distribution $\m_B$. 
		
		The constraint   $0<\varepsilon<\varepsilon_0$  can be relaxed to $0<\varepsilon<1$ replacing the constant $k$ by $k'=k\, \varepsilon_0^2$.

		Next consider the map $\pi:X\times\proj \to (X^-\times\proj )^\N$, $\pi( x, \hat v) :=(\{x_n^-\}_{n\geq 0}, \hat v_n)$, where $x=\{x_j\}_{j\in \Z}$,  $x_n^-=\{x_{n+j}\}_{j\leq 0} $ and $\hat v_n = \hat A^n(x)\, \hat v$ for all $n\in\N$. This projection makes  the following diagram commutative
		$$ \begin{CD} X\times\proj    @>\hat F>>  X\times\proj \\@V\pi VV        @VV\pi V\\ (X^-\times\proj )^\N     @>> \hat T >   (X^-\times\proj )^\N\end{CD} $$
		where $\hat F(x,\hat v):=(T x, \hat A(x)\, \hat v)$ and the bottom horizontal map $\hat T$ stands for the left shift map.
		Defining $\mathrm{e}\colon (X^-\times\proj )^\N \to X^-\times\proj$,
		$\mathrm{e} \{(x_n^-, \hat v_n)\}_{n\geq 0}:= (x_0^-,\hat v_0)$, the above process $\{e_n\}_{n\geq 0}$ is  
		$e_n= \mathrm{e}\circ \hat{T}^n$.
		
		Define a probability measure $m_B\in\Prob(X\times\proj)$
		which integrates every bounded measurable functions $\varphi\colon X\times\proj\to\R$ by  
		$$\int \varphi\, d m_B= \int_X \varphi(x, \hat e^u(x))\, d\mu(x) .$$
		
		\begin{proposition}\label{stationary Markov process}	If  $\Pp_B :=\pi_\ast m_B$ ~ then the process $\{e_n\colon (X^-\times\proj )^\N\to X^-\times \proj\}_{n\geq 0}$  is   Markov 
			with transition stochastic kernel $K_B$ and common distribution $\m_B$. 
		\end{proposition}
		
		\begin{proof}
			Since
			$$e_n\circ \pi = \mathrm{e}\circ \hat{T}^n \circ \pi =
			\mathrm{e} \circ \pi \circ \hat F^n =(P_-\times\mathrm{id})\circ \hat F^n , $$
			while $\m_B = (P_-\times\mathrm{id})_\ast m_B$, both processes $\{e_n\}_{n\geq 0}$ and $\{\tilde e_n :=e_n\circ \pi \}_{n\geq 0}$ are stationary with common distribution $\m_B$.
			
			Taking a cylinder
			$[i]:=\{ x^-\in X^-\colon  x^-_0=i\}$, with $1\leq i\leq \ell$, and a Borel set $E\subseteq \proj$ since 
			\begin{align*}
			(K_B)_{(x^-,\hat v)}([i]\times E) &= \sum_{j=1}^\ell p_j(x^-)\, \ind_{[i]\times E} ((x^-,j), \hat B(x^-)\, \hat v) )\\
			&= p_i(x^-) \, \ind_{E}(\hat B(x^-)\, \hat v) \\
			&= \mu_{x^-}( \Wuloc(x^-)\cap \Wuloc(x^-,i) )\, \ind_{E}(\hat B(x^-)\, \hat v) \\
			&= m_B\left( \, \tilde e_n \in [i]\times E \, \vert \, \tilde e_{n-1}=( x^-,\hat v) \, \right)  \\
			&= \Pp_B\left( \,  e_n \in [i]\times B \, \vert \,   e_{n-1}=(x^-,\hat v) \, \right) \\
			\end{align*}
			both processes $\{e_n\}_{n\geq 0}$ and $\{\tilde e_n \}_{n\geq 0}$ are Markov with transition stochastic kernel $K_B$.
		\end{proof}

		Thus, because the projection  $\pi:X\times\proj \to (X^-\times\proj)^\N$ preserves measure,  identifying
		any $B\in \mathscr{V} \subseteq \Xspace$  as a cocycle  $B\equiv B\circ P_-$ on $X$, 	 the large deviation estimate~\eqref{Markov process fiber ldt} holds for all $B\in \mathscr{V}$, $0<\varepsilon<1$ and $n\in\N$,
		\begin{equation}
		\label{Markov process fiber ldt 2}
		\mu  \left\{  x \in X  \colon \; \abs{\frac{1}{n}\, \log\norm{B^n (x)} - L_1(B,\mu)   } > \varepsilon \,\right\}  \leq C\,e^{ - {k\,\varepsilon^2 }\, n  } \;, 
		\end{equation} 
		where the deviation set  above 
		is the pre-image under $\pi$ of the deviation set $\Delta_n^-(B,\varepsilon)\subset (X^-\times\proj)^\N$ in~\eqref{Markov process fiber ldt}. 
		This completes the proof for  cocycles depending only on past coordinates.

		\bigskip

		As explained in Section~\ref{holonomy} (see Proposition~\ref{prop.reduction}) to each cocycle 
		$A\in C^\alpha_{\rm FB}(X, \GL)$ we associate a new coycle
		$A^s\in C^\beta_{\rm FB}(X^-, \GL)$, with $0<\beta<\alpha$,  which is conjugated to $A$ via holonomies. The holonomy reduction procedure of Avila-Viana $C^\alpha_{\rm FB}(X, \GL)\to
		C^\beta_{\rm FB}(X^-, \GL)$, $A\mapsto A^s$, is continuous.
		Since $A$ and $A^s$ are conjugated, if $A$ satisfies the pinching and twisting condition then so does $A^s$. By Theorem~\ref{fiber LDT Sigma-},
		there exists $\mathscr{V}^s$ neighborhood of $A^s$ in  $C^\beta_{\rm FB}(X^-, \GL)$ such that for all $B\in \mathscr{V}^s$,  $0<\varepsilon<1$ and $n\in\N$ 
		$$\mu  \left\{  x \in X  \colon \; \abs{\frac{1}{n}\, \log\norm{B^n (x)} - L_1(B,\mu)   } > \varepsilon \,\right\}  \leq C\,e^{ - {k\,\varepsilon^2 }\, n  }. $$
		By continuity of the reduction map $B\mapsto B^s$, there exists $\mathscr{V}$ neighborhood of $A$ such that $B^s\in\mathscr{V}^s$ for all $B\in \mathscr{V}$. 
		Since $B$ and $B^s$ are conjugated via holonomies, and the holonomies are uniformly bounded,  
		$$ \frac{1}{n}\, \log \norm{B^n(x)}=
		\frac{1}{n}\, \log \norm{(B^s)^n(x)} +\mathscr{O}(\frac{1}{n}) .$$
		Hence    large deviation estimates transfer over from $B^s$ to $B$.
	\end{proof}

	The following is a restatement of 
	Theorem~\ref{theorem.main}. 
	
	\begin{theorem} \label{LE continuity}
		Given  $A\in C^\alpha_{\rm FB}(X, \GL)$ satisfying the pinching and twisting condition (Definition~\ref{pinching and twisting}),  there is a neighbourhood  $\mathscr{V}$  of $A$ in $C^\alpha_{\rm FB}(X, \GL)$ and there exist constants
		$C=C(A)<\infty$ and $\theta=\theta(A)\in (0,1)$   
		such that for all\, $B_1, B_2\in\mathscr{V}$   
		$$ \abs{L_1(B_1,\mu)-L_1(B_2,\mu)}\leq C\, \norm{B_1-B_2}_\infty^\theta . $$
	\end{theorem}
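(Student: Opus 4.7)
The plan is to derive the H\"older continuity of $L_1$ from the uniform large-deviations estimate of Theorem~\ref{fiber LDT Sigma-} by invoking the abstract continuity scheme developed in~\cite[Chapter~2]{DK-book}, which converts uniform finite-scale LDT estimates into H\"older regularity of the top Lyapunov exponent. The engine behind that reduction is the Avalanche Principle (AP) for $\GL$: a long matrix product can be evaluated, up to exponentially small errors, through the norms of its one- and two-block sub-products, provided those sub-products have a sufficiently large singular-value gap. The uniform LDT of Theorem~\ref{fiber LDT Sigma-}, combined with the simplicity of the top exponent inherited from the pinching-and-twisting hypothesis, is precisely what makes the AP hypotheses verifiable with high probability, uniformly over a neighbourhood $\mathscr{V}$ of $A$.

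Concretely, I would fix $B_1, B_2 \in \mathscr{V}$, set $\varepsilon_0 := \norm{B_1 - B_2}_\infty$, and keep both a block length $\ell$ and a deviation threshold $0 < \varepsilon < 1$ as free parameters to be optimised at the end. Applying the uniform LDT at scale $\ell$ to both cocycles allows me to replace each $L_1(B_i)$ by the finite-scale mean $\frac{1}{\ell}\int \log\norm{B_i^\ell}\,d\mu$, at cost $O(\varepsilon) + O(e^{-k\varepsilon^2 \ell})$. These finite-scale means are then compared directly: the trivial telescoping bound $\norm{B_1^\ell - B_2^\ell}_\infty \leq \ell R^{\ell-1}\varepsilon_0$ with $R = \sup_{B \in \mathscr{V}} \norm{B}_\infty$, combined with the lower bound $\norm{B_i^\ell(x)} \geq e^{\ell(L_1(A) - O(\varepsilon))}$ available on the LDT good set, yields a difference bounded by $\ell^{-1}\log(1 + \ell R^\ell e^{-\ell L_1(A)} \varepsilon_0)$. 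Optimising $\ell \asymp \log(1/\varepsilon_0)$ and taking $\varepsilon$ as a small negative power of $\log(1/\varepsilon_0)$ then balances the three contributions and produces the desired bound $|L_1(B_1) - L_1(B_2)| \leq C\,\varepsilon_0^\theta$ for some explicit $\theta \in (0, 1)$ determined by $k$, $L_1(A)$ and $R$.

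The step I expect to be the most delicate is the uniform verification of the AP hypotheses on $\mathscr{V}$: one needs the two-block ratio $\norm{B^\ell(T^\ell x)}\,\norm{B^\ell(x)}/\norm{B^{2\ell}(x)}$ to stay below $e^{\kappa \ell}$ outside a set of exponentially small measure, uniformly in $B \in \mathscr{V}$, at a scale $\ell$ depending only on $A$. This is a quantitative incarnation of the simplicity of the leading Lyapunov exponent; it ultimately rests on the fibre strong mixing of the Markov operator $\Qop$ (Proposition~\ref{fibre string mixing}) together with the pinching-and-twisting assumption, both of which are open conditions on $A$. Once this uniform non-degeneracy is in hand, no ingredient beyond the abstract theorem of~\cite[Chapter~2]{DK-book} is required, and the H\"older continuity of $L_1$ follows by the scheme sketched above.
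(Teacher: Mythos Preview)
Your high-level strategy---feed the uniform LDT of Theorem~\ref{fiber LDT Sigma-} into the abstract continuity machinery of~\cite{DK-book}---is exactly what the paper does (it cites the ACT, \cite[Theorem~3.1]{DK-book}, together with Theorems~\ref{base ldt} and~\ref{fiber LDT Sigma-}). So at that level your proposal is correct and aligned with the paper.

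However, the ``concrete'' one-step argument you sketch in the second paragraph does \emph{not} produce H\"older continuity, and the optimisation claim there is wrong. With $\ell \asymp \log(1/\varepsilon_0)$ and $\varepsilon = (\log(1/\varepsilon_0))^{-a}$, the term $O(\varepsilon)$ in your error budget is only $(\log(1/\varepsilon_0))^{-a}$, which is a logarithmic modulus, not a power $\varepsilon_0^\theta$. Trying instead $\varepsilon = \varepsilon_0^\beta$ kills the LDT term, since then $e^{-k\varepsilon^2\ell} = \varepsilon_0^{\,kc\,\varepsilon_0^{2\beta}}$ and the exponent tends to $0$. No balancing of these three contributions at a single scale $\ell$ can ever yield $\varepsilon_0^\theta$; the best you get from this direct comparison is a weak/log-H\"older bound. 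This is precisely why the Avalanche Principle is not a side remark but the core of the argument: it lets you run an \emph{inductive} scheme in which the block length jumps from $n$ to $n_+ \asymp e^{c\,n}$ at each step, and it is this super-exponential growth of scales that converts the exponential LDT decay into genuine H\"older regularity. Your third paragraph correctly identifies the AP gap-verification as the delicate input, but you never actually deploy the AP in the scheme you wrote down.

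One smaller omission: the paper feeds \emph{both} the base LDT (Theorem~\ref{base ldt}) and the fiber LDT (Theorem~\ref{fiber LDT Sigma-}) into the ACT. The base LDT is used inside the inductive step of~\cite[Chapter~3]{DK-book} to control Birkhoff averages of scalar observables arising from the AP expansion; you should cite it as well.
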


	\begin{proof}
		By Theorems~\ref{base ldt} and~\ref{fiber LDT Sigma-} we can apply the ACT~\cite[Theorem 3.1]{DK-book}. The H\"older modulus of continuity
		follows from the exponential type of the previous large deviation estimates.
	\end{proof}

	\begin{remark}\label{LDP} In the setting of Theorem~\ref{fiber LDT Sigma-}, we can also establish a large deviations principle, that is, a more precise but asymptotic version of the finitary LDT estimate obtained in this theorem. 
		
		The argument uses the local G\"artner-Ellis Theorem (see~\cite[Chapter V Lemma 6.2]{BL}) applied to the sequence of random variables $X_n := \log \norm{A^n (x^-) p} - n L_1 (A)$, and it is an adaptation of results in~\cite[Section 5.2]{DK-book}. More specifically, the condition in the local G\"artner-Ellis Theorem is a consequence of~\cite[Proposition 5.13]{DK-book}, which is applicable in our setting, as already discussed in the proof of Theorem~\ref{fiber LDT Sigma-}. We leave the details of the proof to the interested reader and only state the result.
		
		For any $t$ close enough to $0$, consider the Markov-Laplace operator $\Qop_{A, t} := \Qop_A \circ D_{e^t \xi_A}$ on the  Banach algebra 
		$( C^\alpha(X^-\times\proj), \norm{\cdot}_\alpha))$, where $D_{e^t \xi_A}$ is the multiplication operator by $e^t \xi_A$.
		Denote by $\lambda_A (t)$ the maximal eigenvalue of the operator $\Qop_{A, t}$ and let $c_A^\ast (t)$ be the Legendre transform of the function $c_A (t) := \log \lambda_A (t)$.  
		
		Then for $\varepsilon$ small enough and for all unit vectors $p$ we have
		$$\lim_{n\to\infty} \, \frac{1}{n} \log \, \mu \left\{ x\in X \colon \, \abs{\frac{1}{n}\,\log \norm{A^{n}(x) p} - L_1(A) } > \varepsilon \,\right\}  = - c_A^\ast (\varepsilon) < 0 \, .$$
	\end{remark}


	\section{Central Limit Theorem}
	\label{clt section}
	
	In this section we prove the following Central Limit Theorem (CLT).

	\begin{theorem}\label{clt}
		Given  $A\in C^\alpha_{\rm FB}(X, \GL)$ satisfying the pinching and twisting condition (Definition~\ref{pinching and twisting}),  there exists $0<\sigma<\infty$ such that for every $v\in \R^d\setminus\{0\}$ and  $a\in\R$,
		$$
		\lim_{n\to +\infty}  \mu\left\{ x\in X \colon  \frac{\log\norm{A^n(x) \, v }-n\, L_1(A,\mu)}{\sqrt{n}} \leq a \,  \right\} =  \frac{1}{\sigma\sqrt{2\pi}}\int_{-\infty}^a e^{-\frac{t^2}{2\sigma^2}}dt
		$$
	\end{theorem}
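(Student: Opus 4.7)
The plan is to apply the Gordin--Lif\v{s}ic martingale approximation method~\cite{Go69}, which reduces the CLT for a Birkhoff sum along a stationary Markov chain to a martingale CLT, provided the Poisson equation for the transition operator is solvable. First, using the holonomy reduction of Proposition~\ref{prop.reduction}, I replace $A$ by the conjugated cocycle $A^s$ factoring through $P_-$, at the cost of an additive $O(1)$ error in $\log\norm{A^n(x)\,v}$ which is negligible after dividing by $\sqrt n$. Under this reduction the telescoping identity
\begin{equation*}
\log \norm{A^n(x)\, v} - \log\norm{v} \;=\; \sum_{k=0}^{n-1} \xi_A(e_k(x,\hat v)),
\end{equation*}
with $\xi_A(x^-,\hat p) := \log\norm{A(x^-)\, p}$ for a unit representative $p\in\hat p$ and $\{e_k\}$ the Markov process on $X^-\times\proj$ with transition kernel $K$ of~\eqref{def Fiber kernel}, identifies the normalized log-norm with the normalized Birkhoff sum of the centered observable $\xi_A - L_1(A,\mu)$ along this Markov chain.

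Next I would use the strong mixing of $\Qop$ on $C^\alpha(X^-\times\proj)$ given by Proposition~\ref{fibre string mixing}: quasi-compactness with simple peripheral spectrum $\{1\}$ and a spectral gap ensures that the series
\begin{equation*}
g \;:=\; \sum_{n\geq 0} \Qop^n (\xi_A - L_1(A,\mu))
\end{equation*}
converges in $C^\alpha(X^-\times\proj)$ and solves the Poisson equation $(I-\Qop)g = \xi_A - L_1(A,\mu)$. Letting $\sigma$ denote the left shift on the Markov path space, this produces the decomposition
\begin{equation*}
\xi_A - L_1(A,\mu) \;=\; (g - g\circ\sigma) \,+\, (g\circ\sigma - \Qop g),
\end{equation*}
exhibiting the centered observable as a coboundary plus a stationary ergodic martingale difference $M := g\circ\sigma - \Qop g$ relative to the natural filtration of $\{e_k\}$. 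The partial sums of $\xi_A - L_1(A,\mu)$ thus differ from the martingale $M_n := \sum_{k=0}^{n-1} M\circ\sigma^k$ by $g - g\circ\sigma^n$, which is $O(1)$ in sup-norm and hence $o(\sqrt n)$ in probability. The classical martingale CLT then yields
\begin{equation*}
\frac{1}{\sqrt n}\, M_n \;\longrightarrow\; \mathcal{N}(0,\sigma^2), \qquad \sigma^2 \,:=\, \int g^2 - (\Qop g)^2\, d\m,
\end{equation*}
in distribution. The statement for an arbitrary initial unit vector $v$, with base point $x$ distributed according to $\mu$, follows because the distribution of $e_n$ started from $(P_-(x), \hat v)$ converges exponentially to $\m$ by the mixing estimate of Proposition~\ref{fibre string mixing}, so the CLT is insensitive to the non-stationary initial condition.

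The main obstacle is proving positivity of the variance, $\sigma^2 > 0$. Vanishing $\sigma^2 = 0$ would force $\xi_A - L_1(A,\mu)$ to be an $L^2(\m)$-coboundary along the Markov chain, so that $\log\norm{A^n(x)\,e^u(x)} - n\,L_1(A,\mu)$ remains $L^2$-bounded. A Liv\v{s}ic-type argument applied to the projection of this coboundary relation onto the base then implies that for every periodic orbit $a = T^q a$ one has $\log\abs{\lambda_1(A^q(a))} = q\, L_1(A,\mu)$, where $\lambda_1$ denotes the top eigenvalue granted by the pinching condition of Definition~\ref{pinching and twisting}. The twisting condition, via Proposition~\ref{other homoclinics} applied to homoclinic loops of different combinatorial lengths, provides two distinct periodic orbits for which the periodic top exponents cannot simultaneously equal $L_1(A,\mu)$, yielding a contradiction. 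Carrying this out rigorously in the Markov setting, as an adaptation of the classical Le Page positivity argument for i.i.d. products, is the delicate point of the proof.
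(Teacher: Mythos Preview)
Your approach is essentially the paper's: both reduce to the $X^-$-cocycle via Proposition~\ref{prop.reduction}, both solve the Poisson equation using the spectral gap of Proposition~\ref{fibre string mixing}, and both feed the resulting decomposition into the Gordin--Lif\v{s}ic theorem~\cite{Go69}, arriving at the same variance formula $\sigma^2=\norm{\varphi}_2^2-\norm{\Qop\varphi}_2^2$ where $\varphi=\sum_{n\ge 0}\Qop^n(\xi_A-L_1)$.

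Two remarks. First, on the passage from the stationary initial law $\m$ to an arbitrary fixed $\hat v$: you argue via exponential equidistribution of the chain, which is sound. The paper in fact runs the Markov process only under the stationary measure $m$ on $X\times\proj$ concentrated on $\{(x,\hat e^u(x))\}$ and does not spell this step out, so your treatment is, if anything, more careful here.

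Second, your outline for $\sigma^2>0$ has the right skeleton (vanishing variance forces $\log\norm{A^n(x)\,p}-nL_1$ to be uniformly bounded on $\supp(m)$, hence every periodic orbit there has top exponent exactly $L_1$), but the mechanism you cite for the contradiction is not the one that works. Proposition~\ref{other homoclinics} only asserts that pinching and twisting persist when the homoclinic points are shifted along the orbit; it does not by itself manufacture periodic orbits with distinct top exponents. The paper instead applies shadowing to the closed pseudo-orbit
\[
T^{-k}z,\ldots,T^{-1}z,\,z,\,Tz,\ldots,T^l z=z',\,T^{l+1}z,\ldots,T^{l+n}z
\]
to obtain a genuine periodic point $b_{k,n}\in\supp(\mu)$ whose transfer matrix is close to $A(a)^n\,\psi_{a,z,z'}\,A(a)^k$. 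The twisting condition forces $\psi_{a,z,z'}(e_1)\notin \R e_1$, so this product sends $e_1$ to a vector of length strictly less than $e^{(k+l+n)L_1}$; hence the top Lyapunov exponent of $b_{k,n}$ is strictly below $L_1$, giving the contradiction. Your Liv\v{s}ic-flavoured formulation can be completed, but you still need this shadowing construction (or an equivalent one) to actually exhibit the offending periodic orbit; invoking Proposition~\ref{other homoclinics} alone does not do it.
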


	\begin{proof} If two cocycles $A$ and $B$ are conjugated via some bounded measurable matrix valued function then CLT can be transferred over from one to the other.
		
		Hence, since any cocycle $A\in C^\alpha_{\rm FB}(X, \GL)$ is conjugated to a cocycle $B\in C^\beta_{\rm FB}(X^-, \GL)$, for some
		$0<\beta<\alpha$, we can assume that $A$ does not depend on future coordinates, i.e.,  $A\in C^\alpha_{\rm FB}(X^-, \GL)$.
		
		We will use the following abstract CLT for stationary Markov processes of Gordin and Lif\v{s}ic~\cite{Go69},
		where $L^2(\Gamma,\mu)$ stands for the usual Hilbert space of square integrable observables with   norm
		$$ \norm{\varphi}_2:= \left(\int_\Gamma \abs{\varphi}^2\, d\mu  \right)^{1/2} <\infty .$$
		
		\begin{theorem}\label{gordin lifsic}
			Let $\{X_n\}_{n\geq 0}$ be a stationary Markov process with compact metric state space $\Gamma$, transition probability kernel $K$ and stationary measure $\mu\in \Prob(\Gamma)$. Let $\Qop:L^2(\Gamma,\mu)\to L^2(\Gamma, \mu)$ be the associated Markov operator defined by
			$$ (\Qop \varphi)(x):=\int_\Gamma \varphi(y)\, dK_x(y) \,.  $$
			Given $\psi\in L^2(\Gamma,\mu)$, if\; 
			$ \sum_{n=0}^\infty \norm{\Qop^n\psi}_2 <\infty   $ \;
			then  
			\begin{enumerate}
				\item[(i)] $\psi= \varphi-\Qop \varphi$ where $\varphi:=\sum_{n=0}^\infty \Qop^n\psi \in L^2(\Gamma,\mu)$ and $\int \varphi\, d\mu=0$;
				\item[(ii)] $\left(\mathrm{var}(\psi(X_1)+\cdots + \psi(X_n))\right)^{1/2}= \sigma\,\sqrt{n} + \Oscr(1)$ as $n\to \infty$, with $\sigma^2=\norm{\varphi}_2^2-\norm{\Qop \varphi}_2^2\geq 0$;	
				\item[(iii)] If $\sigma > 0$ then $n^{-1/2}\,(\psi(X_1)+\cdots + \psi(X_n))$ converges in distribution to $N(0,\sigma^2)$ as $n\to \infty$.
			\end{enumerate}
		\end{theorem}

		To apply this theorem, we consider the state space $\Gamma:=X^-\times \proj$ with the
		SDS $K=K_A$ defined in \eqref{def Fiber kernel} from a given fiber bunched H\"older continuous cocycle 
		$A:X^-\to \GL(d,\R)$. Let $\m\in \Prob(X^-\times\proj)$ be the $K$-stationary measure in Proposition~\ref{stationary measure}.
		Denote by $\Qop$ the corresponding Markov operator acting on
		$L^2(\Gamma, \m)$.

		The space $\Omega:=X\times\proj$ becomes a probability space when endowed with the probability measure $m\in\Prob(\Omega)$ characterized by
		$$ \int_\Omega \varphi\, dm = \int_X \varphi(x, \eu(x))\, d\mu(x) $$
		for bounded and measurable observables $\varphi:\Omega\to\R$.
		Consider the process $X_n:\Omega\to\Gamma$ defined by
		$$ X_n(x,\hat p):= \left( P_-(T^n x), \hat A^n(x)\,\hat p \right) . $$
		This is a stationary Markov process with transition probability kernel $K$ and common stationary distribution $\m$. See Proposition~\ref{stationary Markov process}.

		Finally consider the H\"older continuous observable $\psi:\Gamma\to\R$,
		$$\psi(x^-, \hat p):= \log \norm{A(x^-)\, p}-L_1(A\, \mu) , $$ 
		for which
		$$  \sum_{j=0}^{n-1} \psi(X_j(x,\hat p))= \log\norm{A^n(x) \, p }-n\, L_1(A,\mu) .$$
		
		By Proposition~\ref{fibre string mixing} the series
		$\sum_{n=0}^\infty \Qop^j \psi$ is absolutely convergent in the Banach
		algebra $(C^\alpha(\Gamma), \norm{\cdot}_\alpha)$ to a H\"older continuous function $\varphi:= \sum_{n=0}^\infty \Qop^j \psi$.
		Since $\norm{\cdot}_2\leq \norm{\cdot}_\infty\leq \norm{\cdot}_\alpha$, this implies 
		$\sum_{n=0}^\infty \norm{\Qop^n\psi}_2<\infty$,
		thus verifying the the assumption  of Theorem~\ref{gordin lifsic}.

		By item (iii) of this theorem 
		$$ \frac{\log \norm{A^n(x)\, p}-n\, L_1(A,\mu)}{\sqrt{n}} 
		= \frac{\sum_{j=0}^{n-1} \psi(X_j(x,\hat p)) }{\sqrt{n}}   $$
		converges in distribution to $N(0,\sigma^2)$ where
		$\sigma^2= \norm{\varphi}_2^2-\norm{\Qop\varphi}_2^2\geq 0$.

		We are left to prove that $\sigma^2>0$.
		Assume, by contradiction that
		$\sigma^2= \norm{\varphi}_2^2-\norm{\Qop\varphi}_2^2= 0$.
		Then
		\begin{align*}
		0 &\leq  \int  \left( (\Qop\varphi)(x) - \varphi(y) \right)^2\, dK_x(y)\, d\mu(x)\\ 
		&=  \int  \left\{  ((\Qop \varphi)(x))^2+ \varphi(y)^2
		-2\, \varphi(y)\, (\Qop \varphi)(x)\right\} \, dK_x(y) \, d\mu(x)\\
		&= \int  \left\{  \varphi(y)^2
		- ((\Qop\varphi)(x))^2 \right\} \, dK_x(y)\, d\mu(x)\\
		&= \int   \varphi(y)^2\, dK_x(y)\, d\mu(x) - \int 
		((\Qop\varphi)(x))^2   \,  d\mu(x)\\
		&= \norm{\varphi}_2^2- \norm{\Qop\varphi}_2^2= 0
		\end{align*}
		which implies that $\varphi(X_{j+1}(x,\hat p))=(\Qop\varphi)(X_j(x,\hat p))$ for all $j\geq 0$ and
		$m$-almost every $(x,\hat p)\in\Omega$.
		
		Hence for $m$-almost every $(x,\hat p)\in\Omega$,
		\begin{align*}
		\log \norm{A^n(x)\, p}-n\, L_1(A,\mu) &= \sum_{j=0}^{n-1}
		\psi(X_j(x,\hat p)) \\
		&= \sum_{j=0}^{n-1}
		\varphi(X_j(x,\hat p)) - (\Qop \varphi)(X_j(x,\hat p))\\
		&= \sum_{j=0}^{n-1}
		\varphi(X_j(x,\hat p)) -  \varphi(X_{j+1}(x,\hat p))\\
		&=  
		\varphi(X_0(x,\hat p)) -  \varphi(X_{n}(x,\hat p)) .
		\end{align*}
		Because $\varphi\colon\Gamma\to\R$ is continuous on a compact metric space $\Gamma$, the right hand side is uniformly bounded by some constant $M<\infty$, independent of $n$.
		Because the left hand side is also a continuous function on the compact space $\Omega$, this function is  uniformly bounded (in absolute value)  by the constant $M$ over $\supp(m)$.
		It follows that all periodic orbits inside  $\supp(m)$ must have Lyapunov exponents exactly equal to $L_1(A)$.

		We get a contradiction because the pinching and twisting condition, see 
		Definition~\ref{pinching and twisting}, implies the existence of periodic orbits in $\supp(m)$ with varying  Lyapunov exponents.
		To see this consider the fixed point $a=T a$ as well as the homoclinic points $z$ and $z'=T^l z$ in $W^s(a)\cap W^u(a)$ whose existence is prescribed  in Definition~\ref{pinching and twisting}. These orbits stay inside $\supp(m)$. For simplicity we have assumed $a$ is fixed instead of periodic.
		Let $\lambda=L_1(A)$ so that the matrix $A(a)$ has largest eigenvalue 
		$e^\lambda$ (in absolute value), and let $e_1$ be the associated unit  eigenvector.
		The transition map $\psi_{a,z,z'}$ has a limit when
		$l\to+\infty$ with $z,z'$ converging to $a$. Hence $\norm{\psi_{a,z,z'}}<e^{ l\, \lambda}$ for any large enough $l\in\N$.
		Next consider the closed pseudo-orbit of length $k+l+n$,
		$$  \underbrace{T^{-k} z, \ldots, T^{-1} z}_{k\;\text{times} }\, z, Tz, \ldots, z'=T^l z,
		\underbrace{T^{l+1}z,\ldots, T^{l+n} z}_{n\;\text{times} }$$
		which is shadowed by the orbit of a true periodic point $b_{k,n}\in\supp(m)$ near $T^{-k} z$. The definition and properties
		of holonomy imply the following matrix proximity relations
		$$ A^{k+l+n}(b_{k,n})\approx A^{k+l+n}(T^{-k} z) \approx A(a)^n\, \psi_{a,z,z'}\, A(a)^k $$
		with geometrically small errors in $n$ and $k$.
		The vector $e_1$ is a good approximation of the most expanding direction of the third of these matrices, which takes $e_1$ to a vector 
		of length $<e^{(k+l+n)\lambda}$   aligned with $e_1$ up to an angle of order $e^{-2n\lambda}$. The reasons for the loss in expansion are that $\norm{\psi_{a,z,z'}}<e^{ l\, \lambda}$ and $\psi_{a,z,z'} (e_1)\neq e_1$.
		This shows that the periodic point $b_{k,n}$
		has Lyapunov exponent $<\lambda$, therefore concluding the proof.
	\end{proof}


	\section{An application to Mathematical Physics}
	\label{example}

	In~\cite{CS95}  V. Chulaevski and T. Spencer, used a formalism introduced by Pastur and Figotin for the random case (Anderson-Bernoulli model)  to give an explicit positive lower bound on the Lyapunov exponent for Schr\"odinger cocycles with a smooth non-constant potential  and small coupling constant over an Anosov linear toral automorphism. Later J. Bourgain and W. Schlag established in~\cite{BS00}  Anderson localization for Schr\"odinger operators in this context.

	In this section we establish  (without providing any lower bound) the positivity and the H\"older continuity of the Lyapunov exponent for Schr\"odinger operators over uniformly hyperbolic  diffeomorphisms. The result holds for general H\"older continuous potential functions and small coupling constants.

	Let $f:M\to M$ be a diffeomorphism of  class $C^1$, $\Lambda\subseteq M$ be a topologically mixing 
	$f$-invariant basic set and $\mu\in\Prob(\Lambda)$ be the unique equilibrium state of some H\"older
	continuous potential on $\Lambda$. Given another 
	H\"older  continuous potential $v:\Lambda\to\R$,  consider the family of Schr\"odinger cocycles
	$S_{E,\lambda}:\Lambda\to\SL$
	$$  S_{E,\lambda}(x):=\begin{bmatrix}
	E- \lambda\, v(x) & - 1 \\ 1 & 0
	\end{bmatrix} .$$

	\begin{theorem}
		\label{prop:LE(example)>0}
		In this setting, if $\int v\, d\mu \neq 0$, given $\delta>0$ 
		there exists $\lambda_0>0$ such that for all $0<\abs{\lambda}<\lambda_0$ and $E\in\R$ with $\abs{E}<2-\delta$,  the cocycle
		$(f, S_{E,\lambda})$ satisfies the pinching and twisting properties. 
		
		Consequently, its Lyapunov exponent is positive and is a H\"older continuous function of $(E, \lambda)$. Moreover, the CLT and the large deviations principle hold for the iterates of this cocycle.
	\end{theorem}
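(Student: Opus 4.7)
The plan is to verify the hypotheses of Theorem~\ref{theorem.main.allexp}---namely fiber bunching together with the pinching/twisting ($1$-typicality) condition---for the Schr\"odinger cocycle $S_{E,\lambda}$, uniformly for $|\lambda|<\lambda_0$ and $|E|\le 2-\delta$. Once these are established, the remaining conclusions are automatic: H\"older continuity of $L_1(E,\lambda)$ follows from Theorem~\ref{LE continuity} composed with the H\"older dependence $(E,\lambda)\mapsto S_{E,\lambda}$; positivity of $L_1$ follows because typicality forces simplicity of the Lyapunov spectrum, which in the $SL(2,\R)$ case means $L_1>0>L_2$; the CLT is Theorem~\ref{clt}; the LDP is Remark~\ref{LDP}.

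Fiber bunching is immediate. For $|E|<2$ the constant matrix $S_{E,0}$ is elliptic, conjugate by some $C\in GL(2,\R)$ to a rotation $R_\theta$ with $E=2\cos\theta$, so $\norm{S_{E,0}^n}\,\norm{S_{E,0}^{-n}}\le \norm{C}^2\norm{C^{-1}}^2$ uniformly in $n$. Hence for $N$ large the inequality $\norm{S_{E,0}^N}\,\norm{S_{E,0}^{-N}}\,2^{-N\alpha}<1$ holds, and by continuity the same $N$ works for all $S_{E,\lambda}$ with $|\lambda|$ small and $|E|\le 2-\delta$.

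The main step is to verify pinching and twisting. After conjugation, write $\tilde S_{E,\lambda}(x)=R_\theta-\lambda v(x)P$, with $P$ the conjugate of $\mathrm{diag}(1,0)$; the cocycle is a small perturbation of a rigid rotation. An averaging argument in the spirit of Chulaevski--Spencer~\cite{CS95} gives for a periodic point $a=f^q a$ the trace expansion
$$\mathrm{tr}\,\tilde S_{E,\lambda}^q(a) \;=\; 2\cos(q\theta) \,+\, c_\theta\,\lambda \sum_{j=0}^{q-1} v(f^j a) \,+\, O\bigl((\lambda q)^2\bigr),$$
with $c_\theta\ne 0$ when $q\theta$ is near the resonance $\pi\pmod{2\pi}$. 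Since $f|_\Lambda$ is topologically mixing, the specification property produces periodic orbits $a_q$ whose Birkhoff averages $\tfrac{1}{q}\sum_j v(f^j a_q)$ are as close as desired to $\int v\,d\mu\ne 0$; simultaneously, Dirichlet approximation of $\theta$ produces arbitrarily large $q$ with $|q\theta-\pi\!\!\pmod{2\pi}|\le 1/q$. Choosing $\lambda^{-1/3}\lesssim q\lesssim \lambda^{-1}$ and the appropriate sign of $\lambda$, both the residual error $O((q\theta-\pi)^2)$ and the quadratic remainder $O((\lambda q)^2)$ are dominated by the linear correction $c_\theta\lambda q\bar v$, forcing $|\mathrm{tr}\,\tilde S_{E,\lambda}^q(a_q)|>2$; hence $\tilde S_{E,\lambda}^q(a_q)$ is hyperbolic with two distinct real eigenvalues of different absolute value, which is precisely pinching. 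Twisting is verified by choosing a homoclinic point $z\in W^u_{\mathrm{loc}}(a_q)\cap f^{-l}(W^s_{\mathrm{loc}}(a_q))$: in dimension $d=2$ twisting is an open and generic condition on $\psi_{a_q,z,z'}=H^s_{z',a_q}\,A^l(z)\,H^u_{a_q,z}$, namely the non-vanishing of its off-diagonal entries in the eigenbasis of $\tilde S^q(a_q)$; density of the homoclinic class of $a_q$ in $\supp(\mu)$, from topological mixing, allows us to move $z$ along the homoclinic class until this open condition is met, and the uniform control of holonomies on a $C^0$ neighborhood of $S_{E,\lambda}$ makes the whole typicality property persistent there.

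The main obstacle is the pinching step: first-order perturbation alone is delicate because the quadratic error $O((\lambda q)^2)$ competes with both the drift term $\lambda q\bar v$ and the residual $O((q\theta-\pi)^2)$ from the resonance mismatch. Balancing $q$ against $\lambda$ via Dirichlet approximation, together with equidistribution of periodic orbits for $\bar v$, requires some care, and the hypothesis $\int v\,d\mu\ne 0$ is crucial: without drift the linear correction vanishes in the mean over any periodic orbit and no hyperbolicity can be produced at first order.
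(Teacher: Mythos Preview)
Your fiber-bunching verification is fine, and the Pastur--Figotin trace expansion is indeed the right starting point. However, the proposal has a genuine gap in the twisting step and a secondary problem in the pinching scheme.

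\textbf{Twisting.} Your argument---``move $z$ along the dense homoclinic class until the open condition is met''---does not work. Genericity of twisting in the space of \emph{cocycles} says nothing about whether, for the \emph{fixed} cocycle $S_{E,\lambda}$, some homoclinic transition map satisfies it; for small $\lambda$ every transition $\psi_{a_q,z,z'}$ is a small perturbation of a rotation $R_{l\theta}$, and nothing in your outline forces the off-diagonal entries (in the eigenbasis of $S^q_{E,\lambda}(a_q)$) to be nonzero for any $z$. The paper's mechanism is entirely different and is the real content of this section: besides the hyperbolic periodic point $p$ one exhibits an \emph{elliptic} periodic point $p'$ (easy, since the cocycle is a perturbation of $R_\kappa$) whose eigenvalues are not roots of unity of orders $1,2,3,4,6,8$. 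A heteroclinic cycle $p\leadsto p'\leadsto p$ together with a monodromy construction (Lemma~\ref{monodromy}) produces a one-parameter family $\Phi_m=\Psi_1\,A^{k'}(p')^m\,\Psi_0$ of limits of genuine homoclinic transition maps. Since the projective action $\hat A^{k'}(p')$ on $\mathbb{P}^1$ then has period $\ge 5$, a short pigeonhole argument finds $m\in\{1,\dots,5\}$ with $\hat\Phi_m\{\hat e_1,\hat e_2\}$ disjoint from $\{\hat e_1,\hat e_2\}$; this is Lemma~\ref{elliptic, hyperbolic criterion}, and it is precisely the idea your proposal is missing.

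\textbf{Pinching.} Your resonance scheme at $q\theta\approx\pi$ is also problematic as stated: the coefficient of the linear correction in the trace formula is $\sin(q\theta)/\sin\theta$, not a $q$-independent ``$c_\theta$'', and it vanishes at the resonance. Writing $\epsilon=q\theta-\pi$ and $\eta=\lambda q$, one has $\mathrm{tr}=-2+\tfrac12\epsilon^2+O(\epsilon\eta\,\bar v)+O(\eta^2)$, so forcing $|\mathrm{tr}|>2$ requires $|\epsilon\eta\,\bar v|$ to dominate $\epsilon^2+\eta^2$; this forces $\epsilon\sim\eta$ and a quantitative lower bound on $|\int v\,d\mu|$ against the second-order constants (essentially $\|v\|_\infty$) that is neither assumed nor established. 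The paper does not aim at a resonance: it uses the trace formula at a periodic orbit whose Birkhoff average approximates $\int v\,d\mu$ and lets the $\sin(n\kappa)$ factor stay bounded away from zero. Also note that you cannot ``choose the appropriate sign of $\lambda$'': the statement is for all $0<|\lambda|<\lambda_0$.
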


	\begin{remark}
		Comparing with~\cite{BS00} where it is assumed that
		$\int v\, d\mu=0$ with $\delta <\abs{E}<2-\delta$, we do not require that $E\neq 0$ but assume instead that $\int v\, d\mu\neq 0$.
	\end{remark}
	
	\begin{proof}
		Assume that  $f:\Lambda\to \Lambda$ has a fixed point $f(p)=p$.
		Because $\abs{E}<2-\delta$, if $\lambda_0$ is small we have $\abs{\mathrm{tr}(S_{E,\lambda}(p))}=\abs{E-\lambda\, v(x)} <2$.
		Whence $S_{E,\lambda}(p)$ is an elliptic matrix with eigenvalues
		$\neq \pm 1$. If $f$ has no fixed points replace
		$f$ by some power $f^m$ with $m>1$. 
		Since $f$ is  topologically mixing, it admits periodic points $p=f^m(p)$ of any arbitrary given large period  $m\gg 1$. Making $\lambda_0$ small enough (depending on $m$), the matrix
		$S^m_{E,\lambda}(p)\approx \begin{bmatrix}
		E & -1 \\ 1 & 0
		\end{bmatrix}^m$ is elliptic.
		Choosing an appropriate $m$, $S_{E,\lambda}^m(p)$  has eigenvalues
		$\neq \pm 1$.
		From now on we assume $p=f(p)$.

		As in~\cite{BS00,CS95} we use Pastur Figotin formalism,
		writing $E=2\,\cos \kappa$ with $\kappa\in ]0,\pi[$, $v_n(x):= v(f^n x)$,
		$V_n(x):=-v_n(x)/\sin \kappa$ and  
		\begin{align*}
		& R_\theta :=\begin{bmatrix}
		\cos\theta  & -\sin \theta \\
		\sin\theta  & \cos \theta 
		\end{bmatrix} \\
		& N_\theta :=\begin{bmatrix}
		\sin \theta  & \cos\theta \\
		0  &  0
		\end{bmatrix} \\
		& M:=\begin{bmatrix}
		1 & -\cos \kappa \\
		0 & \sin \kappa 
		\end{bmatrix} \\
		& M^{-1}:=\frac{1}{\sin \kappa}\, \begin{bmatrix}
		\sin \kappa  &  \cos \kappa \\
		0 &  1
		\end{bmatrix} =
		\begin{bmatrix}
		1  &  \cot \kappa \\
		0 &  1
		\end{bmatrix}  .
		\end{align*} 
		Notice that 
		$$ R_\kappa = 
		M\, \begin{bmatrix}
		E &  -1 \\ 1 & 0
		\end{bmatrix}\, M^{-1}  . $$
		Then the conjugate cocycle
		$\tilde S_{E,\lambda}:=M\, S_{E,\lambda}\, M^{-1}$ is given by 
		$$ \tilde S_{E,\lambda}(x) := R_\kappa -\frac{\lambda\, v(x)}{\sin \kappa}  \, N_\kappa . $$

		\begin{lemma}
			\label{trace formula}
			For any fixed $n\in\N$, as $\lambda\to 0$,
			\begin{align}
			\nonumber
			\tilde S_{E,\lambda}^{n} & = R_{n\kappa} + \lambda\,    \sum_{j=1}^n V_j \, R_{(n-j) \kappa} \, N_{j \kappa}   +\mathscr{O}(\lambda^2) \, ,\\
			\label{tr formula}
			\mathrm{tr}\left[ \tilde S_{E,\lambda}^{n} \right] &= 
			2\, \cos(n\kappa) + \lambda\, \sin(n\kappa)\,  \sum_{j=1}^n V_j  
			+  \mathscr{O}(\lambda^2) \, .
			\end{align} 
		\end{lemma}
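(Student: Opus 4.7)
The plan is to prove the lemma by direct expansion of the matrix product defining $\tilde S_{E,\lambda}^n(x)$ and a short trigonometric identity. First I would rewrite the single-step cocycle as
$\tilde S_{E,\lambda}(f^{i}x) = R_\kappa + \lambda\, V_{i+1}(x)\, N_\kappa$
using the definition $V_{i+1}(x) = -v(f^{i}x)/\sin\kappa$ (re-indexing so that the factors entering the $n$-fold product are labelled $j=1,\dots,n$). Then I would write
$$\tilde S_{E,\lambda}^{n}(x) \;=\; \prod_{j=n}^{1}\bigl(R_\kappa + \lambda\, V_{j}\, N_\kappa\bigr)$$
and expand in powers of $\lambda$. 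The zeroth-order term is $R_\kappa^{n}=R_{n\kappa}$, and the first-order terms collect all products in which exactly one factor is replaced by $\lambda V_j N_\kappa$; the remaining matrices are rotations that combine to $R_{(n-j)\kappa}$ on the left and $R_{(j-1)\kappa}$ on the right, giving
$$\lambda\,\sum_{j=1}^{n} V_j\; R_{(n-j)\kappa}\,N_\kappa\,R_{(j-1)\kappa}.$$
All higher-order terms are uniformly bounded in $x$ (since $v$ is bounded and $\norm{N_\kappa}$, $\norm{R_\theta}$ are bounded), giving the $\mathscr{O}(\lambda^2)$ remainder.

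The key simplification is the identity $N_\theta R_\phi = N_{\theta+\phi}$, which follows immediately from the sine and cosine addition formulas applied to the entries of the product of
$\begin{bmatrix}\sin\theta & \cos\theta\\ 0 & 0\end{bmatrix}$
and $R_\phi$. Applying this with $\theta=\kappa$ and $\phi=(j-1)\kappa$ turns $N_\kappa R_{(j-1)\kappa}$ into $N_{j\kappa}$, which yields the claimed expansion
$$\tilde S_{E,\lambda}^n = R_{n\kappa} + \lambda\sum_{j=1}^{n} V_j\, R_{(n-j)\kappa}\, N_{j\kappa} + \mathscr{O}(\lambda^2).$$

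For the trace formula, I would take traces term by term. The rotation contributes $\mathrm{tr}\,R_{n\kappa} = 2\cos(n\kappa)$. For each summand, a direct computation of $R_{(n-j)\kappa}N_{j\kappa}$ shows its diagonal entries are $\cos((n-j)\kappa)\sin(j\kappa)$ and $\sin((n-j)\kappa)\cos(j\kappa)$, whose sum is $\sin(n\kappa)$ by the sine addition formula. Summing over $j$ produces the announced $\lambda\,\sin(n\kappa)\sum_{j=1}^n V_j$ correction. There is no real obstacle in this lemma; the only subtlety is being careful with the order of factors in the cocycle product and with the indexing of $V_j$, which I would handle explicitly at the outset.
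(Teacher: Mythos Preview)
Your proposal is correct and follows essentially the same approach as the paper: both expand the product $\prod_j (R_\kappa + \lambda V_j N_\kappa)$ to first order in $\lambda$, invoke the identity $N_\theta R_{\theta'} = N_{\theta+\theta'}$ to collapse $N_\kappa R_{(j-1)\kappa}$ into $N_{j\kappa}$, and then compute $\mathrm{tr}(R_{(n-j)\kappa}N_{j\kappa}) = \sin(n\kappa)$ via the sine addition formula. The only cosmetic difference is that the paper writes the expansion recursively ($Y_n = R_{n\kappa} + \lambda\sum_j V_j R_\kappa^{n-j} N_\kappa Y_{j-1}$ and then substitutes) whereas you expand the finite product directly; the content is identical.
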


		\begin{proof}
			write  $Y_n:= \tilde S_{E,\lambda}^{n}$. Then $Y_0=I$ and 
			\begin{align*}
			Y_n &=  R_{n\kappa} + \lambda\, \sum_{j=1}^n V_j\, R_\kappa^{n-j}\, N_\kappa\, Y_{j-1} \\
			&=  R_{n\kappa} + \lambda\, \sum_{j=1}^n V_j\, R_{(n-j)\kappa}\, N_\kappa\, \left( R_{(j-1)\kappa} + \lambda\, \sum_{i=1}^j V_i\, R_\kappa^{j-i}\, N_\kappa\, Y_{i-1} \right)  \\
			&=  R_{n\kappa} + \lambda\, \sum_{j=1}^n V_j\, R_{(n-j)\kappa}\, N_\kappa\,R_{(j-1)\kappa} +   \lambda^2\, \sum_{1\leq i<j\leq n} V_j\,  V_i\, R_\kappa^{j-i}\,  N_\kappa^2\, Y_{i-1}  \\
			&=  R_{n\kappa} + \lambda\, \sum_{j=1}^n V_j\, R_{(n-j)\kappa}\, N_{j \kappa} +   \mathscr{O}(\lambda^2)  \\
			\end{align*}
			We have used above that $N_\theta\, R_{\theta'}=N_{\theta+\theta'}$,
			so that $N_\kappa\, R_{(j-1)\kappa} = N_{j \kappa}$.

			For the second statement just notice that $\mathrm{tr}(R_{n\kappa})=2\,\cos(n \kappa)$
			and \, $\mathrm{tr}(R_{(n-j)\kappa}\, N_{j \kappa})=\sin(n\kappa)$,
			because $\mathrm{tr}(R_{\theta'}\, N_\theta) =\sin(\theta+\theta')$.
		\end{proof}

		Later, to prove Lemma~\ref{elliptic, hyperbolic criterion} we need to assume that the eigenvalues of $S_{E,\lambda}(p)$ are not roots of unity of orders $1$, $2$, $3$, $4$, $6$ or $8$.
		
		\begin{lemma}
			Given $\delta>0$ there exists $\lambda_0>0$ such that for all $0<\abs{\lambda}<\lambda_0$ and $\abs{E}<2-\delta$ there is some periodic point $q$ of period $m$ such that $S_{E,\lambda}^{m}(q)$ is elliptic with eigenvalues of orders $\neq$ $1$, $2$, $3$, $4$, $6$ and $8$ and \,  $\abs{\mathrm{tr}(S_{E,\lambda}^{m}(q))}<2-\delta$.
		\end{lemma}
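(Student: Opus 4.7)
The plan is to reformulate the conclusion as an open trace condition, use the Trace Formula~(\ref{tr formula}) to write this trace as a small perturbation of $2\cos(m\kappa)$, and then select a period $m$ and a periodic point $q$ so that the leading behaviour falls into the desired open set. Writing $E=2\cos\kappa$ with $\kappa\in[\kappa_0,\pi-\kappa_0]$ for $\kappa_0:=\arccos(1-\delta/2)$, the conclusion is equivalent to finding $(m,q)$ with
$$ \mathrm{tr}(S_{E,\lambda}^m(q))\in G_\delta := \bigl(-(2-\delta),\,2-\delta\bigr)\setminus\{0,\pm 1,\pm\sqrt{2}\}, $$
since an elliptic element of $\SL$ has eigenvalues of order $1,2,3,4,6$ or $8$ precisely when its trace lies in $\{\pm 2,\pm 1,0,\pm\sqrt{2}\}$. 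Applying Lemma~\ref{trace formula} and conjugation-invariance of the trace,
\begin{equation*}
\mathrm{tr}(S_{E,\lambda}^m(q))=2\cos(m\kappa)+\lambda\sin(m\kappa)\sum_{j=1}^m V_j(q)+O_m(\lambda^2).
\end{equation*}
Since $f|_\Lambda$ is topologically mixing it is conjugate to a mixing subshift of finite type, so periodic points of every sufficiently large period exist in $\Lambda$; a compactness argument on $\kappa\in[\kappa_0,\pi-\kappa_0]$ will then reduce matters to producing $(m,q)$ locally in $\kappa$.

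The argument naturally splits into two regimes. In the \emph{generic regime} the $\Z$-orbit $\{m\kappa\bmod 2\pi\}$ meets the complement of the finite bad set
$B:=\{0,\tfrac{\pi}{4},\tfrac{\pi}{3},\tfrac{\pi}{2},\tfrac{2\pi}{3},\tfrac{3\pi}{4},\pi,\tfrac{5\pi}{4},\tfrac{4\pi}{3},\tfrac{3\pi}{2},\tfrac{5\pi}{3},\tfrac{7\pi}{4}\}$;
one then picks $m=m(\kappa)$ with $\dist(m\kappa,B)\geq\eta>0$ in $\R/2\pi\Z$ and \emph{any} periodic $q$ of period $m$. Then $2\cos(m\kappa)$ lies uniformly in $G_{\delta/2}$, so the trace remains in $G_\delta$ for $\lambda$ small, and by continuity this $m$ stays valid on a neighbourhood of $\kappa$.

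The only $\kappa\in(0,\pi)$ whose entire cyclic orbit on $\R/2\pi\Z$ sits inside $B$ are the five \emph{exceptional} values $\{\tfrac{\pi}{4},\tfrac{\pi}{3},\tfrac{\pi}{2},\tfrac{2\pi}{3},\tfrac{3\pi}{4}\}$ (equivalently $E\in\{\pm\sqrt{2},\pm 1,0\}$). For these $\kappa$, and their small neighbourhoods, the leading term $2\cos(m\kappa)$ always equals a forbidden trace value, so the first-order $\lambda$-correction must be used. Invoking the hypothesis $\int v\,d\mu\neq 0$ together with the specification/shadowing property of the topologically mixing basic set, I would select two periodic points $q_1,q_2$ (possibly of different periods $m_1,m_2$, each with $\sin(m_i\kappa)\neq 0$) whose linear-in-$\lambda$ coefficients $c_i:=\sin(m_i\kappa)\sum_{j=1}^{m_i}V_j(q_i)$ are nonzero and distinct. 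Each affine-in-$\lambda$ map $\lambda\mapsto\mathrm{tr}(S_{E,\lambda}^{m_i}(q_i))$ hits the finite forbidden set $\{0,\pm 1,\pm\sqrt{2}\}$ at only finitely many $\lambda$'s; since $c_1\neq c_2$ these two finite sets are disjoint on a sufficiently small interval $(0,\lambda_{\mathrm{loc}})$, so for every such $\lambda$ at least one of $q_1,q_2$ yields a trace in $G_\delta$. Continuity extends this choice to a neighbourhood of each exceptional $\kappa$, and a finite sub-cover of $[\kappa_0,\pi-\kappa_0]$ produces a uniform $\lambda_0=\lambda_0(\delta)$.

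The main obstacle I anticipate is the exceptional regime: the period $m$ must remain bounded so that the constants hidden in $O_m(\lambda^2)$ do not force $\lambda_0\to 0$, while two periodic points with \emph{distinct} linear coefficients still have to be found. This is exactly where the hypothesis $\int v\,d\mu\neq 0$ enters decisively: it guarantees that Birkhoff sums of $v$ along periodic orbits do not collectively vanish, so the perturbative argument can actually shift the trace off each forbidden value. Without this hypothesis all the $c_i$ could conspire to vanish and the argument would break at precisely the five exceptional energies.
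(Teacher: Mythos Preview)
Your proposal is correct and shares the paper's two-regime split, but the implementations differ. In the generic regime the paper simply keeps the fixed point $p$ (period $m=1$), which works whenever $E$ itself is not a forbidden trace; you instead let $m$ vary and cover $[\kappa_0,\pi-\kappa_0]$ by compactness, which makes the uniformity in $E$ explicit (the paper is silent on what happens when $E$ approaches a forbidden value). Your list of five exceptional $\kappa$'s is also sharper than the paper's seven: the paper treats $\kappa\in\{\pi/6,5\pi/6\}$ as exceptional, but $\pm\sqrt{3}$ is not among the forbidden traces for orders $1,2,3,4,6,8$, so these cases are actually generic.

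In the exceptional regime the two arguments diverge more substantially. The paper lets the period $m$ grow with $1/|\lambda|$: using its auxiliary lemma on approximation of $\int V\,d\mu$ by Birkhoff averages along periodic orbits of prescribed period residue, it produces $q=q_m$ with $\sum_{j=1}^m V_j(q)\sim m\int V\,d\mu$, and then balances $m$ against $\lambda$ so that the trace lands in a fixed safe subinterval such as $(\sqrt{3},2-\delta)$. Your approach instead fixes $m$ once for each of the finitely many exceptional $\kappa$ and uses two periodic points. This is cleaner and directly answers your own worry about $O_m(\lambda^2)$: since $m$ is fixed, the implied constant is fixed and $\lambda_0$ does not degenerate. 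Your two-point reasoning is right but should be stated more precisely: for $\kappa'$ near $\kappa$ the trace of $q_i$ hits the single nearby forbidden value $2\cos(m_i\kappa)$ along a curve $\lambda\approx s_i(\kappa'-\kappa)$ through $(\kappa,0)$, and $c_1\neq c_2$ gives distinct slopes $s_1\neq s_2$, so the two bad curves meet only at $(\kappa,0)$ and for every $(\kappa',\lambda)$ with $\lambda\neq 0$ at least one $q_i$ succeeds. The existence of such $q_1,q_2$ (periods with $\sin(m_i\kappa)\neq 0$ and nonzero, distinct Birkhoff sums) follows from the same specification-with-prescribed-residue lemma the paper uses; you should invoke it explicitly rather than leave it as ``I would select''.
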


		\begin{proof}
			In our  context $\kappa\in ]0,\pi[$. If 
			$\kappa\notin \{\frac{\pi}{6},\frac{\pi}{4},\frac{\pi}{3},\frac{\pi}{2}, \frac{2\pi}{3},\frac{3\pi}{4},\frac{5\pi}{6} \} ,$
			or  equivalently if 
			$ E= 2\,\cos \kappa \notin \{-\sqrt{3},-\sqrt{2},-1,0,1,\sqrt{2},\sqrt{3}\}$
			we just need to keep  the fixed point $q=p$.
			Otherwise, in each of the four cases (i)
			$\kappa\in \{\frac{\pi}{6},  \frac{5\pi}{6} \}$,
			(ii) $\kappa\in \{\frac{\pi}{4}, \frac{3 \pi}{4}\}$,
			(iii)  $\kappa\in \{\frac{\pi}{3},  \frac{2\pi}{3} \}$
			and  (iv) $\kappa=\frac{\pi}{2}$ 
			we can use Lemma~\ref{trace formula} and Lemma~\ref{average along periodic orbits} below to find a matrix $S^m_{E,\lambda}(q)$ associated with a periodic point $q=f^m(q)$ such that
			$\sqrt{3} <\abs{ \mathrm{tr} ( S_{E,\lambda}^{m}(q) ) }<2-\delta $.

			In case (i) $\kappa\in \{\frac{\pi}{6},  \frac{5\pi}{6} \}$ there are infinitely many $m\in\N$ such that $2\,\cos(m\kappa)=\sqrt{3}$ and $\sin(m\kappa)=\pm \frac{1}{2}$. Choose $m$ with the appropriate remainder $r=(m\mod 6)$ so that  $\lambda\,\sin(m\kappa)$
			has the same sign as $\smallint V\,d\mu$.
			Then consider a sequence of periodic points $q=q_n$  provided by Lemma~\ref{average along periodic orbits} with periods $m=k_n$ such that $(m\mod 6)=r$.
			For these periodic points $q$ we have
			$$\lambda\, \sin(m\kappa)\, \sum_{j=1}^m V_j= \frac{\abs{\lambda}}{2}\, (m+o(m))\,\abs{\int V\, d\mu}$$
			and whence
			$$\mathrm{tr}\left[  S_{E,\lambda}^{m}(q) \right]  = 
			\sqrt{3} + \frac{\vert\lambda\vert}{2}\, (m+o(m)) \, \abs{\int V\, d\mu}
			+  \mathscr{O}(\lambda^2) .$$
			Taking $m$ large we can ensure this trace is above $\sqrt{3}$, while making $\lambda_0$ small enough
			we get many such $m$ where the trace is below $2-\delta$.

			In case (ii)  $\kappa\in \{\frac{\pi}{4}, \frac{3 \pi}{4}\}$ we find  a sequence of periodic points $q$ with periods $m$ such that
			$$\mathrm{tr}\left[  S_{E,\lambda}^{m}(q) \right]  = 
			\sqrt{2} + \frac{\vert\lambda\vert}{\sqrt{2}}\, (m+o(m)) \, \abs{\int V\, d\mu}
			+  \mathscr{O}(\lambda^2)   $$
			belongs to $]\sqrt{3}, 2-\delta[$.
			Similarly, in case (iii)  $\kappa\in \{\frac{\pi}{3},  \frac{2\pi}{3} \}$ we can find  a sequence of periodic points $q$ with periods $m$ such that
			$$\mathrm{tr}\left[  S_{E,\lambda}^{m}(q) \right]  = 
			-1 - \frac{\sqrt{3}\,\vert\lambda\vert}{2}\, (m+o(m)) \, \abs{\int V\, d\mu}
			+  \mathscr{O}(\lambda^2)  $$
			belongs to $]-2+\delta, -\sqrt{3}[$. 
			Finally in case (iv)  $\kappa=\frac{\pi}{2}$ there is a sequence of periodic points $q$ with odd periods $m$ such that
			$$\mathrm{tr}\left[  S_{E,\lambda}^{m}(q) \right]  = 
			\vert\lambda\vert \, (m+o(m)) \, \abs{\int V\, d\mu}
			+  \mathscr{O}(\lambda^2)  $$
			belongs to $]\sqrt{3}, 2-\delta [$. 
		\end{proof}

		\begin{lemma}
			\label{average along periodic orbits}
			There is 
			a sequence $\{p_n\}\subset \Lambda$ of periodic points with $\mathrm{per}(p_n)=k_n\to +\infty $ such that 
			$$ \lim_{n\to+\infty} \frac{1}{k_n}\,\sum_{j=0}^{k_n-1} v(f^j(p_n))= \int_\Lambda v\, d\mu .$$
			Moreover, given integers $d\in\N$ and $0\leq r <d$, the  $k_n$ can be chosen so that
			$k_n\mod{d}=r$.
		\end{lemma}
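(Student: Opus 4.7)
The plan is to combine two classical ingredients for uniformly hyperbolic basic sets: Sigmund's density theorem, which asserts that periodic measures are weak-$*$ dense in the space of $f$-invariant probability measures supported on a topologically mixing basic set $\Lambda$, and Bowen's specification property, which provides the flexibility to prescribe the period of closing-up orbits up to a bounded additive error.

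First, I would invoke Sigmund's theorem to obtain a sequence of periodic points $q_n \in \Lambda$ with periods $\ell_n \to \infty$ such that the empirical measures
$$\mu_n \;:=\; \frac{1}{\ell_n}\sum_{j=0}^{\ell_n-1}\delta_{f^j(q_n)}$$
converge to $\mu$ in the weak-$*$ topology. Since $v$ is continuous on the compact set $\Lambda$, evaluating against $v$ gives
$$\frac{1}{\ell_n}\sum_{j=0}^{\ell_n-1}v(f^j q_n) \;\longrightarrow\; \int_\Lambda v\,d\mu.$$
This already proves the first assertion without the congruence constraint.

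To enforce $k_n \bmod d = r$, I would use Bowen's specification property on $\Lambda$. Fix $\epsilon>0$; specification supplies an integer $N=N(\epsilon)$ such that any finite family of orbit segments can be $\epsilon$-shadowed and closed up by a periodic orbit, with transition gaps of length at most $N$. Given $q_n$ of period $\ell_n$, for each integer $s$ with $N \le s \le N+d-1$ there is a periodic point $p_{n,s}$ of period $\ell_n + s$ whose orbit $\epsilon$-shadows one full revolution of the orbit of $q_n$ followed by a filler segment of length $s$. The $d$ consecutive values $\ell_n+s$ exhaust all residues modulo $d$, so we may select $s=s(n)$ with $(\ell_n + s(n)) \bmod d = r$ and set $k_n := \ell_n + s(n)$, $p_n := p_{n,s(n)}$.

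The Birkhoff average of $v$ along the orbit of $p_n$ differs from that along $q_n$ by at most
$$\omega_v(\epsilon) \;+\; \frac{(N+d-1)\,\norm{v}_\infty}{\ell_n + s(n)},$$
where $\omega_v$ is the modulus of continuity of $v$: the shadowed portion contributes an error controlled by $\omega_v(\epsilon)$ uniformly in the length, while the filler segment contributes a vanishing $O(1/k_n)$ end-effect. Taking a diagonal subsequence in which $\epsilon \to 0$ and $\ell_n \to \infty$, we obtain $\frac{1}{k_n}\sum_{j=0}^{k_n-1}v(f^j p_n) \to \int v\,d\mu$ with $k_n \bmod d = r$, as required.

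The main obstacle is reconciling the two approximations simultaneously, i.e.\ verifying that the specification-produced periodic orbits do inherit the correct Birkhoff average from the Sigmund sequence; this is precisely what the shadowing estimate above delivers, since $\epsilon$-closeness of orbit segments translates into $\omega_v(\epsilon)$-closeness of the corresponding Birkhoff sums per unit length. Everything else (existence of $N(\epsilon)$, density of periodic measures) is standard for topologically mixing hyperbolic basic sets of $C^1$ diffeomorphisms.
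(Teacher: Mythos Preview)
Your argument is correct. Both proofs rest on the same underlying mechanism (closing lemmas for hyperbolic basic sets plus uniform continuity of $v$), but the packaging differs. The paper works by hand: it picks a single Birkhoff-generic recurrent point, closes long orbit segments via the Shadowing Lemma to obtain periodic points with the right averages, and handles the congruence $k_n\bmod d=r$ by choosing the generic point near a fixed point and padding the pseudo-orbit with repeated copies of that point before letting it run. You instead invoke two black-box results: Sigmund's density of periodic measures gives the approximating periodic orbits directly, and Bowen's periodic specification (in its form allowing the period to be any integer above a threshold) lets you freely adjust the residue class. Your route is cleaner for the congruence step since it does not rely on a fixed point being available, whereas the paper's padding trick implicitly uses one (which is harmless in their context because they have already passed to an iterate of $f$ with a fixed point). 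One small wording issue: you state specification with transition gaps ``at most $N$'' but then use it with gaps of any length $\ge N$; the latter is the correct formulation and is indeed what you need.
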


		\begin{proof}  
			Fix $\delta>0$ and a typical point $p\in \Lambda$ whose orbit is dense in $\Lambda$, with Birkhoff averages  $\frac{1}{n}\, \sum_{j=0}^{n-1}v(f^j p)$ converging to $\smallint v\, d\mu$. Next take $n_0\in\N$ such that 
			$\abs{ \frac{1}{n}\, \sum_{j=0}^{n-1}v(f^j p)-\smallint v\, d\mu }<\delta/2$  for all $n\geq n_0$.
			
			By uniform continuity  there exists $\varepsilon>0$ such that given $x,y\in \Lambda$ with $d(x,y)<\varepsilon$, $\abs{v(x)-v(y)}<\delta/2$.
			By the Shadowing Lemma~\cite[Proposition 8.20]{Shu87} there exists $\eta>0$ such that every periodic $\eta$-pseudo orbit of $f$ is $\varepsilon$-shadowed by some $f$-periodic orbit.
			Next choose a sequence of iterates $k_n\to +\infty$ such that $k_n\geq n_0$ and $d(p, f^{k_n}(p))<\eta$. By the Shadowing Lemma there exist periodic points $p_n$ with $\mathrm{per}(p_n)=k_n$ whose orbits are $\varepsilon$-near to the closed $\eta$-pseudo orbit
			$\{f^j(p)\}_{0\leq j \leq k_n-1}$.
			Putting these facts together
			\begin{align*}
			\abs{ \int v\, d\mu - \frac{1}{k_n}\,\sum_{j=0}^{k_n-1} v(f^j(p_n)) } &\leq 
			\abs{  \int v\, d\mu - \frac{1}{k_n}\,\sum_{j=0}^{k_n-1} v(f^j(p)) } \\
			& \qquad  + \frac{1}{k_n}\,\sum_{j=0}^{k_n-1} \abs{ v(f^j(p))- v(f^j(p_n)) } \\
			&\leq   \frac{\delta}{2} + \frac{1}{k_n}\, \sum_{j=0}^{k_n-1} \frac{\delta}{2} =\delta  
			\end{align*}
			which proves the  averages along the periodic orbits converge to the spatial average. Finally, given $d\in \N$, $r=0,1,\ldots, d-1$ and choosing the starting point $p$ to be $\eta$-close to a fixed point of $f$ we can take an
			$\eta$-pseudo orbit of the form $p,p,\ldots, p, f(p), f^2(p),\ldots, f^m(p)$, with $d(p,f^m(p))<\eta$, of length $k_n$ such that  $k_n \mod d=r$.
		\end{proof}

		The next couple of lemmas  hold in a context where   
		$f:\Lambda\to \Lambda$ is the same hyperbolic map but where  $A:\Lambda\to \SL$ is
		any H\"older continuous fiber-bunched cocycle.
		Given fixed points $p,p'\in \Lambda$ of $f$ 
		and a  heteroclinic point  $z\in \Wuloc(p)$ with $f^l(z)\in \Wsloc(p')$,  $l\in\N$,
		we define the {\em transition  map}  
		$$\psi_{p,z,f^l(z),p'}:\R^2\to\R^2\quad \text{ by }\quad \psi_{p,z,f^l(z),p'} := H^s_{f^l(z), p'}\, A^l(z)\, H^u_{p, z}.$$

		\begin{lemma}
			\label{monodromy}
			Let $p,p'\in \Lambda$ be fixed points  of $f$ and consider heteroclinic points
			$z\in \Wuloc(p)$ with $f^l(z)\in \Wsloc(p')$ and
			$z'\in \Wsloc(p)$ with $f^{-l'}(z')\in \Wuloc(p')$.
			If $p'$ is elliptic then 
			for every $m\in\N$ there are sequences of homoclinic points
			$q_n\in \Wuloc(p)$ converging to $z$ and $q_n'\in \Wsloc(p)$ converging to $z'$ such that $q_n'$ is a forward iterate of $q_n$ and 
			$$ \lim_{n\to \infty} \psi_{p,q_n, q_n', p}= \psi_{p', f^{-l'}(z'), z',p}\, A(p')^m\, \psi_{p,z,f^l(z),p'} .$$
		\end{lemma}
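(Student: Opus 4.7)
The plan is to construct the homoclinic points $q_n$ explicitly using the local product structure at $p'$, then expand the transition map $\psi_{p,q_n,q_n',p}$ along the orbit and pass to the limit using the holonomy identities from Section~\ref{holonomy}.

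For the construction, note that $f^l(z)\in \Wsloc(p')$ while $f^{-l'-n}(z')\to p'$ along $\Wuloc(p')$ as $n\to \infty$, so for $n$ large the local product structure at $p'$ yields a unique intersection
$$y_n:=\Wuloc(f^l(z))\cap \Wsloc(f^{-l'-n}(z')).$$
Setting $q_n:=f^{-l}(y_n)$, invariance of stable and unstable sets gives $q_n\in \Wuloc(z)\subseteq \Wuloc(p)$ and $q_n':=f^{l+n+l'}(q_n)\in \Wsloc(z')\subseteq \Wsloc(p)$, so $q_n$ is homoclinic to $p$. The contraction of $\Wsloc$ under forward iteration and of $\Wuloc$ under backward iteration then forces $y_n\to f^l(z)$ and $f^n(y_n)\to f^{-l'}(z')$, hence $q_n\to z$ and $q_n'\to z'$.

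Next I decompose the cocycle along the orbit as $A^{l+n+l'}(q_n)=A^{l'}(f^n(y_n))\,A^n(y_n)\,A^l(q_n)$ and match pieces with the claimed product. The outer factors converge by continuity of $A$ and of the holonomies: $H^u_{p,q_n}\to H^u_{p,z}$, $A^l(q_n)\to A^l(z)$, $A^{l'}(f^n(y_n))\to A^{l'}(f^{-l'}(z'))$, and $H^s_{q_n',p}\to H^s_{z',p}$. For the middle factor, applying the unstable holonomy identity along $\Wuloc(f^l(z))$ and then the stable holonomy identity along $\Wsloc(p')$ (using $f(p')=p'$) yields
$$A^n(y_n)= \bigl[H^u_{f^{l+n}(z),f^n(y_n)}\,H^s_{p',f^{l+n}(z)}\bigr]\, A(p')^n\, \bigl[H^s_{p',f^l(z)}^{-1}\,H^u_{f^l(z),y_n}^{-1}\bigr],$$
in which the bracketed factors converge respectively to $H^u_{p',f^{-l'}(z')}$ and to $H^s_{f^l(z),p'}=H^s_{p',f^l(z)}^{-1}$.

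The remaining obstacle is the central factor $A(p')^n$, which does not converge in $n$. This is where the ellipticity of $A(p')$ enters: its powers lie in a compact subgroup of $\SL$, so for each fixed $m\in\N$ there is a subsequence $n_k\to\infty$ along which $A(p')^{n_k}\to A(p')^m$ (by periodicity if the rotation angle is rational, by density of the orbit in the rotation group otherwise). Restricting to this subsequence and assembling all the converging pieces in the correct order produces exactly $\psi_{p',f^{-l'}(z'),z',p}\,A(p')^m\,\psi_{p,z,f^l(z),p'}$, and relabelling the subsequence as $q_n$ completes the proof.
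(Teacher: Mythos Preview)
Your construction of the homoclinic points $q_n$ via the local product at $p'$ and the use of ellipticity to pass to a subsequence with $A(p')^{n_k}\to A(p')^m$ are exactly as in the paper's proof. The only organizational difference is in the treatment of the central block $A^n(y_n)$: the paper splits $n=s_n+r_n$, replaces $A^{l+s_n}(q_n)$ by $A^{l+s_n}(z)$ (and similarly on the other side) via the fiber-bunching estimate, and then recognises $A(p')^{-s_n}A^{s_n}(f^l z)$ and $A^{r_n}(f^{-l'-r_n}(z'))A(p')^{-r_n}$ as the very limits that define $H^s_{f^l(z),p'}$ and $H^u_{p',f^{-l'}(z')}$. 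You instead factor $A^n(y_n)$ directly through the holonomy identities along $\Wuloc(f^l(z))$ and $\Wsloc(p')$, and then let the bracketed holonomy products converge by property~(4) of the holonomies. Both routes give the same limit; yours is algebraically cleaner, while the paper's makes the role of fiber bunching explicit. One point you might make precise is that the forward unstable-holonomy identity $A^n(y_n)=H^u_{f^{l+n}(z),f^n(y_n)}A^n(f^l(z))H^u_{y_n,f^l(z)}$ requires $f^j(y_n)\in\Wuloc(f^{l+j}(z))$ for all $0\le j\le n$; this holds here because the intermediate iterates all shadow the fixed point $p'$, but it deserves a sentence.
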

		
		\begin{proof}
			Take a sequence $y_n$ converging to $f^l(z)$ in the intersection of the curves $f^{-n}\left( \Wsloc(f^{-l'}(z'))\right)$ and $\Wuloc(f^{l}(z))$. Defining $q_n:=f^{-l}(y_n)$ and $q_n':=f^{n+l'}(y_n)$, the hyperpolicity at $p'$ implies
			that $f^l(q_n)=y_n\to f^l(z)$ and, since $l$ is fixed, $q_n\to z$.
			It also implies that $f^{-l'}(q_n')=f^n(y_n)\to f^{-l'}(z')$ and $q_n'\to z'$.
			See Figure~\ref{hetero}.	Next, given $m\in\N$, by Poincar\'e Recurrence Theorem,
			since $A(p')$ is elliptic there is a sequence of integers $t_n\to \infty$
			such that $A(p')^{t_n}\to A(p')^m$ and we reset $q_n:=q_{t_n}$, $q_n':=q_{t_n}'$, etc. Break $t_n$ as a sum
			$t_n=s_n+r_n$ of two divergent sequences of integers $r_n, s_n\to \infty$, for instance $s_n:=\lfloor t_n/2 \rfloor$ and $r_n:=t_n-s_n$.
			The hyperbolicity at $p'$ implies that
			$ f^{l+s_n}(q_n)=f^{s_n}(y_n)=f^{-l'-r_n}(q_n')$ converges to $p'$.
			Then we have
			\begin{align*}
			& \lim_{n\to \infty} \psi_{p,q_n,q_n', p}   = \lim_{n\to \infty} H^s_{q_n', p}\,  A^{l+t_n+l'}(q_n)\, H^u_{p, q_n}\\
			&\qquad  = \lim_{n\to \infty} H^s_{q_n', p}\, A^{r_n+l'}(f^{-l'-r_n}(q_n'))\, A^{l+s_n}(q_n)\, H^u_{p, q_n}\\
			&\qquad = \lim_{n\to \infty} H^s_{z', p}\, A^{r_n+l'}(f^{-l'-r_n}(z')) \, A(p')^{-r_n}\, A(p')^{  t_n} \, A(p')^{-s_n}\,  A^{l+s_n}(z)\, H^u_{p, z}\\
			&\qquad = \lim_{n\to \infty} \psi_{p', f^{-l'}(z'),z',p}\, A(p')^{t_n} \,
			\psi_{p,z,f^l(z),p'} \\
			&\qquad = \; \psi_{p', f^{-l'}(z'),z',p}\, A(p')^{m} \,
			\psi_{p,z,f^l(z),p'} .
			\end{align*}
			
			\begin{figure}
				\begin{center}
					\includegraphics[width=12cm]{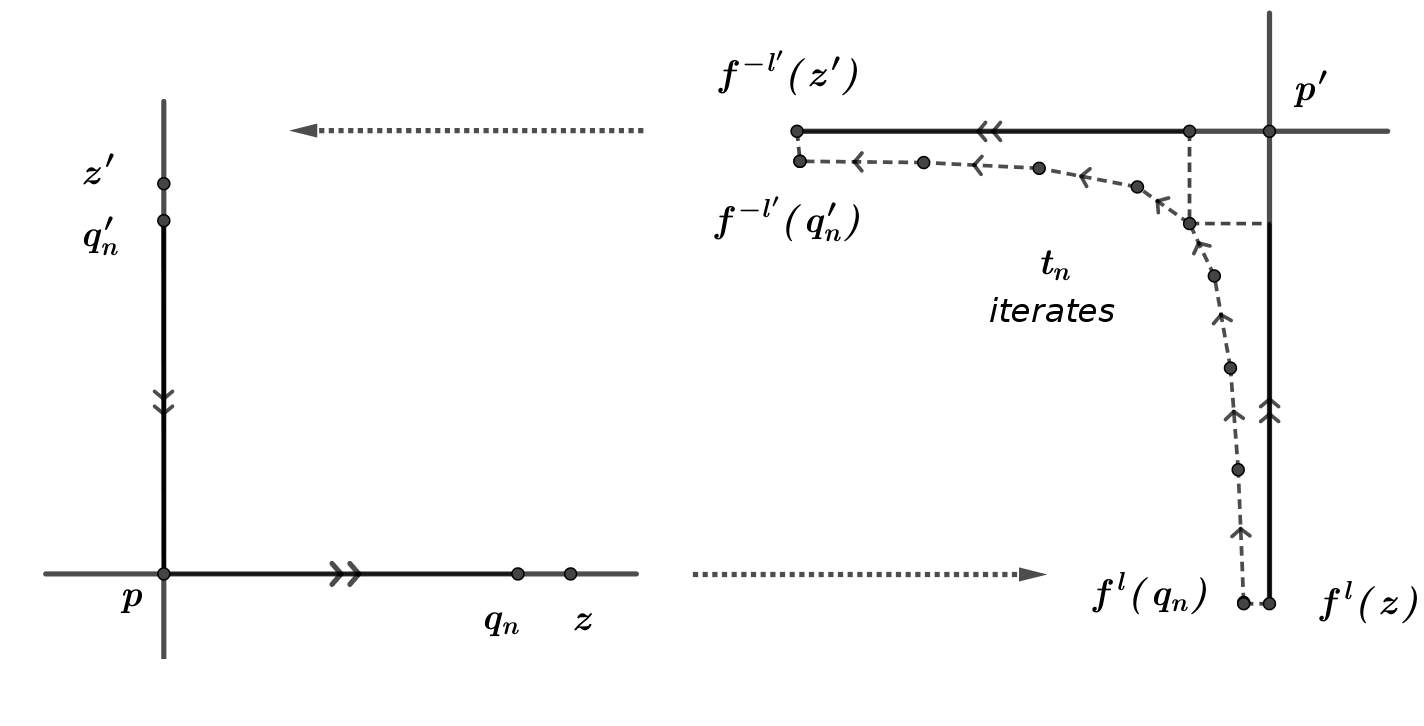}
				\end{center}
				\caption{Construction of homoclinic orbits shadowing the heteroclinic cycle
					$p\leadsto p' \leadsto p$. }
				\label{hetero}
			\end{figure}
			
			In the third step we use the continuity of the holonomies.
			Notice also that the hyperbolicity at $p'$ implies that the convergences $q_n \to z$,
			$f^{-l'-r_n}(q_n')\to p'$ and $f^{-l'-r_n}(z')\to p'$  are geometric with a rate which matches the strength of the base dynamics's hyperbolicity.
			Hence because of the fiber-bunched assumption,
			\begin{align*}
			& \lim_{n\to \infty} \norm{A^{l+s_n}(q_n)-A^{l+s_n}(z)}=0 \quad \text{ and }\\
			& \lim_{n\to \infty} \norm{A^{r_n+l'}(f^{-l'-r_n}(q_n'))-A^{r_n+l'}(f^{-l'-r_n}(z'))}=0 .
			\end{align*}

			Finally in the fourth step we use that
			\begin{align*}
			H^s_{f^l(z), p'} &= \lim_{n\to \infty} A(p')^{-s_n}\, A^{s_n}(f^l(z)) , \\
			H^u_{ p',f^{-l'}(z')} &= \lim_{n\to \infty} A^{r_n}(f^{-l'-r_n}(z'))\, A(p')^{-r_n}  .
			\end{align*}
			This concludes the proof.
		\end{proof}

		\begin{lemma}
			\label{elliptic, hyperbolic criterion}
			If $f$ admits two periodic points $p,p'\in \Lambda$, with periods $k$ and $k'$ respectively, such that $A^{k}(p)$ is hyperbolic and
			$A^{k'}(p')$ is elliptic with eigenvalues which are not roots of unity with orders $1$, $2$, $3$, $4$, $6$ or $8$,
			then  the cocycle satisfies pinching and twisting and hence has positive Lyapunov exponent.
		\end{lemma}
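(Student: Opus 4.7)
The plan is to verify the pinching and twisting conditions (Definition~\ref{pinching and twisting}) at the periodic point $a := p$, using the elliptic point $p'$ as an auxiliary tool to produce a rich enough family of transition maps via Lemma~\ref{monodromy}. For simplicity (at the cost of raising the period), I pass to a power of $f$ so that both $p$ and $p'$ are fixed; the pinching condition at $p$ is immediate because $A(p)\in\SL$ is hyperbolic, so it has two real eigenvalues $\lambda,\lambda^{-1}$ of distinct absolute value with associated eigendirections $e_1,e_2$. In dimension two, the twisting condition reduces to finding a homoclinic point $z_0 \in \Wuloc(p)$ with $f^{j}(z_0)\in \Wsloc(p)$ whose transition map $\psi_{p,z_0,f^j(z_0)}$ sends neither $e_1$ nor $e_2$ into the line $\R e_1$ or $\R e_2$ (and the same for its inverse).

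Since $\Lambda$ is a topologically mixing basic set, there exist heteroclinic points
$z \in \Wuloc(p)$ with $f^l(z)\in \Wsloc(p')$, and $z'\in \Wsloc(p)$ with $f^{-l'}(z')\in \Wuloc(p')$. Applying Lemma~\ref{monodromy} to the heteroclinic cycle $p \leadsto p' \leadsto p$, I obtain, for any $m\in\N$, sequences of homoclinic points $q_n \to z$, $q_n'\to z'$ (with $q_n'$ a forward iterate of $q_n$) such that the transition maps $\psi_{p,q_n,q_n'}$ converge to
\[
\Psi_m \,:=\, \psi_{p',f^{-l'}(z'),z',p}\,\cdot\, A(p')^m \,\cdot\, \psi_{p,z,f^l(z),p'} \;=\; G\,A(p')^m\,H ,
\]
where $G$ and $H$ are fixed invertible matrices determined by the heteroclinic data. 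Since the twisting condition is an open condition on the transition map, it suffices to exhibit a single $m$ such that $\Psi_m$ is twisting with respect to $e_1,e_2$; the corresponding $\psi_{p,q_n,q_n'}$ will then twist for all large $n$ (and by construction $q_n,q_n'\in\supp\mu$, since $\Lambda$ is the support of $\mu$).

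The main computation is then to count the bad values of $m$. I work in the (complex) eigenbasis of $A(p')$, in which $A(p')^m = \mathrm{diag}(\mu^m,\bar\mu^m)$ with $|\mu|=1$. The $(i,j)$-entry of $\Psi_m$ in the eigenbasis of $A(p)$ takes the form $\alpha_{ij}\mu^m + \beta_{ij}\bar\mu^m$ for constants $\alpha_{ij},\beta_{ij}$ coming from $G$ and $H$. A bad $m$ at entry $(i,j)$ means this linear combination vanishes, equivalently $\mu^{2m} = -\beta_{ij}/\alpha_{ij}$ (assuming $\alpha_{ij}\neq 0$; the degenerate case $\alpha_{ij}=\beta_{ij}=0$ would force $\Psi_m$ to be non-invertible, which is excluded). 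There are at most four such bad entries, each eliminating a set of $m$ of cardinality equal to the order of $\mu^2$ in the cyclic group generated. Under the hypothesis that $\mu$ is not a root of unity of order $1,2,3,4,6$ or $8$, the element $\mu^2$ either has infinite order or has order strictly greater than $4$, so the set $\{\mu^{2m}\}$ contains strictly more than four elements, and a good $m$ can be chosen. The symmetric analysis for $\Psi_m^{-1} = H^{-1}A(p')^{-m}G^{-1}$ produces only finitely many further bad $m$, so a single choice of $m$ fulfills all the conditions simultaneously.

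The principal technical obstacle is precisely this counting argument: one must verify that the two "non-degeneracy" possibilities at each bad entry cannot conspire to eliminate all values of $m$ permitted by the hypothesis on the order of $\mu$. Once this is done, the twisting is secured, the pinching at $p$ was free, and Definition~\ref{pinching and twisting} holds for the cocycle. By the main Theorem~\ref{theorem.main} (or directly by~\cite{BoV04}) this forces $L_1(A,\mu)>L_2(A,\mu)$, and in the $\SL$ setting this is equivalent to $L_1(A,\mu)>0$.
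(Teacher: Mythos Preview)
Your approach is the same as the paper's: reduce to fixed points, note pinching is immediate, then use Lemma~\ref{monodromy} to approximate transition maps by $\Psi_m = G\,A(p')^m\,H$ and show some $m$ gives twisting. The paper phrases the counting projectively (the orbit of $\hat A(p')$ on $\mathbb{P}^1$ has period $\geq 5$, so the two-point set $\{\hat e_1^\ast,\hat e_2^\ast\}$ cannot be rotated into $\{\hat e_1^\sharp,\hat e_2^\sharp\}$ for every $m\in\{1,\dots,5\}$), while you phrase it algebraically via entries $\alpha_{ij}\mu^m+\overline{\alpha_{ij}}\,\bar\mu^m$; these are equivalent, since the projective period of $\hat A(p')$ equals the multiplicative order of $\mu^2$.

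There is, however, a genuine gap in your counting. After correctly arguing that the four entries of $\Psi_m$ give four forbidden values of $\mu^{2m}$ and that the order of $\mu^2$ is $\geq 5$, you add that ``$\Psi_m^{-1}$ produces only finitely many further bad $m$''. This is false when $\mu$ has finite order: each bad condition then rules out an entire residue class of $m$, not finitely many. If you actually had eight independent conditions (four from $\Psi_m$ and four from $\Psi_m^{-1}$), an order-$5$ case would leave no good $m$. The repair is to observe that in $\SL$ the conditions for $\Psi_m^{-1}$ are \emph{the same} as for $\Psi_m$: for a $2\times 2$ matrix of determinant $1$ the entries of the inverse are (up to sign) a permutation of the original entries, so no new forbidden values arise and four conditions suffice. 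A smaller issue: your dismissal of the degenerate case ``$\alpha_{ij}=\beta_{ij}=0$ would force $\Psi_m$ non-invertible'' is not correct as stated (a single zero entry does not kill invertibility); the right reason is that $\beta_{ij}=\overline{\alpha_{ij}}$ via the complex-conjugate eigenbasis of $A(p')$, and $\alpha_{ij}=(GP)_{i1}(P^{-1}H)_{1j}=0$ forces an entire row of $GP$ or column of $P^{-1}H$ to vanish, contradicting invertibility of $G$ or $H$.
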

		
		\begin{proof}
			Working with the power $f^{k k'}$, we can assume that $k=k'=1$.
			Then $A(p)$ is hyperbolic and $A(p')$ is elliptic.
			The pinching condition follows from the hyperbolicity of  $A(p)$. We are left to prove the twisting condition for some transition map $\Phi$ associated with a homoclinic loop of $p$. Let $\{e_1,e_2\}$ be the eigenvector basis of $A(p)$  and $\hat e_1$, $\hat e_2$ be the corresponding projective points. We want to prove that  $\hat \Phi \hat\, e_1\neq \hat e_2$ and $\hat \Phi \,\hat e_2\neq \hat e_1$, where 
			$\hat \Phi$ stands for the projective action of $\Phi$.

			Take heteroclinic points $z$ and $z'$ such that $z\in \Wuloc(p)$ with $f^l(z)\in \Wsloc(p')$ and $z'\in \Wsloc(p)$ with
			$f^{-l'}(z')\in \Wuloc(p')$, where $l,l'\in\N$.
			Set  $\Psi_0:=\psi_{p,z,f^l(z),p'}$ and
			$\Psi_1:=\psi_{p',f^{-l'}(z'), z', p}$. By Lemma~\ref{monodromy} for every $m\in\N$,
			the $\SL$ matrix $\Phi_m:=\Psi_1\, A(p')^m\, \Psi_0$ can be approximated by transition maps of homoclinic loops of $p$. Let $\hat e_i^\ast:= \hat \Psi_0 \, \hat e_i$
			and $\hat e_i^\sharp:= \hat \Psi_1^{-1} \, \hat e_i$ for $i=1,2$. With this notation we  need
			to find $m\in\N$ such that $\{\hat e_1^\sharp, \hat e_2^\sharp\} \cap \hat A(p')^m \{\hat e_1^\ast ,\hat e_2^\ast\}  =\emptyset$. This suffices because any  transition map of a homoclinic loop of $p$ that approximates $\Phi_m$ well enough   satisfies the twisting condition.
			The assumption that the eigenvalues of $A(p')$ are not roots of unity with orders $1$, $2$, $3$, $4$, $6$ or $8$ 
			implies that the projective automorphism $\hat A(p')$ is either aperiodic or else it has period $\geq 5$. We claim there exists $1\leq m\leq 5$
			such that 
			\begin{equation}
			\label{disj}
			\{\hat e_1^\sharp, \hat e_2^\sharp\} \cap \hat A(p')^m \{\hat e_1^\ast ,\hat e_2^\ast\}  =\emptyset .
			\end{equation}
			Let  
			$\Oscr(\hat x) :=\{\hat A(p')^m\hat x\colon m\in\Z\}$ denote the $\hat A(p')$-orbit of a projective point $\hat x$. 
			If $\Oscr(\hat e_1^\ast)\neq \Oscr(\hat e_2^\ast)$ 
			these orbits are disjoint and it is not difficult to see that
			there are at least three  $m\in \{1,\ldots, 5\}$ such that~\eqref{disj} holds. Otherwise, if $\Oscr(\hat e_1^\ast) =\Oscr(\hat e_2^\ast)$ 
			and this orbit is disjoint from $\{\hat e_1^\sharp, \hat e_2^\sharp\}$
			then~\eqref{disj} holds for all $m\in\N$. If $\Oscr(\hat e_1^\ast) =\Oscr(\hat e_2^\ast)$ 
			and this orbit contains one element from $\{\hat e_1^\sharp, \hat e_2^\sharp\}$
			then~\eqref{disj} holds for three $m\in \{1,\ldots, 5\}$. 
			Finally, if $\Oscr(\hat e_1^\ast) =\Oscr(\hat e_2^\ast)$ 
			and this orbit contains  $\{\hat e_1^\sharp, \hat e_2^\sharp\}$
			then~\eqref{disj} holds for at least one $m\in \{1,\ldots, 5\}$. 
			This concludes the claim's proof.
			
			By Avila-Viana simplicity criterion, the positivity of the Lyapunov exponent follows.
		\end{proof}

		Since $\int v\, d\mu\neq 0$ without loss of generality we can assume that $\int v\, d\mu> 0$. Take 
		$0<\varepsilon<\frac{1}{2}\, \int v\, d\mu$. Next choose a Birkhoff generic point  $q\in \Lambda$ for the averages of $v$ which is also an $f$-recurrent point near the fixed point  $p$. More precisely choose $q\in \Lambda$ and  $n\in\N$  such that $d(p,q)<\varepsilon/4$,
		$d(f^{n}(q), q)<\varepsilon/4$ and 
		$\frac{1}{n}\, \sum_{j=1}^n v(T^j q)\geq \frac{4}{5}\, \int v\, d\mu$. 
		If $\varepsilon$ is small enough
		by the shadowing property there exists a periodic point $q_n=f^n(q_n)$  $\varepsilon/2$-near $q$. By the uniform continuity of $v$, if $\varepsilon$ is small enough
		we have
		$\frac{1}{n}\, \sum_{j=1}^n v(T^j q_n)\geq \frac{2}{3}\, \int v\, d\mu$.

		Thus by~\eqref{tr formula} provided  $\lambda_0$ is small enough,
		we have for all $\abs{\lambda}<\lambda_0$
		$$ \mathrm{tr}\left[ \tilde S_{E,\lambda}^{n} (q_n)\right]  \geq  
		2\, \cos(n\kappa) - n\, \lambda\, \frac{ \sin(n\kappa) }{\sin \kappa} \, \frac{1}{2}\, \int v\, d\mu  \gg 2 
		$$
		which implies that $ S_{E,\lambda}^{n} (q_n)$ is  hyperbolic.
		
		Finally,  Theorem~\ref{prop:LE(example)>0} follows
		by Lemma~\ref{elliptic, hyperbolic criterion}.
	\end{proof}

	\medskip

	\subsection*{Acknowledgments}
	
	The first author was supported  by Funda\c{c}\~{a}o para a Ci\^{e}ncia e a Tecnologia, under the projects: UID/MAT/04561/2013 and   PTDC/MAT-PUR/29126/2017.
The second author has been supported by the CNPq research grants 306369/2017-6 and 313777/2020-9. The third author was also supported by Instituto Serrapilheira, grant ``Jangada Din\^amica: Impulsionando Sistemas Din\^amicos na Regi\~ao Nordeste".

\bigskip

	\bibliographystyle{amsplain} 

\begin{thebibliography}{10}
		
		\bibitem{AY81}
		V.~M. Alekseev and M.~V. Yakobson, \emph{Symbolic dynamics and hyperbolic
			dynamic systems}, Phys. Rep. \textbf{75} (1981), no.~5, 287--325.
		
		\bibitem{AvV1}
		A.~Avila and M.~Viana, \emph{Simplicity of {L}yapunov spectra: a sufficient
			criterion}, Port. Math. \textbf{64} (2007), 311--376.
		
		\bibitem{BBB}
		Lucas Backes, Aaron Brown, and Clark Butler, \emph{Continuity of {L}yapunov
			exponents for cocycles with invariant holonomies}, J. Mod. Dyn. \textbf{12}
		(2018), 223--260.
		
		\bibitem{Baladi91}
		V.~Baladi, \emph{Gibbs states and equilibrium states for finitely presented
			dynamical systems}, J. Statist. Phys. \textbf{62} (1991), no.~1-2, 239--256.
		
		\bibitem{BD}
		Alexandre Baraviera and Pedro Duarte, \emph{Approximating {L}yapunov exponents
			and stationary measures}, J. Dynam. Differential Equations \textbf{31}
		(2019), no.~1, 25--48. \MR{3935134}
		
		\bibitem{BV}
		Carlos Bocker-Neto and Marcelo Viana, \emph{Continuity of {L}yapunov exponents
			for random two-dimensional matrices}, Ergodic Theory and Dynamical Systems
		(2016), 1--30.
		
		\bibitem{BGV03}
		C.~Bonatti, X.~G{\'o}mez-Mont, and M.~Viana, \emph{G\'en\'ericit\'e d'exposants
			de {L}yapunov non-nuls pour des produits d\'eterministes de matrices}, Ann.
		Inst. H. Poincar\'e Anal. Non Lin\'eaire \textbf{20} (2003), 579--624.
		
		\bibitem{BoV04}
		C.~Bonatti and M.~Viana, \emph{Lyapunov exponents with multiplicity 1 for
			deterministic products of matrices}, Ergod. Th. {\&} Dynam. Sys \textbf{24}
		(2004), 1295--1330.
		
		\bibitem{Bou}
		Philippe Bougerol, \emph{Th\'eor\`emes limite pour les syst\`emes lin\'eaires
			\`a coefficients markoviens}, Probab. Theory Related Fields \textbf{78}
		(1988), no.~2, 193--221.
		
		\bibitem{BL}
		Philippe Bougerol and Jean Lacroix, \emph{Products of random matrices with
			applications to {S}chr\"odinger operators}, Progress in Probability and
		Statistics, vol.~8, Birkh\"auser Boston, Inc., Boston, MA, 1985.
		
		\bibitem{B-G-first}
		J.~Bourgain and M.~Goldstein, \emph{On nonperturbative localization with
			quasi-periodic potential}, Ann. of Math. (2) \textbf{152} (2000), no.~3,
		835--879.
		
		\bibitem{BS00}
		Jean Bourgain and Wilhelm Schlag, \emph{Anderson localization for
			{S}chr\"odinger operators on {$\bf Z$} with strongly mixing potentials},
		Comm. Math. Phys. \textbf{215} (2000), no.~1, 143--175. \MR{1800921
			(2002d:81054)}
		
		\bibitem{Bowen}
		Rufus Bowen, \emph{Equilibrium states and the ergodic theory of {A}nosov
			diffeomorphisms}, revised ed., Lecture Notes in Mathematics, vol. 470,
		Springer-Verlag, Berlin, 2008, With a preface by David Ruelle, Edited by
		Jean-Ren\'{e} Chazottes.
		
		\bibitem{GB97}
		A.~Boyarsky and P.~G{\'o}ra, \emph{Invariant measures and dynamical systems in
			one dimension}, Probability and its Applications, Birkh\"auser, 1997.
		
		\bibitem{CS95}
		Victor Chulaevsky and Thomas Spencer, \emph{Positive lyapunov exponents for a
			class of deterministic potentials}, Comm. Math. Phys. \textbf{168} (1995),
		no.~3, 455--466.
		
		\bibitem{David-survey}
		David Damanik, \emph{Schr\"odinger operators with dynamically defined
			potentials}, Ergodic Theory Dynam. Systems \textbf{37} (2017), no.~6,
		1681--1764.
		
		\bibitem{DK-book}
		Pedro Duarte and Silvius Klein, \emph{{L}yapunov exponents of linear cocycles;
			continuity via large deviations}, Atlantis Studies in Dynamical Systems,
		vol.~3, Atlantis Press, 2016.
		
		\bibitem{DK-Holder}
		\bysame, \emph{Large deviations for products of random two dimensional
			matrices}, Comm. Math. Phys. \textbf{375} (2020), no.~3, 2191--2257.
		
		\bibitem{Go69}
		M.~I. Gordin, \emph{The central limit theorem for stationary processes}, Dokl.
		Akad. Nauk. SSSR \textbf{188} (1969), 1174--1176.
		
		\bibitem{GouS}
		S\'{e}bastien Gou\"{e}zel and Luchezar Stoyanov, \emph{Quantitative {P}esin
			theory for {A}nosov diffeomorphisms and flows}, Ergodic Theory Dynam. Systems
		\textbf{39} (2019), no.~1, 159--200.
		
		\bibitem{IM50}
		C.~T. Ionescu-Tulcea and G.~Marinescu, \emph{Th\'eorie ergodique pour des
			classes d'operations non compl\`etement continues}, Annals of Math.
		\textbf{52} (1950), 140--147.
		
		\bibitem{Jit-AL-random}
		Svetlana Jitomirskaya and Xiaowen Zhu, \emph{Large deviations of the {L}yapunov
			exponent and localization for the 1{D} {A}nderson model}, Comm. Math. Phys.
		\textbf{370} (2019), no.~1, 311--324.
		
		\bibitem{LP}
		{\'E}mile Le~Page, \emph{Th\'eor\`emes limites pour les produits de matrices
			al\'eatoires}, Probability measures on groups ({O}berwolfach, 1981), Lecture
		Notes in Math., vol. 928, Springer, Berlin-New York, 1982, pp.~258--303.
		
		\bibitem{LePage}
		\'Emile Le~Page, \emph{R\'egularit\'e du plus grand exposant caract\'eristique
			des produits de matrices al\'eatoires ind\'ependantes et applications}, Ann.
		Inst. H. Poincar\'e Probab. Statist. \textbf{25} (1989), no.~2, 109--142.
		
		\bibitem{Viana-M}
		Ela{\'{\i}}s~C. Malheiro and Marcelo Viana, \emph{Lyapunov exponents of linear
			cocycles over {M}arkov shifts}, Stoch. Dyn. \textbf{15} (2015), no.~3,
		1550020, 27.
		
		\bibitem{Ombach}
		Jerzy Ombach, \emph{Equivalent conditions for hyperbolic coordinates}, Topology
		Appl. \textbf{23} (1986), no.~1, 87--90.
		
		\bibitem{park2020transfer}
		Kiho Park and Mark Piraino, \emph{Transfer operators and limit laws for typical
			cocycles}, 2020.
		
		\bibitem{Sc74}
		H.~Schaefer, \emph{Banach lattices and positive operators}, Springer Verlag,
		1974.
		
		\bibitem{Shu87}
		M.~Shub, \emph{Global stability of dynamical systems}, Springer Verlag, 1987.
		
		\bibitem{Tall-Viana}
		El~Hadji~Yaya Tall and Marcelo Viana, \emph{Moduli of continuity for the
			{L}yapunov exponents of random {${\rm GL}(2)$}-cocycles}, Trans. Amer. Math.
		Soc. \textbf{373} (2020), no.~2, 1343--1383.
		
		\bibitem{Viana2008}
		Marcelo Viana, \emph{Almost all cocycles over any hyperbolic system have
			nonvanishing {L}yapunov exponents}, Ann. of Math. (2) \textbf{167} (2008),
		no.~2, 643--680.
		
	\end{thebibliography}
	\providecommand{\bysame}{\leavevmode\hbox to3em{\hrulefill}\thinspace}
	\providecommand{\MR}{\relax\ifhmode\unskip\space\fi MR }
	\providecommand{\MRhref}[2]{%
		\href{http://www.ams.org/mathscinet-getitem?mr=#1}{#2}
	}
	\providecommand{\href}[2]{#2}

\end{document}